\DeclareSymbolFont{cyrletters}{OT2}{wncyr}{m}{n}
\DeclareMathSymbol{\Sha}{\mathalpha}{cyrletters}{"58}
\DeclareMathOperator{\ssd}{\textnormal{ssd}}
\DeclareMathOperator{\gssd}{\textnormal{gssd}}
\DeclareMathOperator{\cd}{\textnormal{cd}}
\DeclareMathOperator{\subalg}{\textnormal{-subalgebras of }}
\DeclareMathOperator{\per}{per}
\DeclareMathOperator{\coker}{coker}
\DeclareMathOperator{\Spec}{Spec}
\DeclareMathOperator{\ind}{ind}
\DeclareMathOperator{\Frac}{Frac}
\DeclareMathOperator{\vect}{Vect}
\DeclareMathOperator{\GL}{GL}
\DeclareMathOperator{\tors}{\mathbf{-torsor}}
\DeclareMathOperator{\Mor}{Mor}
\DeclareMathOperator{\Aut}{Aut}
\DeclareMathOperator{\Uaut}{\underline{Aut}}
\DeclareMathOperator{\Galalg}{\mathbf{-Alg}_{Gal}}
\DeclareMathOperator{\red}{red}
\theoremstyle{plain}
\newtheorem{thm}{Theorem}[section]
\newtheorem{theorem}[thm]{Theorem}
\newtheorem*{theorem*}{Theorem}
\newtheorem{lemma}[thm]{Lemma}
\newtheorem*{lemma*}{Lemma}
\newtheorem{corollary}[thm]{Corollary}
\newtheorem{proposition}[thm]{Proposition}
\newtheorem*{proposition*}{Proposition}
\newtheorem*{conjecture*}{Conjecture}
\newtheorem*{question*}{Question}
\newtheorem*{fact*}{Fact}
\theoremstyle{definition}
\newtheorem{definition}[thm]{Definition}
\newtheorem*{conventions*}{Conventions}
\newtheorem*{acknowledgements*}{Acknowledgements}
\newtheorem{remark}[thm]{Remark}
\newcommand{\Z}{\mathbb{Z}}
\newcommand{\Q}{\mathbb{Q}}
\newcommand{\id}{id}
\newcommand{\coho}[1]{H^{i}(#1, \mu_l^{\otimes i - 1})}
\newcommand{\cohom}[2]{H^{i}(#1, \mu_{#2}^{\otimes i - 1})}
\newcommand{\nrcoho}[1]{H^i_{\textnormal{nr}}(#1, \mu_l^{\otimes i - 1})^{\mathcal{X}}}
\definecolor{c1}{RGB}{203, 75, 48}
\title{Patching for \'etale algebras and the period-index problem for higher degree Galois cohomology groups over Hensel semi-global fields}
\subjclass[2020]{11E72, 12G05, 12E30 (primary); 14B12, 14H25 (secondary)}
\keywords{Artin approximation, Brauer group, \'etale algebras, Galois cohomology, henselian rings, local-global principle, patching, period-index problem, semi-global fields}
\date{\today}
\begin{document}

\author{Yidi Wang}

\begin{abstract} In this manuscript, we present a partial generalization of the field patching technique initially proposed by Harbater-Hartmann to Hensel semi-global fields, i.e., function fields of curves over excellent henselian discretely valued fields. More specifically, we show that patching holds for \'etale algebras over such fields and a suitable set of overfields. Within this new framework, we further establish a local-global principle for higher degree Galois cohomology groups over Hensel semi-global fields. As an application, we extend a recent result regarding a uniform  period-index bound for higher degree Galois cohomology classes by Harbater-Hartmann-Krashen to Hensel semi-global fields. Additionally, we prove such a bound for coefficient groups of non-prime orders.
\end{abstract}

\maketitle

\section{Introduction}

Patching, which is inspired by the idea of ``cut and paste'' from topology and analysis, has been an effective tool in solving problems in Galois theory. The \emph{field patching} technique introduced by Harbater-Hartmann (\cite{HH10}) employs the idea that giving an algebraic structure over certain fields is equivalent to giving the structure over a suitable collection of overfields.
In \cite[Proposition 2.1]{HH10}, the authors gave criteria for patching to hold for vector spaces and showed that these hold for semi-global fields, i.e., function fields of curves over complete discretely valued fields. From this, one may deduce patching for other algebraic objects, e.g., commutative algebras, torsors, etc., by treating them as vector spaces with additional structure (see \cite[Theorem 7.1]{HH10} and \cite[Theorem 2.3]{HHK15}). Field patching simplifies the machinery in older forms of patching, e.g., formal and rigid patching, etc., and has found applications to various areas in arithmetic geometry such as local-global principles and the inverse problem in differential Galois theory in the past two decades. This raises the question of whether this technique can be extended to more general fields. 

In this manuscript, we study the field patching technique over \emph{Hensel semi-global fields}, i.e., function fields of curves over excellent henselian discretely valued fields. Though techniques that work for objects defined over complete rings can often be directly applied to objects defined over henselian rings, several difficulties present in our setting. 

First, we do not know what to expect for patching for vector spaces here. It is essentially more difficult to apply the criteria in \cite[Proposition 2.1]{HH10} to study whether patching holds for vector spaces over a Hensel semi-global field,
as the structure of henselian rings is very different. In a different perspective, field patching over semi-global fields proves many analogues of the results in formal patching for coherent sheaves of formal schemes. The proof of formal patching relies on Grothendieck's Existence Theorem, or Formal GAGA (GFGA).
However, Devadas showed that Henselian GAGA (GHGA) holds for finitely presented subsheaves of algebraizable sheaves on finitely presented and proper schemes over henselian rings of positive characteristic \cite[Theorem 4.3.1, Corollary 4.3.4]{Devadas23} but fails when the base ring is of equicharacteristic zero \cite[Example 4.4.1]{Devadas23} based on an example by de Jong (\cite{deJong19}); whether GHGA holds over mixed characteristic henselian rings remains an open question. Hence, in general, one cannot deduce patching for \'etale (Galois) covers as a consequence.

Despite the restrictions on the characteristic of the base ring in patching for coherent sheaves on henselian schemes, 
we prove that patching holds for torsors under finite groups over Hensel semi-global fields and a suitable set of overfields, called \emph{Hensel patches}, defined from the underlying geometry of a model of~$F$ \emph{without} any assumption on the characteristic of $F$. See ~\Cref{sec:patching} for the precise definitions.

\begin{theorem}[\Cref{patching_torsors}]\label{theorem1}
    Let~$F$ be a Hensel semi-global field,~$F_1, F_2$ be Hensel patches, and~$F_0$ be the Hensel branch field. Let $G$ be a finite group. The map
    \begin{equation*}
        \beta \colon G\tors(F) \to G\tors(F_1)\times_{G\tors(F_0)} G\tors(F_2)
    \end{equation*} given by base change is an equivalence of categories. In particular, when $G = S_n$, $\beta$ gives an equivalence of categories of \'etale algebras.
\end{theorem}

\noindent We prove \Cref{theorem1} by descending the patching result for \'etale algebras over semi-global fields to Hensel semi-global fields, using the Artin Approximation theorem \cite{Artin1969}. In fact, over Hensel semi-global fields, whether patching holds for vectors spaces remains unknown, even though patching for \'etale algebras holds.

We then apply patching for \'etale algebras to prove a period-index bound for higher degree Galois cohomology groups. The classical period-index problem relates the \emph{period} of a Brauer class, i.e., the order of a class in the Brauer group, to its \emph{index}, i.e., the degree of the minimal splitting field of a central simple algebra representing this class.
It is known that the index of a class always divides a power of its period. Moreover, for various classes of fields, there exists an integer~$d$ such that the index divides a~$d$-th power of the period for all classes. For example,~$d = 1$ when~$F$ is a global field;~$d = 2$ when~$F$ is a one-variable function field over~$\Q_p$ (see \cite{Saltman97}). Bounds concerning semi-global fields are also available in \cite{HHK09} and \cite{Lieblich11}.

In terms of Galois cohomology, the~$m$-torsion part of the Brauer group, i.e., the set of equivalence classes of central simple~$F$-algebras with period dividing~$m$, is identified with~$H^2(F, \mu_m)$. Following Kato (\cite{Kato1986}), one can then view~$\cohom{F}{m}$ as a generalization of the~$m$-torsion subgroup of the Brauer group and ask for a bound for splitting these higher cohomology classes. When~$i > 2$, this question becomes more mysterious as we lose most of the geometric interpretations like Severi-Brauer varieties, gerbes, etc., attached to the Brauer group. Motivated by Gosavi's work on the ``simultaneous index'' (\cite{Gosavi22}), Harbater-Hartmann-Krashen defined the \emph{generalized stable splitting dimension} as the minimal degree of a finite extension of~$E$ that simultaneously splits a finite collection of~$n$ classes in~$\coho{E}$  for all finite field extensions~$E/F$. Such a field invariant is denoted by~$\gssd_l^i(F)$ (by~$\ssd_l^i(F)$ if~$n = 1$). They used patching to bound~$\gssd_l^i(F)$ for a semi-global field~$F$ (\cite{HHK23}), when~$l$ is prime. Given the generalized patching result, we are able to relax their assumption on the field. In \Cref{sec_non_prime}, we also prove such a bound for~$\cohom{F}{m}$, where~$m$ is not necessarily a prime number.

\begin{theorem}[\Cref{main_theorem}, \Cref{general_theorem}]\label{theorem2}
    Let~$K$ be an excellent henselian discretely valued field with residue field~$k$. Let~$F$ be a one-dimensional function field over~$K$. Let~$l$ be a prime that is not the characteristic of~$k.$ Let~$k(x)$ denote the rational function field in one variable over~$k$. Then
    \begin{equation*}
        \gssd_l^i(F) \leq \gssd_l^i(k) + \gssd_l^i(k(x)) + 
        \begin{cases}
            2 \textnormal{ if } l \neq 2\\
            3 \textnormal{ if } l = 2
        \end{cases}
    \end{equation*}
    Let~$m = l_1^{e_1}\cdots l_k^{e_k}$, where each~$l_s$ is prime, pairwisely distinct and~$\operatorname{char}(F) \nmid m$. Then 
    \begin{equation*}
        \ssd_m^i(F) \leq \max_{1 \leq s \leq k} \{\ssd_{l_s}^i(F)\}.
    \end{equation*}
\end{theorem}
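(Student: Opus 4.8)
The plan is to treat the two inequalities by quite different mechanisms. The first, Theorem~\ref{main_theorem}, is obtained by transplanting the Harbater--Hartmann--Krashen argument for semi-global fields into the Hensel setting; the two new ingredients that license this are Theorem~\ref{patching_etale_henselian} (patching for \'etale algebras, which supplies the gluing of splitting fields) and the local--global principle for $\cohom{\,\cdot\,}{l}$ established earlier in the paper (which guarantees that a class split over every Hensel patch is split over $F$). The second, Theorem~\ref{period-index_non_prime}, is a purely formal d\'evissage reducing $\mu_m$-coefficients to $\mu_{l_s}$-coefficients one prime at a time.

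For the first inequality I first reduce to classes over $F$ itself: a finite extension $E/F$ is again a one-variable function field over an excellent henselian discretely valued field (the relative algebraic closure of $K$ in $E$), with residue field $k'$ finite over $k$; since every finite extension of $k'$, resp.\ of $k'(x)$, is one of $k$, resp.\ of $k(x)$, one has $\gssd_l^i(k')\le\gssd_l^i(k)$ and $\gssd_l^i(k'(x))\le\gssd_l^i(k(x))$, so it suffices to split a finite collection $\alpha_1,\dots,\alpha_n\in\cohom{F}{l}$. Choosing a regular projective model $\mathscr X\to\Spec\mathcal O_K$ with function field $F$ and blowing up, I may assume the union of the closed fiber with the ramification loci of the $\alpha_j$ has normal crossings. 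As in \cite{HHK22}, one then produces a single extension of $F$ of degree $l^{\epsilon_l}$, with $\epsilon_l=2$ for $l\ne 2$ and $\epsilon_l=3$ for $l=2$ (the extra contribution in the even case being the familiar $\mu_2$-versus-$\mu_4$ subtlety in the residue computation), after which all the $\alpha_j$ are unramified along the closed fiber, hence pulled back from the residue fields of the local rings attached to the Hensel patches. Here one uses that Kato's residue sequences and the Witt-type exact sequences valid over a complete discretely valued field remain valid over an excellent henselian one.

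The residual classes now live over one-variable function fields over $k$ (at the generic points of the components of the closed fiber) and over finite extensions of $k$ (at the closed points, and on the branch fields). Over the function-field patches they are split by extensions of degree dividing $l^{\gssd_l^i(k(x))}$, over the closed-point patches by extensions of degree dividing $l^{\gssd_l^i(k)}$, and by enlarging each local splitting field to its compositum with the neighbouring ones along the Hensel branch fields these choices can be made to agree on the overlaps, i.e.\ to define an object of $\etalg{F_1}\times_{\etalg{F_0}}\etalg{F_2}$ of rank dividing $l^{\gssd_l^i(k)+\gssd_l^i(k(x))}$. Applying the inverse of the equivalence $\beta$ of Theorem~\ref{patching_etale_henselian} yields a single \'etale $F$-algebra, and the local--global principle shows it splits all the $\alpha_j$ over $F$; combining with the preliminary degree-$l^{\epsilon_l}$ extension and passing to a field factor produces a splitting field of degree dividing $l^{\gssd_l^i(k)+\gssd_l^i(k(x))+\epsilon_l}$, and taking the supremum over $E$ and over the collections gives the claim. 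I expect the real work to be in this gluing step: one must verify that every appeal to patching in the Harbater--Hartmann--Krashen argument can be phrased for \'etale algebras (splitting fields) rather than for vector spaces or general torsors — Theorem~\ref{patching_etale_henselian} supplies only the former, and patching for vector spaces over Hensel semi-global fields is not available — and that the enlargements needed to reconcile the local splitting fields on the branches stay within the exponent budget above.

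For the second inequality, set $d:=\max_{1\le s\le k}\ssd_{l_s}^i(F)$ and let $\alpha\in\cohom{E}{m}$ for a finite extension $E/F$. Since $\operatorname{char}(F)\nmid m$, the decomposition $\mu_m\cong\prod_s\mu_{l_s^{e_s}}$ gives $\alpha=\sum_s\alpha_s$ with $\alpha_s\in\cohom{E}{l_s^{e_s}}$. I claim each $\alpha_s$ is split by an extension of $E$ of degree at most $l_s^{e_s d}$; granting this, the compositum of these extensions inside a fixed algebraic closure has degree at most $\prod_s l_s^{e_s d}\le\bigl(\prod_s l_s^{e_s}\bigr)^{d}=m^{d}$ and splits $\alpha$, and taking suprema gives $\ssd_m^i(F)\le d$. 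The claim follows by induction on $e_s$ via the coefficient sequence
\begin{equation*}
0\longrightarrow\mu_{l_s^{e_s-1}}^{\otimes i-1}\longrightarrow\mu_{l_s^{e_s}}^{\otimes i-1}\xrightarrow{\ \pi\ }\mu_{l_s}^{\otimes i-1}\longrightarrow 0 :
\end{equation*}
for $e_s=1$ this is just the definition of $\ssd_{l_s}^i(F)\le d$, while for $e_s>1$ one splits $\pi_*(\alpha_s)\in\cohom{E}{l_s}$ over $L_1/E$ with $[L_1:E]\le l_s^{d}$, so that $(\alpha_s)_{L_1}$ lies in the image of $\cohom{L_1}{l_s^{e_s-1}}$, and the inductive hypothesis then splits it over $L_2/L_1$ with $[L_2:L_1]\le l_s^{(e_s-1)d}$, giving $[L_2:E]\le l_s^{e_s d}$. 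This part is entirely routine — the tensor twist $i-1$ is unchanged throughout, so no auxiliary cyclotomic extension enters and there is no genuine obstacle.
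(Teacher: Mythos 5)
Your treatment of the first inequality (Theorem~\ref{main_theorem}) is essentially the paper's argument: reduce to classes over $F$ itself via the residue-field observation, invoke Gosavi's degree-$l^{\epsilon_l}$ extension to kill ramification, build local splitting fields over the $F_U^h$ and $F_P^h$ bounded by $\gssd_l^i(k(x))$ and $\gssd_l^i(k)$ respectively, glue them with Theorem~\ref{patching_etale_henselian} (via Lemmas~\ref{etale_big_patch} and \ref{etale_small_patch}), and finish with the local--global principle of Theorem~\ref{lgp_henselian}. This matches the paper's Proposition~\ref{split_unramified} and Theorem~\ref{main_theorem} in structure and in every essential step.

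The second part has a genuine gap, and it is not a technicality. By Definition, $\ssd_m^i(F)\le d$ means that for every finite $E/F$ and every $\alpha\in\cohom{E}{m}$ one has $\ind(\alpha)\mid\per(\alpha)^d$; the exponent is governed by the \emph{period} of $\alpha$, not by $m$. Your induction on $e_s$ tracks the exponent $e_s$ appearing in the prime factorization of $m$ and produces, for each component $\alpha_s\in\cohom{E}{l_s^{e_s}}$, a splitting field of degree dividing $l_s^{e_s d}$. Composing these yields a splitting field of degree dividing $m^d$, which establishes only $\ind(\alpha)\mid m^d$. When $\per(\alpha)$ is a proper divisor of $m$ (e.g.\ if some $\alpha_s$ has period $l_s^{b_s}$ with $b_s<e_s$, or is trivial), the bound $m^d$ is strictly larger than $\per(\alpha)^d$, so the required inequality $\ind(\alpha)\mid\per(\alpha)^d$ does not follow. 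The missing step is exactly the paper's Lemma~\ref{K-group}: if $\per(\alpha_s)=l_s^{b_s}$, one must first produce a preimage of $\alpha_s$ in $H^i(E,\mu_{l_s^{b_s}}^{\otimes i-1})$ so that the induction runs on $b_s$, not on $e_s$. That reduction is nontrivial; it amounts to showing that the natural map $H^i(F,\mu_{l^s}^{\otimes i-1})\to H^i(F,\mu_{l^n}^{\otimes i-1})$ is injective, which the paper deduces from the surjectivity of $H^{i-1}(F,\mu_{l^n}^{\otimes i-1})\to H^{i-1}(F,\mu_{l^s}^{\otimes i-1})$, itself a consequence of the norm--residue isomorphism theorem. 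Your d\'evissage never touches this point, so it cannot produce a period-indexed bound. You also need Lemma~\ref{res_cor} (period is preserved under prime-to-$l_s$ extensions) when passing up the tower $F\subset F_1\subset\cdots\subset F_k$, which is used silently in the final estimate.

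Secondary remark: the phrase ``taking suprema gives $\ssd_m^i(F)\le d$'' at the end of the second argument elides the passage to arbitrary finite extensions $E/F$; this is harmless to state (you already reduce to arbitrary $E$ at the start), but once the period-tracking is fixed the final line should compare $[F_k:E]$ to $\per(\alpha)^d$, not to $m^d$.
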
 \noindent In fact, the second part of the theorem holds over arbitrary fields. The approach for proving \Cref{theorem2} is to construct ``local splitting fields''  and then use the patching result for \'etale algebras to obtain a splitting field over~$F$. To check that a suitable subfield of the \'etale algebra constructed by patching is indeed a splitting field, we need a local-global principle for higher degree cohomology groups for which we prove in ~\Cref{sec_lgp}.

 The manuscript is organized as follows. \Cref{sec:artin} recalls henselian rings and the Artin Approximation theorem. In \Cref{sec:patching}, we introduce the field patching technique over Hensel semi-global fields. \Cref{sec_lgp} provides a local-global principle for higher degree cohomology groups. We then apply this principle to the generalized period-index bound in \Cref{sec:period-index}. In \Cref{sec_non_prime}, we extend the bound to coefficient groups of non-prime orders. 

\begin{acknowledgements*} The author was partially supported by the NSF grant DMS-2102987. This manuscript contains results from the author’s doctoral dissertation at the University of Pennsylvania. The author would like to thank David Harbater and Daniel Krashen for enlightening discussions and helpful comments, and Julia Hartmann for her patient advising and encouragement.
\end{acknowledgements*}

\section{Henselian rings and the Artin Approximation Theorem}\label{sec:artin}
The Artin Approximation theorem plays an important role in drawing connections between algebraic objects over an excellent henselian discrete valuation ring and its completion. In this section, we first recall the properties of henselian rings and  the Artin Approximation theorem (\cite{Artin1969}). Then we introduce a modified version of it to make it easier to apply to Hensel semi-global fields. Readers who are familiar with henselian rings and Artin Approximation may skip this section.

The classical study of henselian rings and henselization works primarily with local rings, but in this manuscript, sometimes we need a general treatment that works for any commutative rings, namely \emph{henselian pairs}. We mostly follow \cite{Elkik} and \cite{stacks-project}. A \emph{pair}~$(A, I)$ consists of a commutative ring~$A$ and an ideal~$I$. An \emph{\'etale neighborhood} of the pair~$(A, I)$ consists of a pair~$(A', I')$ and an \'etale ring map~$\alpha \colon A \to A'$ such that~$IA' = I'$ and~$A/I \cong A'/I'$. Geometrically, an \'etale neighborhood of~$(\Spec(A), V(I))$ is an affine \'etale map~$\Spec(A') \to \Spec(A)$ which is the identity over the closed subscheme~$V(I)$. A pair~$(A, I)$ is \emph{henselian} if for any \'etale neighborhood~$(A', I')$, there is an~$A$-algebra homomorphism~$A'\to A$ lifting the \'etale map~$\alpha$. Note that this definition coincides with the most natural definition of henselian ring, i.e., a ring that satisfies Hensel's Lemma. See \cite[Lemma 15.11.6]{stacks-project}. The \emph{henselization of a pair~$(A, I)$} is a ring extension~$\phi \colon (A, I) \to (A^h, IA^h)$, where~$(A^h, IA^h)$ is a henselian pair having the following universal property: for any henselian pair~$(B, J)$ with morphism~$\psi \colon (A, I)\to (B, J)$, there exists a unique morphism~$v \colon (A^h, IA^h) \to (B, J)$ such that~$\psi = v \circ \phi$. The henselization~$A^h$ of~$A$ can be constructed by taking the direct limit of all \'etale neighborhoods (see proof of \cite[Lemma 15.2.1]{stacks-project}). It is also called the henselization of~$A$ at~$I$.

\begin{theorem}\cite[Theorem 1.10]{Artin1969}\label{artin1}
    Let~$R$ be a field or an excellent discrete valuation ring, and and let~$A$ be the henselization of an~$R$-algebra of finite type at a prime ideal. Let~$\mathfrak{m}$ be a proper ideal of~$A$. Given an a system of polynomial equations
    \begin{equation*}
        f(Y_1, \cdots, Y_n) = 0
    \end{equation*} with coefficients in~$A$, a solution~$\bar{y} = (\bar{y}_1, \cdots, \bar{y}_n)$ in the~$\mathfrak{m}$-adic completion~$\widehat{A}$ of~$A$, and an integer~$c$, there exists a solution~$y = (y_1, \cdots, y_n) \in A$ with~$\bar{y}_i \equiv y_i \mod \mathfrak{m}^c$.
\end{theorem}

There is an alternative statement of Artin's Approximation Theorem, which is usually easier to apply in categorical settings.

\begin{theorem}\cite[Theorem 1.12]{Artin1969}\label{artin2}
     Let notation be as in \Cref{artin1}, and let
    \begin{equation*}
        F \colon \left(A\textnormal{-algebras}\right) \longrightarrow \left(\textnormal{Sets}\right),
    \end{equation*} be a functor sending filtered colimits to filtered colimits (or locally of finite presentation). Then given~$\hat{\xi} \in F(\widehat{A})$ and an integer $c$, there exists~$\xi \in F(A)$ such that~$\hat{\xi} \equiv \xi \mod \mathfrak{m}^c.$
\end{theorem}

We may simplify the hypothesis of the Artin Approximation Theorem to make it easier to apply to function fields. Let~$T$ be an excellent henselian discrete valuation ring and~$R$ be a~$T$-algebra of finite type. Let~$R^h$ denote the henselization of~$R$ at a prime ideal. Let~$\widehat{R}$ denote the completion of~$R^h$ at a proper ideal~$I$ of~$R^h.$ Suppose further that~$\widehat{R}$ is an integral domain. We may write~$\widehat{R}$ as a filtered direct system of~$R^h$-subalgebras~$\widehat{R} = \varinjlim_i A_i$, where each~$A_i$ is an~$R^h$-algebra of finite type. Moreover,~$A_i$ is an integral domain for all~$i$ since~$\widehat{R}$ is an integral domain. Note that since each~$A_i$ is a~$R^h$-subalgebra of~$\widehat{R}$, each map~$\varphi_{ij} \colon A_i \to A_j$ is an inclusion for~$i \leq j$.

\begin{proposition}\label{artin_modified}
    Let notation be as above. Let 
    \begin{equation*}
        F \colon (R^h\subalg \widehat{R}) \longrightarrow (\textnormal{Sets}),
    \end{equation*} be a functor such that~$F(\widehat{R}) = F(\varinjlim_i A_i) = \varinjlim_i(F(A_i))$. Then given~$\hat{\alpha} \in F(\widehat{R})$ and an integer $c$, there exists~$\alpha \in F(R^h)$ such that~$\alpha \equiv \hat{\alpha} \mod I^c$.
\end{proposition}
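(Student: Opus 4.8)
The plan is to imitate the proof of Theorem~\ref{artin2} (that is, of \cite[Theorem~1.12]{Artin1969}), checking that the weaker hypotheses of Proposition~\ref{artin_modified} — that $F$ be defined only on $R^h$-subalgebras of $\widehat{R}$, and that it convert only the \emph{single} filtered colimit $\widehat{R} = \varinjlim_i A_i$ into a filtered colimit of sets — already suffice. First I would use the colimit hypothesis: since $F(\widehat{R}) = \varinjlim_i F(A_i)$, the element $\hat{\alpha} \in F(\widehat{R})$ is represented by some $\alpha_i \in F(A_i)$, i.e.\ $\hat{\alpha} = F(\iota_i)(\alpha_i)$ where $\iota_i \colon A_i \hookrightarrow \widehat{R}$ is the inclusion. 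Fix such an index $i$. Because $A_i$ is an $R^h$-algebra of finite type, choose a presentation $A_i = R^h[Y_1, \dots, Y_m]/(f_1, \dots, f_n)$ with finitely many generators and relations; then the inclusion $\iota_i$ is encoded by a tuple $\underline{y} = (y_1, \dots, y_m) \in \widehat{R}^{\,m}$ solving the system $f_1 = \dots = f_n = 0$.

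Next I would invoke Artin Approximation at the level of rings for the pair $(R^h, I)$ — this is licensed exactly because $R^h$ is the henselization of a finite-type algebra over the excellent discrete valuation ring $T$ and $I$ is a proper ideal, matching the hypotheses of Theorem~\ref{artin2} (and underlying Theorem~\ref{artin1}). Taking $c = 1$, there is a tuple $\underline{y}' = (y_1', \dots, y_m') \in (R^h)^m$ with $f_1(\underline{y}') = \dots = f_n(\underline{y}') = 0$ and $y_j \equiv y_j' \pmod{I\widehat{R}}$ for all $j$. The tuple $\underline{y}'$ defines an $R^h$-algebra homomorphism $\psi \colon A_i \to R^h$, $Y_j \mapsto y_j'$; this is a morphism in the source category of $F$ (both $A_i$ and $R^h$ are $R^h$-subalgebras of $\widehat{R}$, with $R^h$-algebra homomorphisms as morphisms — and $\psi$ need not be injective, which does not matter). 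Set $\alpha := F(\psi)(\alpha_i) \in F(R^h)$.

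Finally I would compare $\alpha$ and $\hat{\alpha}$ in $F(k)$. Reducing modulo $I$, the two $R^h$-algebra homomorphisms
$A_i \xrightarrow{\iota_i} \widehat{R} \twoheadrightarrow \widehat{R}/I\widehat{R}$ and $A_i \xrightarrow{\psi} R^h \twoheadrightarrow R^h/I$
both send $Y_j$ to the common class of $y_j$ and $y_j'$ in $k$, so under the identification $\widehat{R}/I\widehat{R} \cong R^h/I = k$ they coincide as maps $A_i \to k$. Applying $F$ to this common map sends $\alpha_i$ simultaneously to the image of $\hat{\alpha}$ in $F(k)$ (factoring through $F(\iota_i)$) and to the image of $\alpha$ in $F(k)$ (factoring through $F(\psi)$); hence these two images agree, which is the assertion.

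The argument is in the end mostly bookkeeping, so the ``main obstacle'' is modest: the one genuine input is ring-level Artin Approximation for $(R^h, I)$, which is precisely where excellence of $T$ is used, and the point requiring care is that the approximating map $\psi$ and the reduction maps onto the quotient $k$ must lie in (or be reachable within) a category on which $F$ is defined. The first is handled by taking $R^h$-algebra homomorphisms as morphisms between subalgebras of $\widehat{R}$; the second is harmless in all intended applications, where $F$ is a functor on all $R^h$-algebras and one simply restricts it — so $F(k)$ and the maps to it are available. The conceptual content, such as it is, lies in observing that converting the single colimit $\widehat{R} = \varinjlim_i A_i$ is enough to run Artin's descent, so that one need not assume $F$ preserves all filtered colimits nor that it is defined beyond the subalgebras of $\widehat{R}$.
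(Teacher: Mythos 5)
Your proposal follows essentially the same route as the paper's proof: pick an index $i$ with $\hat{\alpha}$ coming from $F(A_i)$, present $A_i$ as a finite-type quotient of a polynomial ring over $R^h$, apply Artin approximation (Theorem~\ref{artin1}) to the defining system to produce a solution in $(R^h)^m$ congruent mod $I$ to the original one, and push $\alpha_i$ forward along the resulting $R^h$-algebra homomorphism $\psi \colon A_i \to R^h$, with the agreement in $F(k)$ coming from the congruence. One small point in your favor: the paper asserts that $\psi$ is injective, which is neither automatic (the approximate solution may satisfy extra relations) nor needed for the argument, and your version correctly drops the injectivity claim and instead just tracks that $\psi$ and the original inclusion induce the same map $A_i \to k$; you also flag explicitly the (harmless in applications) point that $F(k)$ and the reduction maps must be available to $F$, which the paper leaves implicit.
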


\begin{proof}
    The proof is similar to that of \cite[Theorem 1.12]{Artin1969}. For~$\hat{\alpha} \in F(\widehat{R})$, there exists~$i$ such that~$\hat{\alpha} \in F(A_i)$. Since~$A_i$ is a~$R^h$-algebra of finite type, we may write~$A_i = R^h[Y_1, \dots, Y_m]/(f_1, \dots, f_n)$, and there is an injective~$R^h$-algebra homomorphism~$A_i \hookrightarrow~\widehat{R}$ given by~$Y_j \mapsto y_j$,~$j = 1, \dots, m$, where~$(y_1, \dots, y_m)$ is a solution of the system of polynomial equations~$f_1, \dots, f_n$  induced by ~$\hat{\alpha}$ in~$\widehat{R}^m$. By \Cref{artin1}, for any integer $c$, there exists a solution~$(y_1', \dots, y_m')$ in~$(R^h)^m$ of the system of polynomial equations~$f_1, \dots, f_n$ such that $y_i \equiv y_i' \mod I^c$. This solution induces an injective homomorphism~$A_i \hookrightarrow R^h \colon Y_j \mapsto y_j'$ and this homomorphism further induces an element~$\alpha \in F(R^h)$ that satisfies $\alpha \equiv \hat{\alpha} \mod I^c$.
\end{proof}

There is also a version of generalized Artin Approximation Theorem that applies to Henselian pairs.

\begin{theorem}\cite[Theorem 1.1]{DH90}\label{theorem:artin_henselian_pair}
    Let~$(A, I)$ be a pair, where~$A$ is an excellent normal domain of dimension at most two and~$I$ is an ideal of~$A$. Let~$A^h$ and~$\widehat{A}$ denote the henselization and completion of the pair~$(A, I)$ (as in the beginning of \Cref{sec:artin}). Given an a system of polynomial equations
    \begin{equation*}
        f(Y_1, \cdots, Y_n) = 0
    \end{equation*} with coefficients in~$A^h$, a solution~$\bar{y} = (\bar{y}_1, \cdots, \bar{y}_n)$ in~$\widehat{A}$, and a non-negative integer~$c$, there exists a solution~$y = (y_1, \cdots, y_n) \in A$ with~$\bar{y}_i \equiv y_i \mod I^c$.
\end{theorem}

\begin{lemma}\label{linearly_disjoint}
    Let~$T$ be an excellent henselian discrete valuation ring. Let~$A$ be a~$T$-subalgebra of~$\widehat{T}$ of finite type that is an integral domain. Let~$K$ be the fraction field of~$T$. Let~$F$ be a field extension of~$K$ such that~$K$ is relatively algebraically closed in~$F$. Then~$F \otimes_{T} A$ is an integral domain, whence the total ring of fractions~$\operatorname{Frac}(F \otimes_{T} A)$ is a field.
\end{lemma}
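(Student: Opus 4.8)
The plan is to exhibit $F\otimes_T A$ as a subring of a tensor product of two \emph{fields} over $K=\Frac(T)$ and then apply the standard criterion for such a tensor product to be a domain. Since $T\subseteq K\subseteq F$, associativity of tensor products gives $F\otimes_T A=F\otimes_K(K\otimes_T A)$, and $K\otimes_T A=A[1/\pi]$, the localization of $A$ at the powers of a uniformizer $\pi$ of $T$; this is again an integral domain because $A$ is. Set $M=\Frac(A[1/\pi])=\Frac(A)$. Because $A\subseteq\widehat T$, the field $M$ is an intermediate field $K\subseteq M\subseteq\Frac(\widehat T)=\widehat K$. As $F$ is flat (indeed free) over $K$, the inclusion $A[1/\pi]\hookrightarrow M$ induces an injection $F\otimes_T A=F\otimes_K A[1/\pi]\hookrightarrow F\otimes_K M$, so it suffices to show that $F\otimes_K M$ is an integral domain.

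For this I would combine two facts about field extensions of $K$. First, since $T$ is an excellent discrete valuation ring its formal fibres are geometrically regular; applied to the generic point this says exactly that $\widehat K/K$ is geometrically regular, hence (being an extension of fields) geometrically reduced, i.e.\ separable, and therefore so is the intermediate extension $M/K$. Consequently $F\otimes_K M$ is a reduced ring, since a separable field extension of $K$ stays reduced after any field base change. Second, the hypothesis that $K$ is relatively algebraically closed in $F$ implies in particular that $K$ is separably algebraically closed in $F$, which is precisely the condition that $F/K$ be geometrically irreducible; hence $\Spec(F\otimes_K M)$ is irreducible (a defining property of geometrically irreducible extensions, applied to the field extension $M/K$). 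A ring that is both reduced and irreducible is an integral domain, so $F\otimes_K M$, and therefore its subring $F\otimes_T A$, is a domain; the total ring of fractions of a domain being its fraction field, $\Frac(F\otimes_T A)$ is a field.

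The base-change manipulations and the flatness that makes $F\otimes_T A\hookrightarrow F\otimes_K M$ injective are routine, as is the identification $\Frac(A)\subseteq\widehat K$. The substantive input — and the step I would be most careful to cite correctly — is the pair of facts that an excellent discrete valuation ring has a separable fraction-field extension to its completion (a standard property of excellent rings, cf.\ \cite{stacks-project}) and that a $K$-algebra which is both geometrically reduced and geometrically irreducible is an integral domain, together with the characterizations of these two notions in terms of separability and relative separable-algebraic-closedness (again \cite{stacks-project}). Note that the finite-type hypothesis on $A$ is not used in this argument — it is what will later allow the lemma to feed into the Artin approximation machinery of Proposition \ref{artin_modified} — and that in the special case where $F/K$ is separable the extension $F/K$ is regular and the conclusion is immediate, without any appeal to excellence.
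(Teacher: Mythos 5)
Your proof is correct and reaches the same conclusion, but it takes a genuinely different (and somewhat more structural) route than the paper. The paper also reduces to showing $L = F \otimes_K \operatorname{Frac}(A')$ is a domain, and it also invokes excellence (via Nagata, citing Matsumura) to get that $\widehat K/K$ is separable. But from there it proceeds concretely: since $\operatorname{Frac}(A')$ is a \emph{finitely generated} separable extension of $K$, Mac Lane's theorem gives a separating transcendence basis, $\operatorname{Frac}(A') = K(\underline x)(\underline\alpha)$ with the $\alpha_i$ separable algebraic, and then it factors $L = F(\underline x)\otimes_{K(\underline x)} K(\underline x)(\underline\alpha)$ and applies the linear-disjointness criterion of Kuhlmann (relative algebraic closedness of $K(\underline x)$ in $F(\underline x)$ gives linear disjointness from a separable algebraic extension, hence no zero divisors). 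Your proof instead phrases the two inputs at the level of geometric conditions on ring maps: separability of $M/K$ gives that $F\otimes_K M$ is \emph{reduced}, and relative algebraic closedness of $K$ in $F$ gives that $F/K$ is \emph{geometrically irreducible}, hence $F\otimes_K M$ is irreducible; reduced plus irreducible yields domain. The two arguments are close in spirit but not identical in mechanism — yours avoids choosing a transcendence basis and bypasses linear disjointness entirely. One concrete payoff of your version, which you point out correctly, is that it does not use the finite-type hypothesis on $A$: the paper's appeal to separable generation is exactly where finite generation of $\operatorname{Frac}(A')$ over $K$ enters, whereas geometric reducedness and geometric irreducibility are stable under arbitrary field base change with no finiteness assumption. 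Both approaches are valid; yours is a bit slicker and slightly more general, while the paper's is more elementary and self-contained relative to the references it cites.
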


\begin{proof}
    Let~$A' \coloneqq K \otimes_{T} A$, where~$K = \operatorname{Frac}(T)$. Then~$A'$ is a~$K$-subalgebra of~$\widehat{K} = \Frac(\widehat{T})$ of finite type. We must show that~$F \otimes_{T} A = F \otimes_K A'$ is an integral domain. Since~$F/K$ is flat,~$F \otimes_K A' \hookrightarrow F \otimes_K \operatorname{Frac}(A')$. Hence, it suffices to show that~$L \coloneqq F \otimes_K \operatorname{Frac}(A')$ is an integral domain. 

    First, by \cite[Theorem 78]{matsumura80},~$T$ is a Nagata ring. Moreover, the local Nagata domain~$T$ has the property that~$\Frac(\widehat{T})$ is separable over~$\Frac(T)$ by \cite[Theorem 71]{matsumura80}. Therefore, there exists a transcendence basis~$x_1, \dots, x_n$ over~$K$ such that~$$\Frac(A') = K(x_1, \dots, x_n)(\alpha_1, \dots, \alpha_m),$$ where each~$\alpha_i$ is separable and algebraic over~$K(x_1, \dots, x_n)$. For simplicity, write ~$\operatorname{Frac}(A') = K(\underline{x})(\underline{\alpha}).$ Therefore,
    \begin{equation*}
        L = F \otimes_K K(\underline{x})(\underline{\alpha}) = F \otimes_K K(\underline{x}) \otimes_{K(\underline{x})} K(\underline{x})(\underline{\alpha}) = F(\underline{x}) \otimes_{K(\underline{x})} K(\underline{x})(\underline{\alpha}).
    \end{equation*} Since~$K$ is relatively algebraically closed in~$F$,~$K(\underline{x})$ is relatively algebraically closed in~$F(\underline{x})$. By \cite[Lemma 24.13]{fvk},~$F(\underline{x})$ and~$K(\underline{x})(\underline{\alpha})$ are linearly disjoint, whence~$L$ does not have zero divisors. 
\end{proof}

\section{Patching over Hensel Semi-global fields}\label{sec:patching}

The field patching technique was invented by Harbater-Hartmann in \cite{HH10}, which later found more applications in arithmetic geometry like studying the period-index problem for Brauer groups and~$u$-invariants for quadratic forms. The idea is to show that giving an algebraic structure over a field is equivalent to giving the structure over a suitable collection of overfields. We elaborate below for the formal definition.

Let~$E \subseteq E_1, E_2 \subseteq E_0$ be fields. Let~$\mathcal{C}$ denote the category of certain algebraic objects over~$E$, e.g., vector spaces, torsors, associative algebras, etc. Let~$\mathcal{C}_i$ denote the category of algebraic objects of the same type as~$\mathcal{C}$ over~$E_i$. Let
\begin{equation*}
    \Phi \colon \mathcal{C} \longrightarrow \mathcal{C}_1 \times_{\mathcal{C}_0} \mathcal{C}_2
\end{equation*} be the natural map given by base change. Objects in~$\mathcal{C}_1 \times_{\mathcal{C}_0} \mathcal{C}_2$ are called \emph{patching problems}. These are triples~$(M_1, M_2; \phi)$, where each~$M_i \in \mathcal{C}_i$ and~$\phi \colon M_{1,E_0} \to M_{2,E_0}$ is an isomorphism over~$E_0$.  A \emph{morphism}~$\beta$ of patching problems~$(M_1, M_2;\phi)$ and~$(M_1', M_2';\phi')$ is a pair~$(\beta_1, \beta_2)$, where~$\beta_i \colon M_i \to M_i'$ satisfies~$\phi'\circ \beta_1 = \beta_2 \circ \phi$ for~$i = 1, 2$. We say \emph{patching holds over~$(E, E_1, E_2, E_0)$}  if the functor~$\Phi$ is an equivalence of categories. This implies that every patching problem~$(M_1, M_2;\phi)$ has a unique \emph{solution}, i.e., an algebraic object~$M \in \mathcal{C}$ that induces~$M_1, M_2$ and the isomorphism~$\phi$. 

In this section, we first recall the setup in \cite{HH10} for patching over \emph{semi-global fields}, and then introduce patching over \emph{Hensel semi-global fields}.

\subsection{Patching over semi-global fields}
Let~$K$ be a complete discretely valued field with valuation ring~$T$, uniformizer~$t$, and residue field~$k$. Let~$F$ be a one-variable function field over~$K$, in which~$K$ is algebraically closed. Such a field~$F$ is called a \emph{semi-global field}. A \emph{normal model}~$\mathcal{X}$ of~$F$ is an integral~$T$-scheme with function field~$F$ that is flat, projective and of relative dimension one over~$T$, and that is normal as a scheme. We say~$\mathcal{X}$ is \emph{regular} if it is regular as a scheme.
A regular model exists by the main theorem of \cite{Lipman78}. 

Let~$P$ be a point on the closed fibre~$X$ of~$\mathcal{X}$. Let~$\widehat{R}_P \coloneqq \widehat{\mathcal{O}}_{\mathcal{X}, P}$ be the complete local ring of~$\mathcal{X}$ at~$P$; and let~$F_P$ be the fraction field of~$\widehat{R}_P.$

Let~$U$ be a non-empty connected affine open subset of~$X$. Let~$R_U$ be the subring of~$F$ consisting of rational functions that are regular at each point of~$U.$ Let~$\widehat{R}_U$ be the~$t$-adic completion of~$R_U$. Let~$F_U$ be the fraction field of~$\widehat{R}_U.$

For~$P$ in the closure~$\bar{U}$ of~$U$, a \emph{branch} of~$X$ at~$P$ is a height-one prime ideal~$\wp$ of~$\widehat{R}_P$ that contains the uniformizer~$t$.  Let~$R_{\wp}$ be localization of~$\widehat{R}_P$ at~$\wp.$ Note that~$R_{\wp}$ is a discrete valuation ring. Now we take the~$t$-adic completion of~$R_{\wp}$, denoted by~$\widehat{R}_{\wp}$. Then~$\widehat{R}_U, \widehat{R}_P \subseteq \widehat{R}_{\wp}$ (see \cite[Section 6]{HH10}). Denote the fraction field of~$\widehat{R}_{\wp}$ by~$F_{\wp}$. It follows that~$F_P, F_U \subseteq F_{\wp}.$ The fields~$F_P$ and~$F_U$ are called \emph{patches}. The field~$F_{\wp}$ is called a \emph{branch field}.

Let~$E \subseteq E_1, E_2 \subseteq E_0$ be fields. Let~$\vect{(E_i)}$ denote the category of finite dimensional vector spaces over~$E_i$. Let
\begin{equation*}
    \Phi \colon \vect{(E)} \longrightarrow \vect{(E_1)} \times_{\vect{(E_0)}} \vect{(E_2)}
\end{equation*} be the natural map given by base change.  Objects in~$\vect{(E_1)} \times_{\vect{(E_0)}} \vect{(E_2)}$ are called \emph{patching problems}. These contain triples~$(V_1, V_2; \phi)$, where~$V_i$ is a finite dimensional~$E_i$-vector space and~$\phi \colon V_1 \otimes_{E_1} E_0 \to V_2 \otimes_{E_2} E_0$ is an isomorphism of~$E_0$-vector spaces. If~$\Phi$ is an equivalence of categories, then every patching problem~$(V_1, V_2;\phi)$ has a unique \emph{solution}, i.e., a~$E$-vector space~$V$ that induces~$V_1, V_2$ and~$\phi$. The following conditions for patching for vector spaces to hold were given in \cite[Proposition 2.1]{HH10}:
\begin{itemize}
    \item~$E = E_1 \cap E_2$.
    \item for any~$A_0 \in \GL_n(E_0)$, there exists~$A_i \in \GL_n(E_i)$ such that~$A_0 = A_1A_2$.
\end{itemize}

This was shown to hold over semi-global fields and patches.

\begin{theorem}(\cite[Theorem 6.4, Corollary 6.7]{HH10}) Let notation be as above.
    The following functor given by base change 
    \begin{equation*}
        \vect(F) \to \vect(F_P) \times_{\vect(F_{\wp})} \vect(F_U)
    \end{equation*}is an equivalence of categories.
\end{theorem}

\noindent This theorem also extends to other objects like commutative algebras, torsors, etc., by treating them as vector spaces with additional structures (see \cite[Theorem 7.1]{HH10} and \cite[Theorem 2.3]{HHK15}).

\subsection{Notation: Hensel semi-global fields and Hensel patches}\label{subsec:patching}

In this section, we introduce the notion of Hensel semi-global fields and prove a patching result for torsors under finite constant groups over such fields.

Let~$T$ be an excellent henselian discrete valuation ring with a uniformizer~$t$ (not necessarily complete). Let~$K$ be the fraction field of~$T$ and~$k$ be the residue field of~$T$. Such a field~$F$ is called a \emph{Hensel semi-global field}. Let~$F$ be a one-variable function field over~$K$ such that~$K$ is algebraically closed in~$F$. Let~$\widehat{T}$ denote the completion of~$T$ at the maximal ideal. Note that~$T$ and~$\widehat{T}$ have the same residue field~$k$. Let~$\widehat{K}$ denote the fraction field of~$\widehat{T}$. Then~$\widehat{K}$ is the completion of~$K$. Let~$\mathcal{X}$ be a normal model for~$F$. As discussed in Section~3.2 of \cite{HHKCPS19}, since~$T$ is henselian,~$\widehat{\mathcal{X}} \coloneqq \mathcal{X} \times_T \widehat{T}$ is also normal and has the same closed fibre~$X$ as~$\mathcal{X}$. Moreover, the function field~$\widehat{F}$ of~$\widehat{\mathcal{X}}$ is~$\textnormal{Frac}(F \otimes_K \widehat{K}).$ 

Let~$\mathcal{P} \subset X$ be a non-empty subset containing all the points at which distinct irreducible components of~$X$ meet. Thus the open complement~$X \backslash \mathcal{P}$ is a disjoint union of finitely many irreducible affine~$k$-curves~$U$. Let~$\mathcal{U}$ denote the collection of these open sets~$U$.

Let~$P \in \mathcal{P}$ and let~$R_P$ be the local ring of~$\mathcal{X}$ at~$P$. Let~$\mathfrak{m}_P$ be the maximal ideal of~$R_P$. Take the henselization of the pair~$(R_P, \mathfrak{m}_P)$, we obtain a henselian pair~$(R_P^h, \mathfrak{m}_PR_P^h)$. Let~$F_P^h$ be the fraction field of~$R_P^h.$ Similarly, let~$U\in \mathcal{U}$ and let~$R_U$ be the subring of~$F$ that consists of the rational functions that are regular along~$U.$ Let~$I$ be the ideal that defines~$U$ in~$\operatorname{Spec}(R_U).$ By \cite[Remark 3.1.1]{Harbater03},~$R_U$ is a Zariski ring. Then taking the henselization of~$(R_U, I)$, we obtain a henselian pair~$(R_U^h, IR_U^h)$ (see Section~\ref{sec:artin} for details). The fraction field of ~$R_U^h$ is denoted by~$F_U^h$. We call the fields~$F_P^h$ and~$F_U^h$ the \emph{Hensel patches}.

If~$P \in \bar{U}$, a \emph{branch} of~$X$ at~$P$ is a height-one prime ideal~$\wp$ of~$R_P^h$ that contains the uniformizer~$t$.  Let~$R_{\wp}$ be the localization of~$R_P^h$ at~$\wp.$ Note that~$R_{\wp}$ is a discrete valuation ring. Now we take the henselization of~$(R_{\wp}, \wp R_{\wp})$ and obtain a henselian ring~$R_{\wp}^h$. By construction,~$R_{P}^h \subseteq R_{\wp}^h$. To see that $R_U^h \subseteq R_{\wp}^h$, we note that the localization of $R_P$ and $R_U$ at the generic point $\eta$ of $U$ are the same, so by construction $(R_U)_{\eta} = (R_P)_{\eta} \subseteq R_{\wp}^h$. Therefore, $R_U \subseteq R_{\wp}^h$, so is $R_{U}^h$.
Denote the fraction field of~$R_{\wp}^h$ by~$F_{\wp}^h$. We call such a field a \emph{Hensel branch field}. It follows that~$F_P^h, F_U^h \subseteq F_{\wp}^h.$ Moreover, note that~$F_P^h$ is dense in~$F_{\wp}^h$. Indeed,~$F_P^h$ is contained in the fraction field of the completion of~$R_{\wp}$ at~$\wp$ and also has the~$\wp$-adic topology.

Now we make a few remarks about the relationship between semi-global patches and Hensel patches. The complete local rings of~$\mathcal{X}$ and~$\widehat{\mathcal{X}}$ at~$P \in X$ are isomorphic, so we may write~$\widehat{R}_P$ and thus~$F_P$ unambiguously for the patches. Similarly, for an affine open subset~$U$ of~$X$, we may also write~$\widehat{R}_U$ and~$F_U$ without ambiguity. Moreover, taking the completion of~$R_P^h$,~$R_U^h$ with respective to the ideals~$\mathfrak{m}_P$,~$IR_U^h$ recovers~$\widehat{R}_P$ and~$\widehat{R}_U$ respectively.

Though patching over semi-global fields and patches was first proven for vector spaces and then for other algebraic objects treated as vector spaces with additional structures, it is essentially hard to carry over the strategy over Hensel semi-global fields. Indeed, the proof for patching for vector spaces in \cite{HH10}, in particular for the factorization property,  adopts the argument of inductively quotienting by a power of the uniformizer~$t$ of the base ring~$\widehat{T}$ and then solving the entries over the patches. This argument relies heavily on the fact that the base ring~$\widehat{T}$ is complete and does not work over henselian rings. In the following section, we will use the Artin Approximation Theorem (\cite[Theorem 1.10]{Artin1969} and Proposition~\ref{artin_modified}) to descend the patching result for torsors under finite constant groups over semi-global fields to Hensel semi-global fields without going through patching for vector spaces.

Before proceeding, we first introduce \emph{intermediate patches} to bridge the gap between patches and Hensel patches.
Let~$A$ be a finite type~$T$-subalgebra of~$\widehat{T}$ and let~$\mathcal{X}_A \coloneqq \mathcal{X}\times_T A$. 
The function field of~$\mathcal{X}_A$ is~$F_A \coloneqq \Frac(F \otimes_T A)$. Then~$X$ embeds into the closed fibre of~$\mathcal{X}_A$. Indeed, there is a morphism~$\mathcal{X}_{\widehat{T}} \to \mathcal{X}$ with a section above the closed fibre~$X$. By composing this section with the morphism~$\mathcal{X}_{\widehat{T}} \to \mathcal{X}_A$ induced by~$A \hookrightarrow \widehat{T}$, we obtain a morphism that maps~$X$ into the closed fibre of~$\mathcal{X}_A$. 
Let~$P$ be a closed point on~$X$ and~$U$ a connected open subset of~$X$ such that~$P$ is contained in~$\bar{U}$. Let~$R_{A, U} = \{f \in F_A\mid f \textnormal{ is regular on } U\}$ and let~$I$ be the ideal defining~$U$. Let~$R_{A, P}^h \coloneqq \mathcal{O}_{\mathcal{X}_A, P}^h$ and let~$R_{A, U}^h$ be obtained by taking the henselization of~$(R_{A,U}, I)$. Let~$F_{A, P}^h, F_{A, U}^h$ be their fraction fields, respectively. Let~$\wp$ be a height-one prime ideal of~$R_{A, P}^h$ that contains the uniformizer~$t$. Let~$R_{A, \wp}^h$ be henselization of the localization of~$R_P^h$ at~$\wp$, and let~$F_{A, \wp}^h$ denote its fraction field. 
Note that when~$A = T$,~$R_{A, P}^h = R_P^h$,~$R_{A, U}^h = R_U^h$, and~$R_{A, \wp}^h = R_{\wp}^h$. Moreover, when~$A = \widehat{T}$, if we complete~$R_{\widehat{T}, P}^h$,~$R_{\widehat{T}, U}^h$,~$R_{\widehat{T}, \wp}^h$ at the ideals that define~$P$,~$U$, and~$\wp$ respectively, we can recover the patches~$\widehat{R}_P$,~$\widehat{R}_U$ and the branch~$\widehat{R}_{\wp}$.

\begin{remark}\label{normal_patch}
    \begin{enumerate}
        \item Note that the closed fibre of~$\mathcal{X}_A$ is in general not~$X$. For example, take~$A = T[y]$, where~$y\in \widehat{T}$ is transcendental over~$T$ and then the closed fibre of~$\mathcal{X}_A$ is the~$y$-line over~$X$.
        \item The rings~$R_{\widehat{T},P}$ and~$R_{\widehat{T},U}$ are normal. Indeed, since~$\mathcal{X}\times \widehat{T}$ is normal, by definition, local rings at a closed point~$R_{\widehat{T},P}$ is normal; moreover, the maximal ideals of~$R_{\widehat{T},U}$ corresponds to closed points of~$U$, so local rings of~$R_{\widehat{T}, U}$ at those closed points are normal, whence~$R_{\widehat{T},U}$ is normal.
    \end{enumerate}
\end{remark}

\subsection{Patching for torsors under finite constant group}
Let~$(E, E_1, E_2, E_0)$ be a quadruple of fields and let~$G$ be a linear algebraic group over~$E$. A patching problem for~$G$-torsors asks when the map
\begin{equation*}
    \beta \colon G\tors(E) \to G\tors(E_1)\times_{G\tors(E_0)} G\tors(E_2)
\end{equation*} given by base change is an equivalence of categories. Here,
$G\tors(E)$ denotes the category of~$G$-torsors over~$E$. \cite[Theorem 2.3]{HHK15} shows that any patching problem for torsors has a solution over~$(\widehat{F}, F_P, F_U, F_{\wp})$. We will extend this result to Hensel semi-global fields and Hensel patches in the case of finite and constant group~$G$.

\begin{theorem}\label{patching_torsors} 
 The functor~$\beta$ defined above is an equivalence of categories over the quadruple of fields~$(F, F_P^h, F_U^h, F_{\wp}^h)$ when~$G$ is a finite constant group over~$F$.
\end{theorem}

\noindent Before proving \Cref{patching_torsors}, we first prove a compatibility result.

\begin{proposition}\label{compatibility_torsor} Let notation be as in \Cref{subsec:patching}.
     Let~$G$ be a linear algebraic group over a Hensel semi-global field~$F$. For any~$\xi \in \mathcal{P} \cup \mathcal{U}$, the natural map
     \begin{equation*}
         i \colon H^1(F_{\widehat{T},\xi}^h, G) \to H^1(F_{\xi}, G)
     \end{equation*} is injective. 
\end{proposition}

\begin{proof}
    We first show that~$i$ has trivial kernel. 
    Note that~$H^1(F_{\widehat{T}, \xi}^h, G)$ classifies isomorphism classes of~$G$-torsors over~$F_{\widehat{T}, \xi}^h$, and the natural map~$i$ is given by base change. 
    Suppose the~$G$-torsor~$Z$ becomes trivial after base changing to~$F_{\xi}$, so~$Z(F_{\xi}) \neq \emptyset$. Suppose~$Z$ is defined by a system of polynomials~$f_1, \cdots, f_n \in F_{\widehat{T},\xi}^h[x_1, \cdots, x_m]$. 
    By assumption there exists~$(a_1/b_1, \cdots, a_m/b_m) \in F_{\xi}^m$ such that~$f_s(a_1/b_1, \cdots, a_m/b_m) = 0$ for all~$s$, where~$a_j, b_j \in \widehat{R}_{\xi}$. 
    We can clear the denominators in~$f_s$ by multiplying by~$b_1^{e_1} \cdots b_m^{e_m}$ for some powers~$e_j$ so that~$g_s = y_1^{e_1} \cdots y_m^{e_m}f_s \in R_{\widehat{T},\xi}^h[y_1, \cdots, y_m, x_1, \cdots, x_m]$ has a solution~$(b_1, \cdots, b_m, a_1, \cdots, a_m) \in \widehat{R}_{\xi}^{2m}$. 
    Since~$R_{\widehat{T},\xi}$ is normal (see part 2 of \Cref{normal_patch}), by \Cref{theorem:artin_henselian_pair}, there exists~$(b_1', \cdots, b_m', a_1', \cdots, a_m') \in (R_{\widehat{T}, \xi}^h)^{2m}$ that is a solution to the system of polynomial equations~$g_1 = 0, \dots, g_n = 0$. 
    Thus,~$(a_1'/b_1', \cdots, a_m'/b_m')$ is a solution to~$f_s = 0$ for all~$s$. Hence,~$Z(F_{\widehat{T},\xi}^h) \neq \emptyset$, so $Z$ is a trivial $G$-torsor over $F_{\widehat{T},\xi}^h$. 

    Using a twist argument, we conclude that $i$ is indeed injective: suppose~$\alpha, \beta \in H^1(F_{\widehat{T},\xi}^h, G)$ such that~$i(\alpha) = i(\beta) \in H^1(F_{\xi}, G)$. Let~$\tau \in Z^1(F_{\widehat{T},\xi}^h, G)$ be a 1-cocycle that represents~$\alpha$ and let~$G^{\tau}$ be the twist of~$G$ by~$\tau$. By \cite[Proposition I.5.3.35]{serre97}, there is a bijection between~$H^1(F_{\widehat{T},\xi}^h, G)$ and~$H^1(F_{\widehat{T},\xi}^h, G^{\tau})$, where~$\tau \in H^1(F_{\widehat{T},\xi}^h, G)$ corresponds the neutral element of~$H^1(F_{\widehat{T},\xi}^h, G^{\tau})$. Then it suffices to show that the map $i' \colon H^1(F_{\widehat{T},\xi}^h, G^{\tau}) \to H^1(F_{\xi},G^{\tau})$ has trivial kernel. This was done in the previous paragraph.
\end{proof}

\begin{remark} Note that the above proposition remains true if we replace~$F_{\widehat{T},\xi}^h$ by~$F_{\xi}^h$, since the completion of~$R_{T,\xi}$ is also~$\widehat{R}_{\xi}$.
\end{remark}

\begin{proposition}\label{prop:morphism}
    Let~$E$ be a field and let~$L$ be a field extension of~$E$. Let~$G$ be a linear algebraic group over~$E$, and let~$X_1, X_2$ be~$G$-torsors over~$E$ that are isomorphic. Then the natural map~$i \colon \Mor_E(X_1, X_2) \to \Mor_{L}(X_{1, L}, X_{2,L})$ is injective. The map~$i$ is also surjective if~$G$ is finite and constant.
\end{proposition}

\begin{proof}
    The automorphism group (as $G$-torsors)~$\Uaut(X_i)$ is represented by a linear algebraic group~$H_i$ that is a twisted form of~$G$. 
    Therefore,~$\Aut_E(X_i)  \to \Aut_L(X_{i,L})$ is just~$H_i(E) \hookrightarrow H_i(L)$, which is injective. If~$G$ is finite and constant, then~$H_i(L) = H_i(E)$, so~$\Aut_E(X_i)  \to \Aut_L(X_{i,L})$ is also bijective.
    
    Let~$\phi_1, \phi_2 \in \Mor_E(X_1, X_2)$. Suppose~$i(\phi_1) = i(\phi_2)$. Then~$\phi_1 \circ \phi_2^{-1}, \id \in \Aut_E(X_2)$ have the same image under~$i$, so~$\phi_2 = \phi_1$. Therefore,~$i$ is injective. For the surjectivity, we  consider~$\Mor_L(X_{1,L}, X_{2,L})$ as a (left)~$H_1$-torsor. For any~$\phi \in \Mor_L(X_{1,L}, X_{2,L})$,  we may take~$\psi \in \Mor_E(X_1, X_2)$ and find~$h\in H_1$ such that~$i(\psi) = h.\phi$. Therefore,~$i(h^{-1}.\psi) = \phi$.
\end{proof}

\subsection{Proof of Theorem \ref{patching_torsors}}
Now we are ready to prove \Cref{patching_torsors}.
\begin{proof}[Proof of Theorem~\ref{patching_torsors}] Consider the patching problem for $G$-torsors $(X_P, X_U;\phi)$ over $(F, F_P^h, F_U^h, F_{\wp}^h)$.
    We show that there exists a~$G$-torsor~$X$ that induces~$X_P$ and~$X_U$, and the map~$\phi$.

    We define a functor 
    \begin{align*}
        \mathcal{F} \colon (T\subalg \widehat{T}) &\longrightarrow (\textnormal{Sets}),\\
        A &\longmapsto \mathcal{F}(A)
    \end{align*} where~$\mathcal{F}(A) = \{ G_{F_A} \textnormal{ -torsors } X_A \mid X_A \textnormal{ satisfies conditions (\ref{eq1}}), (\ref{eq2}), (\ref{eq3}) \textnormal{ defined below}\}$, and for any ring homomorphism 
   ~$f \colon A \to A'$,~$\mathcal{F}(f)(X_A) \coloneqq X_A \times_{F_A} F_{A'}$.

\begin{equation}\label{eq1}
    \exists \varphi_P \colon X_A \times_{F_A} F_{A, P}^h \overset{\cong}{\longrightarrow} X_P \times_{F_P^h} F_{A, P}^h,
\end{equation}
\begin{equation}\label{eq2}
    \exists \varphi_{U} \colon X_A \times_{F_A} F_{A, U}^h \overset{\cong}{\longrightarrow} X_U \times_{F_U^h} F_{A, U}^h,
\end{equation}
\begin{equation}\label{eq3}
    \begin{tikzcd}
        &X_A \times_{F_A} F_{A, \wp}^h \arrow{r}{\varphi_P \times_1} \arrow{d}{\cong} & X_P \times_{F_P^h} F_{A, P}^h \times_{F_{A, P}^h} F_{A, \wp}^h  \arrow{r}{=} \arrow[d, phantom, "\circlearrowright"]{=} &X_P \times_{F_P^h} F_{\wp}^h \times_{F_{\wp}^h} F_{A, \wp}^h \arrow{d}{\phi \times 1}\\
        &X_A \times_{F_A} F_{A, \wp}^h \arrow{r}{\varphi_U \times_1} &X_U \times_{F_U^h} F_{A, U}^h \times_{F_{A, U}^h} F_{A, \wp}^h \arrow{r}{=} &X_U \times_{F_U^h} F_{\wp}^h \times_{F_{\wp}^h} F_{A, \wp}^h
    \end{tikzcd} 
\end{equation}
\noindent One may check that~$\mathcal{F}$ defines a functor.

Now we check that~$\mathcal{F}$ is locally of finite presentation. 
It suffices to check that for any~$B = \varinjlim_i A_i$, where~$B$ is a finite type~$T$-subalgebra of~$\widehat{T}$,~$\mathcal{F}(B) = \varinjlim_i \mathcal{F}(A_i)$ by \Cref{artin_modified}. 
By \Cref{linearly_disjoint},~$F_{A_i}$ a field for each~$i$. It is easy to check that~$F_B = \varinjlim_i F_{A_i}$.
The map~$\phi_{i} \colon A_i \rightarrow B$ induces~$\mathcal{F}(\phi_{i}) \colon \mathcal{F}(A_i) \rightarrow \mathcal{F}(B)$. Let~$\Phi \coloneqq \varinjlim_i \mathcal{F}(\phi_i)$.
Take~$X_{B} \in \mathcal{F}(B)$. Since~$X_{B}$ is an affine variety,  it is defined by finitely many polynomials~$f_1, \dots, f_m \in F_B[x_1, \dots, x_n]$. Since~$F_B = \varinjlim_i F_{A_i}$, there exists~$i$ such that~$f_s \in F_{A_i}[x_1, \dots, x_n] \subseteq F_{A_i}[x_1, \dots, x_n]$ for all~$s$. Then the~$F_{A_i}$-variety~$\Spec(F_{A_i}[x_1, \dots, x_n]/(f_1, \dots, f_s))$ is isomorphic to~$X_B$ after base changing to~$F_B$. Therefore,~$\Phi$ is surjective. Let~$\psi_i \colon \mathcal{F}(A_i) \to \mathcal{F}(B)$. If~$\Phi(Y) = \Phi(Z)$, where~$Y, Z \in \varinjlim_i \mathcal{F}(A_i)$, then there exists~$i, j$ and such that~$Z_i \in \mathcal{F}(A_i), Y_j \in \mathcal{F}(A_j)$,~$\psi_i(Z_i) = Z, \psi_j(Y_j) = Y$ and~$Z_i \times_{F_{A_i}} F_B \cong Y_j \times_{F_{A_j}} F_B$. 
Therefore, there must exists an index~$k$ such that~$Z_i$ and~$Y_j$ are isomorphic after base changing to~$F_{A_k}$, whence~$Z$ and~$Y$ are isomorphic. 
Then~$\Phi$ is injective.

The isomorphism ~$\varphi_P$ is explicitly given by polynomials over~$F^h_{\widehat{T},P}$. We may clear the denominators and assume they are defined over~$R^h_{\widehat{T}, P}$, which is the direct limit of the coordinate rings of \'etale neighborhoods of~$P$ on~$\mathcal{X}_{\widehat{T}}$. Therefore, the coefficients must lie in the coordinate ring of some \'etale neighborhood of~$P$, which is a finite type~$\widehat{T}$-algebra. Then the coordinate ring must be defined by polynomials with coefficients in~$A_j$ for some~$j$, whence the isomorphism~$\varphi_P$ is defined by polynomials with coefficients in~$F_{A_j}$. Similarly, we can show that~$\varphi_U$ is given by polynomials with coefficients in~$F_{A_k}$ for some~$k$. Then (\ref{eq3}) is also automatically defined by algebraic relations. Hence, we have shown that~$\mathcal{F}$ meets the requirement of Proposition~\ref{artin_modified}.

To show that~$\mathcal{F}(\widehat{T}) \neq \emptyset$, we must show that the patching problem~$\mathcal{P}^h \coloneqq (X_{P,F_{\widehat{T},P}^h}, X_{U,F_{\widehat{T},U}^h}; \phi \times F_{\widehat{T},\wp}^h)$ of~$G$-torsors over~$(\widehat{F}, F_{\widehat{T}, P}^h, F_{\widehat{T}, U}^h, F_{\widehat{T}, \wp}^h)$ has a solution. 
Note that this patching problem induces a patching problem~$\mathcal{P}^{\wedge}\coloneqq (X_{P,F_P}, X_{U,F_U}; \phi \times F_{\wp})$ over~$(\widehat{F}, F_P, F_U, F_{\wp})$, which has a unique solution ~$X'$ by \cite[Theorem 2.3]{HHK15}. 

We now show that~$X'$ must be a solution to the patching problem~$\mathcal{P}^h$. Let $X_{\xi}'\coloneqq X'\times_{\widehat{F}} F_{\widehat{T}, \xi}^h$, where $\xi \in \{P, U\}$. On the level of objects, the~$G_{F_{\widehat{T},\xi}^h}$-torsors~$X_{\xi} \times_{F_{\xi}^h} F_{\widehat{T}, \xi}^h$ and~$X_{\xi}'$ both induce the~$G_{F_{\xi}}$-torsor~$X_{\xi,F_{\xi}}$. 
By \Cref{compatibility_torsor},~$X_{\xi} \times_{F_{\xi}^h} F_{\widehat{T}, {\xi}}^h \cong X_{\xi}'$ as~$G_{F_{\widehat{T}, {\xi}}^h}$-torsors.  
Now we check that~$X'$ is also compatible with the given morphism ~$\phi \times F_{\widehat{T},\wp}^h$. 
Note that~$X'$ is also a solution to the patching problem~$(\mathcal{P}^{h})'\coloneqq (X_P', X_U'; \textnormal{id})$. Therefore, by \cite[Theorem 2.3]{HHK15}, over~$(\widehat{F}, F_P, F_U, F_{\wp})$, the patching problems~$(\mathcal{P}^{\wedge})'\coloneqq (X_{P,F_P}', X_{U,F_U}';\textnormal{id})$ and~$\mathcal{P}^{\wedge}$ are isomorphic, so there exist isomorphisms~$\phi_{\xi} \colon X_{\xi,F_{\xi}}' \to X_{\xi,F_{\xi}}$, where $\xi \in \{P,U\}$ such that~$\phi \times F_{\wp} = \phi_P^{-1} \circ \phi_U$. Moreover, by \Cref{prop:morphism},~$\phi_{\xi}$ defines an isomorphism over~$F_{\widehat{T},\xi}^h$. 
Hence, the patching problems $\mathcal{P}^h$ and $(\mathcal{P}^h)'$ are isomorphic, so~$X'$ is also a solution to ~$\mathcal{P}^h$, whence~$\mathcal{F}(\widehat{T}) \neq 0$, so~$\mathcal{F}(T) \neq 0$ by \Cref{artin_modified}. Hence, there exists a~$G_F$-torsor~$X$ that induces~$X_P$,~$X_U$ and~$\phi$. 
Since by our construction,~$X\times_F \widehat{F}$ must also be a solution to~$\mathcal{P}^{\wedge}$, which has a unique solution~$X'$ as explained above, so $X \times_F \widehat{F} \cong X'$.

It remains to show that the functor $\beta$ is fully faithful. Let~$G$-torsors~$X$,~$X'$ be solutions to the patching problems~$\mathcal{P}$ and~$\mathcal{P}'$ over~$(F, F_P^h, F_U^h, F_{\wp}^h)$. Let~$\widehat{\mathcal{P}}$ and~$\widehat{\mathcal{P'}}$ denote the induced patching problems over~$(\widehat{F}, F_P, F_U, F_{\wp})$. By \cite[Theorem 2.3]{HHK15}, there natural map~$\Mor(X \times_F \widehat{F}, X' \times_F \widehat{F}) \to \Mor(\widehat{\mathcal{P}}, \widehat{\mathcal{P'}})$ is bijective. 
By \Cref{prop:morphism}, this also implies the natural map~$\Mor(X, X') \to \Mor(\mathcal{P}, \mathcal{P}')$ is bijective. 
\end{proof}

Hence, we obtain the following patching result as an immediate corollary.
\begin{corollary} \label{cor:patching}The following functor given by base change
    \begin{equation*}
    \beta \colon G\Galalg(F) \longrightarrow G\Galalg(F_P^h) \times_{G\Galalg(F_{\wp}^h)} G\Galalg(F_U^h)
\end{equation*} is an equivalent of categories. In particular, when~$G = S_n$, the assertion holds for the category of \'etale algebras of degree~$n$.
\end{corollary}
\begin{proof} Since the following composition of maps 
    \begin{equation*}
        \mathcal{F} \colon G\tors(F) \to H^1(F, G) \to G\Galalg(F)
    \end{equation*} is a natural bijection, the assertion follows from \Cref{patching_torsors}.
    When~$G = S_n$,~$H^1(F, S_n)$ classifies \'etale~$F$-algebras of degree~$n$, so the assertion follows.
\end{proof}

\begin{remark} 
    In the case of $S_n$-torsors, viewed as \'etale algebras, \Cref{cor:patching} can also be obtained using a geometric approach by considering the spectrum of an \'etale algebra over ~$F$ as the generic fibre of some branched cover of the given model $\mathcal{X}$. See the author's Ph.D. thesis (\cite[Section 3.4.2]{wang24}).  
\end{remark}

\subsection{Some preparatory results from patching}

In this section, we prove a few refined results about patching which will be needed in \Cref{sec:period-index}. 

Let $F$ be a Hensel semi-global field and let $\mathcal{X}$ be a model of $F$ with closed fibre $X$ as in the beginning of \Cref{subsec:patching}. Let $X^{\red}$ denote the reduced closed fibre of $\mathcal{X}$. 
Let~$\mathcal{P} \subseteq X^{\red}$ be the set of all singular points of ~$X^{\red}$. Let~$\mathcal{U}$ be the set of connected components of the complement~$X^{\red}\backslash \mathcal{P}$.

\begin{lemma}\label{etale_branch}
    Let~$P$ be a closed point of the closed fibre of~$\mathcal{X}$ and let~$\wp_i$ denote the branches of~$X$ at~$P$. For each~$i$, suppose that~$E_{\wp_{i}}^h$ is a degree~$d$ {\'e}tale~$F_{\wp_i}^h$-algebra. Then there exists an {\'e}tale~$F_P^h$-algebra~$E_P^h$ of degree~$d$ such that~$E_{\wp_i}^h \cong E_{P}^h \otimes_{F_P^h} F_{\wp_i}^h$ for all~$i$.
\end{lemma}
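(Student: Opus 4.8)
The plan is to follow the strategy of the proof of Theorem~\ref{patching_etale_henselian}: first solve the problem over the complete local fields, and then descend to the Hensel patch at $P$ by Artin approximation. Write $F_P=\Frac(\widehat{R}_P)$ and $F_{\wp_i}=\Frac(\widehat{R}_{\wp_i})$ for the completions of $F_P^h$ and of the $F_{\wp_i}^h$, and set $E_{\wp_i}\coloneqq E_{\wp_i}^h\otimes_{F_{\wp_i}^h}F_{\wp_i}$; this is again a degree $d$ \'etale $F_{\wp_i}$-algebra, with the same multiset of factor degrees, by Lemma~\ref{separable_ext_henselian}. The essential input is the analogue of the lemma over the complete fields: there is a degree $d$ \'etale $F_P$-algebra $E_P$ with $E_P\otimes_{F_P}F_{\wp_i}\cong E_{\wp_i}$ for every $i$. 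This is the statement that arbitrary degree $d$ branch data at a point can be realized by an \'etale algebra over $F_P$; it is part of the field patching package over semi-global fields of \cite{HH10} (see also \cite{HHK15}), and otherwise can be extracted from the simultaneous factorization results proved there.

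For the descent, imitate the functor used in the proof of Theorem~\ref{patching_etale_henselian}, now localized at $P$. One applies Proposition~\ref{artin_modified} with $R^h=R_P^h$ and $\widehat{R}=\widehat{R}_P$, noting that $\widehat{R}_P$ is a (complete, normal) domain. Define
\[
\mathcal G\colon(R_P^h\subalg\widehat{R}_P)\longrightarrow(\Set),\qquad A\longmapsto\mathcal G(A),
\]
where $\mathcal G(A)$ is the set of pairs $(L_A,(\psi_i)_i)$ consisting of a degree $d$ \'etale $F_A$-algebra $L_A$, with $F_A\coloneqq\Frac(A)$ a field since $A$ is a domain, together with, for each branch $\wp_i$, an isomorphism $\psi_i\colon L_A\otimes_{F_A}F_{A,\wp_i}^h\xrightarrow{\cong}E_{\wp_i}^h\otimes_{F_{\wp_i}^h}F_{A,\wp_i}^h$ of \'etale algebras; here the branch fields $F_{A,\wp_i}^h$ are constructed from $\Spec(A)$ in the same way the fields $F_{A,\wp}^h$ were constructed from $\mathcal{X}_A$ in that proof. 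Since a degree $d$ \'etale algebra together with finitely many isomorphisms of \'etale algebras is cut out by polynomial data, the argument in the proof of Theorem~\ref{patching_etale_henselian} applies to show that $\mathcal G$ sends filtered colimits to filtered colimits.

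Next one checks $\mathcal G(\widehat{R}_P)\neq\emptyset$ using $E_P$. For each $i$, the field $F_{\widehat{R}_P,\wp_i}^h$ is the fraction field of a henselian discrete valuation ring whose completion is $\widehat{R}_{\wp_i}$; the two \'etale $F_{\widehat{R}_P,\wp_i}^h$-algebras $E_P\otimes_{F_P}F_{\widehat{R}_P,\wp_i}^h$ and $E_{\wp_i}^h\otimes_{F_{\wp_i}^h}F_{\widehat{R}_P,\wp_i}^h$ both base change to $E_{\wp_i}$ over $F_{\wp_i}$, so they are isomorphic already over $F_{\widehat{R}_P,\wp_i}^h$ by Proposition~\ref{etale_iso_henselian}, which supplies the isomorphisms $\psi_i$. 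Hence $\mathcal G(\widehat{R}_P)\neq\emptyset$, so by Proposition~\ref{artin_modified} we get $\mathcal G(R_P^h)\neq\emptyset$. Since $F_{R_P^h,\wp_i}^h=F_{\wp_i}^h$, an element of $\mathcal G(R_P^h)$ is exactly a degree $d$ \'etale $F_P^h$-algebra $E_P^h$ equipped with isomorphisms $E_P^h\otimes_{F_P^h}F_{\wp_i}^h\cong E_{\wp_i}^h$ for all $i$, which proves the lemma.

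The step I expect to be the main obstacle is the complete-field input. In contrast to gluing a patch at $P$ and a patch on an open $U$ along a single branch, realizing prescribed branch data at a point is not formal: one cannot just approximate the defining polynomials of the $E_{\wp_i}^h$ simultaneously in the several $\wp_i$-adic topologies, because the image of $F_P^h$ in $\prod_i\widehat{R}_{\wp_i}$ is constrained at $P$ (simultaneous density fails). A secondary point that must be handled with care, exactly as in the proof of Theorem~\ref{patching_etale_henselian}, is the precise definition of the branch fields $F_{A,\wp_i}^h$ over the auxiliary rings $A$ and the verification that $\mathcal G$ is locally of finite presentation.
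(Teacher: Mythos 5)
Your route is genuinely different from the paper's, and it is heavier than necessary. The paper does not descend from the complete case by Artin approximation; it proves the lemma \emph{directly} in the henselian setting, by adapting the weak approximation argument of \cite[Lemma 3.1]{HHK22}. Concretely: it picks a monic generator $f_{\wp_i}\in R_{\wp_i}^h[x]$ of $E_{\wp_i}^h$ for each branch, uses a Krasner-type statement \cite[Theorem~1]{Brink06} and Hensel's lemma \cite[Theorem~8]{Brink06} to quantify how closely a monic polynomial must approximate $f_{\wp_i}$ in the $\wp_i$-adic topology to produce the same \'etale $F_{\wp_i}^h$-algebra, and then invokes the weak approximation theorem (\cite[Theorem VI.2.1]{bourbaki}) to find a single $f\in F_P^h[x]$ simultaneously close to all the $f_{\wp_i}$. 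That $f$ defines the required $E_P^h$.

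Your worry that ``simultaneous density fails'' is not correct and is the one substantive misstep in the proposal. The branches $\wp_i$ are distinct height-one primes of $R_P^h$, hence give pairwise inequivalent discrete valuations on the \emph{field} $F_P^h$, so Artin--Whaples applies: $F_P^h$ is dense in $\prod_i F_{\wp_i}^h$ (the paper notes that $F_P^h$ is dense in each $\widehat{F}_{\wp_i}$, hence in each intermediate field $F_{\wp_i}^h$, and weak approximation gives joint density). The constraint you have in mind lives in $R_P^h$ relative to $\prod_i\widehat{R}_{\wp_i}$, not in the field-level approximation; one can always multiply by a power of $t$ to make a candidate generator integral afterwards, which is exactly what the paper does. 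Consequently the ``complete-field input'' is not an obstacle at all: HHK22 Lemma~3.1 proves the complete analogue by the same argument, and the paper runs the argument directly over the henselian patches without needing it as a black box.

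Modulo that, your Artin-approximation descent is plausible but requires work you have only sketched: defining the branch rings $R_{A,\wp_i}^h$ and fields $F_{A,\wp_i}^h$ over an $R_P^h$-subalgebra $A\subseteq\widehat{R}_P$ in a way that is functorial and specializes correctly at $A=R_P^h$ and $A=\widehat{R}_P$, and verifying that the functor $\mathcal{G}$ carrying the tuple $(L_A,(\psi_i)_i)$ really commutes with filtered colimits. None of this is impossible — it parallels the bookkeeping in the proof of Theorem~\ref{patching_etale_henselian} — but it is considerably more machinery than the two-paragraph weak-approximation argument the paper actually gives, which works entirely inside $F_P^h$ and the $R_{\wp_i}^h$ and never leaves the henselian world.
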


\begin{proof} 
This proof is a henselian analogue of the proof of \cite[Lemma 3.1]{HHK23}. 
    Let~$t_i$ denote the uniformizer of~$\wp_i.$ Since~$F_{\wp_i}^h$ is infinite, by \cite[Corollary 4.2(d)]{FR17}, any \'etale~$F_{\wp_i}^h$-algebra of finite rank~$E_{\wp_i}$ can be generated a primitive element. Let~$f_{\wp_i} \in F_{\wp_i}^h[x]$ be the monic polynomial of the primitive element. We may assume that~$f_{\wp_i} \in R_{\wp_i}^h[x]$ by multiplying the primitive element by a power of ~$t_i$. 
    Write~$f_{\wp_i} = f_{1i} \cdots f_{ni}$, where each~$f_{ji}$ is irreducible over ~$R_{\wp_i}^h$. 
    By applying \cite[Theorem 1]{Brink06} to each~$f_{ji}$, there exists a positive integer~$p_{ji}$ such that any monic polynomial~$g \in R_{\wp_i}^h[x]$ that satisfies~$g \equiv f_{ji} \mod t_i^{p_{ji}}$ defines the same algebraic extension of~$F_{\wp_i}^h$ as~$f_{ji}$. Take~$p_i$ to be the maximum of the exponents~$p_{ji}$ for~$ 1 \leq j \leq n$.
    By Hensel's Lemma \cite[Theorem 8]{Brink06}, there is an integer~$m_i$ such that for any monic polynomial~$g_{\wp_i} \in R_{\wp_i}^h[x]$ that satisfies~$g_{\wp_i} \equiv f_{\wp_i} \mod t_i^{m_i}$, any irreducible factor ~$g_{ji}$ of~$g_{\wp_i}$ satisfy~$g_{ji} \equiv f_{ji} \mod t_i^{p_i}$ for all~$1\leq j \leq n$.

    By the Weak Approximation Theorem (\cite[Theorem VI.2.1]{bourbaki}),~$F_P^h$ is dense in~$\prod F_{\wp_i}^h$. Therefore, there exists~$f \in F_P^h[x]$ such that~$f \equiv f_{\wp_i} \mod t_i^{m_i}.$ Moreover, by our choice of~$m_i$, the irreducible factors~$h_{ji}$ of~$f$ over~$R_{\wp_i}^h$ are in bijection with the irreducible factors~$f_{ji}$ of~$f_{\wp_i}$, and~$h_{ji} \equiv f_{ji} \mod t^{p_i}$. Furthermore, by the argument in the previous parapgrah, our choice of~$p_i$ shows that each~$h_{ji}$ defines the same algebraic extension of~$F_{\wp_i}^h$ as~$f_{ji}$, whence~$f$ and~$f_{\wp_i}$ define the same {\'e}tale algebras over~$F_{\wp_i}^h$. Hence, the {\'e}tale~$F_P^h$-algebra~$E_P^h$ defined by~$f$ induces~$E_{\wp_i}$ by base change.
\end{proof}

\begin{lemma}\label{etale_big_patch}
    Suppose that we are given a degree~$d$ {\'e}tale~$F_U^h$-algebra~$E_U^h$ for all~$U \in \mathcal{U}.$ Then there exists an {\'e}tale~$F$-algebra~$E^h$ of degree~$d$ such that~$E^h \otimes_F F_U^h \cong E_U^h$.
\end{lemma}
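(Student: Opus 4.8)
The plan is to reduce this global patching statement to an iterated application of the two-patch result Theorem \ref{patching_etale_henselian} together with the branch-gluing Lemma \ref{etale_branch}, exactly mirroring the classical semi-global argument (e.g.\ \cite[Theorem 7.1]{HH10} or \cite{HHK15}) but now over the henselian quadruples $(F, F_P^h, F_U^h, F_{\wp}^h)$. First I would observe that since the closed fibre $X$ has only finitely many irreducible components and finitely many points in $\mathcal{P}$, the set $\mathcal{U}$ is finite, say $\mathcal{U} = \{U_1, \dots, U_r\}$, and the points of $\mathcal{P}$ and the branches at each point are also finite in number. The strategy is to induct on $r$. For $r = 1$ there is a subtlety: a single $U$ need not have closure all of $X$, so I would actually set up the induction on the number of irreducible components of $X$ together with the points of $\mathcal{P}$, gluing in one $U\in\mathcal{U}$ and the points of $\mathcal{P}$ lying in its closure at a time.

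Concretely, I would proceed as follows. Fix an enumeration $U_1,\dots,U_r$ of $\mathcal U$ and let $\mathcal X_j$ be the ``partial'' model obtained from the union of $\bar U_1,\dots,\bar U_j$ together with the points of $\mathcal P$ it meets; write $F^{(j)}$ for the corresponding ring of functions regular on $\overline{U_1\cup\dots\cup U_j}$ minus those points, passed to the henselization and then to fraction fields. At stage $j$, I will have constructed a degree-$d$ \'etale algebra $E^{(j)}$ over $F^{(j)}_{\!}$ inducing $E_{U_i}^h$ for $i\le j$. To pass from $j$ to $j+1$: for each point $P\in\mathcal P$ that lies on $\bar U_{j+1}$ and on some earlier $\bar U_i$, the algebra $E^{(j)}$ already determines, via base change, \'etale $F_{\wp}^h$-algebras along the branches $\wp$ at $P$ coming from the earlier components; meanwhile $E_{U_{j+1}}^h$ determines \'etale algebras along the branches of $U_{j+1}$ at $P$. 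I would first apply Lemma \ref{etale_branch} at each relevant $P$ to produce a degree-$d$ \'etale $F_P^h$-algebra $E_P^h$ compatible with all branches at $P$ (both the ``old'' and the ``new'' ones), and then apply the two-patch Theorem \ref{patching_etale_henselian} — with the two patches being (a neighborhood giving) $F^{(j)}$ and $F_{U_{j+1}}^h$, overlapping along the product of the branch fields $F_{\wp}^h$ at the common points — to glue $E^{(j)}$ and $E_{U_{j+1}}^h$ into $E^{(j+1)}$ over $F^{(j+1)}$. One must check the gluing isomorphisms are consistent, which follows because at each branch the isomorphism is pinned down (up to the data we carry along) by $E_P^h$ and the given $\phi$'s, and that degree $d$ is preserved since base change of a rank-$d$ \'etale algebra stays rank $d$. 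After the final stage $j = r$ we obtain $E^h := E^{(r)}$ over $F$, an \'etale $F$-algebra of degree $d$ with $E^h\otimes_F F_{U}^h\cong E_U^h$ for all $U\in\mathcal U$.

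The main obstacle I anticipate is purely bookkeeping rather than conceptual: keeping track of the compatible systems of isomorphisms at the branch fields so that at each inductive step the hypotheses of Theorem \ref{patching_etale_henselian} are genuinely met — in particular, that when $P$ lies on several components the \'etale $F_P^h$-algebra produced by Lemma \ref{etale_branch} is simultaneously compatible with the branches inherited from $E^{(j)}$ and those inherited from $E_{U_{j+1}}^h$, which is why Lemma \ref{etale_branch} is stated for an arbitrary finite collection of branches at $P$. A secondary point to handle with care is that the two-patch theorem was proved for the specific quadruple $(F,F_P^h,F_U^h,F_{\wp}^h)$, so at the intermediate stages I should phrase the gluing as a finite sequence of such two-patch problems (over the various $F^{(j)}$, which are themselves fraction fields of henselizations of function rings on the partial models) rather than a single $r$-fold amalgam; this is exactly how the analogous global patching statements are deduced from the two-patch case in \cite{HH10, HHK15}, and the same argument goes through here verbatim since Theorem \ref{patching_etale_henselian} supplies the needed two-patch equivalence and Artin approximation has already been absorbed into its proof.
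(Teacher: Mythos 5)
Your inductive strategy diverges from the paper's argument, and as written it has a real gap. The paper's proof is two lines: for each $P \in \mathcal{P}$, apply Lemma \ref{etale_branch} to produce $E_P^h$ compatible with $E_U^h \otimes_{F_U^h} F_{\wp}^h$ along every branch $\wp$ at $P$; fix the resulting isomorphisms $\phi$; this assembles a \emph{single} patching problem $(E_P^h, E_U^h; \phi)$ over the full system $(F, F_P^h, F_U^h, F_\wp^h)$, which Theorem \ref{patching_etale_henselian} solves in one step. There is no induction on components. This works because Theorem \ref{patching_etale_henselian} is meant to be read (consistently with \cite[Theorem 7.1]{HH10}, which it descends from via Artin approximation) as the \emph{simultaneous} statement over the whole finite inverse system: $F_1 = \prod_{P\in\mathcal{P}} F_P^h$, $F_2 = \prod_{U\in\mathcal{U}} F_U^h$, $F_0 = \prod_{\wp} F_\wp^h$. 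The indexing is notationally suppressed, but the functor $\mathcal{F}$ defined in the proof of Theorem \ref{patching_etale_henselian} already carries conditions (\ref{eq1})--(\ref{eq3}) for all $P$, $U$, $\wp$ at once, and the HH10 result it reduces to is precisely the multi-patch theorem.

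The concrete gap in your plan: you want to invoke Theorem \ref{patching_etale_henselian} with the two patches being $F^{(j)}$ and $F_{U_{j+1}}^h$, but $F^{(j)}$ is neither $F$ itself, nor a Hensel patch $F_P^h$ or $F_U^h$, nor a branch field $F_\wp^h$ of the model $\mathcal{X}$. The theorem was proven, via the Artin approximation argument, specifically for the quadruple $(F, F_P^h, F_U^h, F_\wp^h)$ attached to a model; it does not automatically extend to the quadruple $(F^{(j+1)}, F^{(j)}, F_{U_{j+1}}^h, \prod F_\wp^h)$ you need at each stage. You would have to re-establish the theorem with $F^{(j)}$ in the role of a patch, which entails redoing the functor construction, the finite-presentation check, and the descent via Artin approximation for these intermediate partial-model rings --- a substantial task that is not addressed by the remark ``the same argument goes through here verbatim.'' Your citation of HH10/HHK15 as precedent is also inaccurate: in those papers the multi-patch result is not deduced by inducting through two-patch gluings; patching is established directly for the finite inverse factorization system via simultaneous factorization/approximation over the full product $F_0$. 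In short, what actually closes the argument is reading the theorem you cite as already being the multi-patch statement, not chaining two-patch applications.
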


\begin{proof}
    For each point~$P \in \mathcal{P}$, each branch~$\wp$ at~$P$ lies on the closure of a unique~$U \in \mathcal{U}$. By Lemma~\ref{etale_branch}, there exists an {\'e}tale~$F_P^h$-algebra~$E_P^h$ of degree~$d$ such that~$E_{P}^h \otimes_{F_P^h} F_{\wp}^h$ is isomorphic to the \'etale~$F_{\wp}$-algebra~$E_U^h \otimes_{F_U^h} F_{\wp}^h$. Fix this isomorphism~$\phi$. Then we obtain a patching problem for \'etale algebras~$(E_P^h, E_U^h;\phi)$. By \Cref{cor:patching}, this patching problem has a solution, i.e., an \'etale~$F$-algebra~$E^h$, that has the desired property of the statement.
\end{proof}

\begin{lemma}\label{etale_small_patch}
    Suppose that for all~$P \in \mathcal{P}$, we are given an {\'e}tale~$F_P^h$-algebra~$E_P^h$ of degree~$d$ that is coprime to char$(k)$. Assume that the integral closure of~$R_P^h$ in~$E_P^h$ is unramified over~$R_P^h$. Then there exists an {\'e}tale~$F$-algebra~$E$ such that~$E \otimes_F F_P^h \cong E_P^h$ for all~$P \in \mathcal{P}$.
\end{lemma}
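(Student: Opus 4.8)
The plan is to imitate the proof of Lemma \ref{etale_big_patch}: for every $U\in\mathcal{U}$ I will manufacture a degree $d$ {\'e}tale $F_U^h$-algebra $E_U^h$ whose base change to each branch field $F_\wp^h$ (with $\wp$ a branch on $\bar{U}$) is isomorphic to $E_P^h\otimes_{F_P^h}F_\wp^h$, fix such isomorphisms $\phi_\wp$, and then solve the resulting patching problem $(\{E_P^h\}_{P\in\mathcal{P}},\{E_U^h\}_{U\in\mathcal{U}};\{\phi_\wp\})$ using Theorem \ref{patching_etale_henselian} (in its extension from a single quadruple to the full finite inverse system of Hensel patches, exactly as it is already used in Lemma \ref{etale_big_patch}). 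The resulting {\'e}tale $F$-algebra $E$ then satisfies $E\otimes_F F_P^h\cong E_P^h$ for every $P$ by construction, and it has degree $d$ since each $E_U^h$ does.

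The first step is to read off the branch data. Fix $P\in\mathcal{P}$ and let $S_P$ be the integral closure of $R_P^h$ in $E_P^h$; it is finite over $R_P^h$ because $R_P^h$ is excellent, and unramified over $R_P^h$ by hypothesis. For a branch $\wp$ of $X$ at $P$ lying on $\bar{U}$, base changing along $R_P^h\to R_\wp^h$ gives a finite, torsion-free, unramified, hence finite {\'e}tale $R_\wp^h$-algebra; since $R_\wp^h$ is a henselian DVR this is the unramified lift of a uniquely determined {\'e}tale algebra $\bar E_\wp$ over the residue field $\kappa(\wp)$, and $E_P^h\otimes_{F_P^h}F_\wp^h$ is recovered from $\bar E_\wp$ by inverting $t$. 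Moreover the vanishing $\Omega_{S_P/R_P^h}=0$ is preserved under the further base change $R_P^h\to R_P^h/\wp$, so $\bar E_\wp$ is unramified over the henselian DVR obtained by normalizing $R_P^h/\wp$ inside $\kappa(\wp)$; thus $\bar E_\wp$ is, up to base change, a degree $d$ {\'e}tale algebra over that DVR's residue field, a finite extension of $k$. (This is where $\gcd(d,\operatorname{char}k)=1$ enters: it keeps these residue extensions tame, which is what makes the realization in the next step possible.)

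Next I construct $E_U^h$. Fix $U\in\mathcal{U}$ with closed-fibre component $\bar{U}$ and function field $k(\bar{U})$; the branches on $\bar{U}$ sit over finitely many closed points $P_1,\dots,P_r$ of $\bar{U}$, and at $P_j$ the prescribed data is the (unramified) branch algebra coming from the {\'e}tale $\kappa(P_j)$-algebra of the previous step. By weak approximation on $k(\bar{U})$ — applied coefficient by coefficient and combined with the Krasner/Brink stability of {\'e}tale algebras under $\wp$-adic approximation as in the proof of Lemma \ref{etale_branch}, cf.\ \cite{Brink06} — I produce a monic degree $d$ polynomial $h\in F_U^h[x]$, close $\wp$-adically to defining polynomials of the $E_P^h\otimes_{F_P^h}F_\wp^h$ for every branch $\wp$ on $\bar{U}$; here one uses $k(\bar{U})\hookrightarrow F_U^h$ obtained by lifting (a separating transcendence basis of) $k(\bar{U})$ along the henselian pair $(R_U^h,IR_U^h)$, compatibly with the residue maps $F_U^h\to F_\wp^h$. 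Since $h$ is $\wp$-adically close to a separable polynomial, $\operatorname{disc}(h)$ is close to a nonzero element of $R_\wp^h\supseteq F_U^h$, so $\operatorname{disc}(h)\neq 0$ and $E_U^h:=F_U^h[x]/(h)$ is a degree $d$ {\'e}tale $F_U^h$-algebra with $E_U^h\otimes_{F_U^h}F_\wp^h\cong E_P^h\otimes_{F_P^h}F_\wp^h$ on every branch $\wp$ on $\bar{U}$. Finally, fixing isomorphisms $\phi_\wp$ and applying Theorem \ref{patching_etale_henselian} to the assembled patching problem yields the desired $E$.

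The main obstacle is this middle step: arranging that the branch data at the several points on each $\bar{U}$ is \emph{simultaneously} realized by one degree $d$ {\'e}tale algebra over $F_U^h$ reproducing all the prescribed $F_\wp^h$-algebras at once. This rests on (i) the unramifiedness hypothesis, which makes the branch algebras ``constant'' so that the approximation can be carried out in the residue fields where weak approximation is available; (ii) the coprimality of $d$ to $\operatorname{char}k$, which guarantees the tameness needed for the residue algebras to descend and be lifted cleanly; and (iii) a careful verification that $k(\bar{U})$ embeds into $F_U^h$ compatibly with all of the residue maps to the $F_\wp^h$. A secondary, purely bookkeeping point is that Theorem \ref{patching_etale_henselian} is stated for a single quadruple $(F,F_P^h,F_U^h,F_\wp^h)$, so one must invoke its routine extension to the finite bipartite inverse system of all points $P\in\mathcal{P}$ and components $U\in\mathcal{U}$, just as in the proof of Lemma \ref{etale_big_patch}.
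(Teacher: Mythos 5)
Your strategy is genuinely different from the paper's. The paper makes no attempt to manufacture \'etale algebras $E_U^h$ over the $U$-patches. It first checks that the completed algebras $E_P := E_P^h \otimes_{F_P^h} F_P$ again satisfy the hypotheses of the complete-case result \cite[Lemma 3.3]{HHK22} (unramifiedness persists under completion, via the Jacobian criterion), invokes that result as a black box to obtain an \'etale $\widehat{F}$-algebra $\widehat{E}$ with $\widehat{E}\otimes_{\widehat{F}}F_P\cong E_P$ for all $P$, and then descends to an \'etale $F$-algebra by Proposition~\ref{artin_modified} applied to the functor $\mathcal{F}(A)$ that records ``$L_A\otimes_{F_A}F_{A,P}^h\cong E_P^h\otimes_{F_P^h}F_{A,P}^h$ for all $P$'', with Proposition~\ref{etale_iso_henselian} upgrading ``isomorphic over $F_P$'' to ``isomorphic over $F_{\widehat{T},P}^h$'' so that $\widehat{E}\in\mathcal{F}(\widehat{T})$. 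That two-step reduction --- complete-case theorem, then Artin Approximation --- is the template of the whole manuscript and makes this lemma a short argument. You instead propose to redo, over the Hensel patches, essentially the entire geometric construction that \cite[Lemma 3.3]{HHK22} already performs over the complete patches, and only then feed the result into Theorem~\ref{patching_etale_henselian}.

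The step you yourself flag as ``the main obstacle'' --- building a single $E_U^h$ reproducing all prescribed branch data along $\bar U$ --- is where your argument has a genuine gap, not a bookkeeping one. All branches $\wp$ on $\bar U$, whether at the same or at different points $P_j$, restrict to the \emph{same} discrete valuation on $F_U^h$, namely the one cut out by the generic point $\eta$ of $\bar U$ (the valuation ring is $(R_U^h)_\eta$ in every case). Consequently $F_U^h$ is not dense in $\prod_\wp F_\wp^h$, and the Brink/weak-approximation mechanism of Lemma~\ref{etale_branch} cannot simply be transcribed with $F_P^h$ replaced by $F_U^h$. You correctly sense that one must descend to the residue level, but then: (i) the residue algebras $\bar E_\wp$ live over the fields $\kappa(\wp)\supsetneq k(\bar U)$, so ``weak approximation on $k(\bar U)$'' must be replaced by a genuine realization statement for finite \'etale covers of the affine curve $\bar U$ with prescribed local behavior at $P_1,\dots,P_r$ after shrinking $U$; and (ii) there is no canonical embedding $k(\bar U)\hookrightarrow F_U^h$ obtained by ``lifting a transcendence basis'' --- what the henselian pair $(R_U^h,IR_U^h)$ actually supplies is the equivalence of finite \'etale sites (cf.\ \cite[Lemma 09ZL]{stacks-project}), after which one must still verify, branch by branch and via uniqueness of unramified lifts over the henselian DVRs $R_\wp^h$, that the lifted $R_U^h$-algebra reproduces each $E_P^h\otimes_{F_P^h}F_\wp^h$. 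These are exactly the hard points of \cite[Lemma 3.3]{HHK22}; the paper's proof is short precisely because it imports that lemma rather than re-proving a henselian analogue of it.
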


\begin{proof}
    The \'etale~$F_P$-algebra~$E_P \coloneqq E_P^h \otimes_{F_P^h} F_P$ is necessarily of degree~$d$. The integral closure of~$\widehat{R}_P$ in~$E_P$ is unramified as the Jacobian matrix is of maximal rank. 
    It was shown in \cite[Lemma 3.3]{HHK23} that under the same assumptions, there exists an \'etale~$\widehat{F}$-algebra~$\widehat{E}$ such that~$\widehat{E}\otimes_{\widehat{F}} F_P \cong E_P$ for all $P\in \mathcal{P}$. By an argument similar the one in the proof of \Cref{patching_torsors}, the functor
    \begin{equation*}
        \mathcal{F}(A) = \{\text{\'etale~$F_A$-algebra~$L_A$}\mid L_A \otimes_{F_A} F_{A, P}^h \cong E_{P}^h \otimes_{F_{P}^h} F_{A, P}^h \textnormal{ for all $P\in \mathcal{P}$}\}
    \end{equation*} meets the requirement of Proposition~\ref{artin_modified}.  Since both \'etale~$F_{\widehat{T}, P}^h$-algebras~$\widehat{E} \otimes_{\widehat{F}} F_{\widehat{T},P}^h$ and~$E_P^h \otimes_{F_{P}^h} F_{\widehat{T},P}^h$ are isomorphic after base changing to~$F_P$, they must already be isomorphic over~$F_{\widehat{T},P}^h$ by \Cref{compatibility_torsor}. Therefore,~$\widehat{E} \in \mathcal{F}(\widehat{T})$. By Proposition~\ref{artin_modified}, there exists an \'etale ~$F$-algebra~$E$ as required in the theorem.
\end{proof}

\section{A local-global principle with respect to patches}\label{sec_lgp}

In this section we study local-global principles for principal homogeneous spaces and higher degree Galois cohomology groups over Hensel semi-global fields, by comparing to the known results for semi-global fields.

\subsection{A comparison between two cohomology sets}
Let~$T$ be an excellent henselian discrete valuation ring and let~$\widehat{T}$ be its completion. Let~$K, \widehat{K}$ denote the fraction fields of~$T, \widehat{T}$, respectively. Let~$F$ be a finitely generated field extension of~$K$ for which~$K$ is algebraically closed in~$F$. Under this assumption,~$F \otimes_T \widehat{T}$ is an integral domain and therefore~$\widehat{F} \coloneqq \Frac(F \otimes_T \widehat{T})$ is a field (see Proposition~\ref{linearly_disjoint}). As an example, one may take~$F$ to be a Hensel semi-global field, and~$\widehat{F}$ to be the corresponding semi-global field.

\begin{proposition}\label{hensel_inj}
    Let notation be as above. Let~$G$ be a linear algebraic group over~$F$. The natural map
    \begin{equation*}
        i \colon H^n(F, G) \longrightarrow H^n(\widehat{F}, G)
    \end{equation*} has trivial kernel for~$n \geq 1$ if~$G$ is abelian and~$n = 1$ if~$G$ is nonabelian.
\end{proposition}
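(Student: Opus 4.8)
\emph{Proof sketch.} The plan is to descend cohomology classes from $\widehat{F}$ to $F$ by Artin approximation in the form of Proposition~\ref{artin_modified}, exploiting that $\widehat{F}$ is a filtered colimit of fields. Write $\widehat{T}=\varinjlim_i A_i$ over the finite-type $T$-subalgebras $A_i\subseteq\widehat{T}$ (all domains, with inclusions as transition maps). Each $F_{A_i}:=\Frac(F\otimes_T A_i)$ is a field by Lemma~\ref{linearly_disjoint}, and $\widehat{F}=\Frac(F\otimes_T\widehat{T})=\Frac(\varinjlim_i(F\otimes_T A_i))=\varinjlim_i F_{A_i}$. Throughout we apply Proposition~\ref{artin_modified} with $R^h=T$ (a henselian DVR, hence its own henselization) and $\widehat{R}=\widehat{T}$.

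First consider $n=1$, allowing $G$ nonabelian. A class $\xi$ in $\ker i$ corresponds to a $G$-torsor $Z$ over $\Spec F$, an affine $F$-scheme of finite type; triviality of $i(\xi)$ means $Z(\widehat{F})\neq\emptyset$, and we must show $Z(F)\neq\emptyset$. Consider the functor $\mathcal{F}(A):=Z(F_A)$ on $T$-subalgebras of $\widehat{T}$, with base change as functoriality. Since $Z$ is of finite presentation over $F$ and $\widehat{F}=\varinjlim_i F_{A_i}$, one has $\mathcal{F}(\widehat{T})=Z(\widehat{F})=\varinjlim_i Z(F_{A_i})=\varinjlim_i\mathcal{F}(A_i)$, so $\mathcal{F}$ meets the hypotheses of Proposition~\ref{artin_modified}; as $\mathcal{F}(\widehat{T})\neq\emptyset$ we conclude $\mathcal{F}(T)=Z(F)\neq\emptyset$, i.e.\ $\xi$ is trivial.

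For $G$ abelian and $n\geq 1$, I would argue in the same spirit, but with a functor recording a cohomological nullhomotopy rather than a rational point. Fix a \v{C}ech cocycle $z$ (on some finite \'etale cover) representing $\xi$; since $\xi|_{\widehat{F}}=0$ and \'etale cohomology of a field is computed by such covers, there is a finite \'etale algebra $M$ over $\widehat{F}$ of some degree $d$ and a cochain $c$ over $M$ with $\partial c=z|_{\widehat{F}}$. All of this data is of finite presentation over $F$, so by $\widehat{F}=\varinjlim_i F_{A_i}$ it descends to a datum $(M_{A_i},c_{A_i})$ over some $F_{A_i}$. One then applies Proposition~\ref{artin_modified} to the functor $\mathcal{F}(A)$ whose elements over $F_A$ are pairs $(M_A,c_A)$ with $M_A$ a degree-$d$ \'etale $F_A$-algebra and $\partial c_A=z|_{F_A}$ (an appeal to Lemma~\ref{separable_ext_henselian} and Proposition~\ref{etale_iso_henselian}, exactly as in the proof of Theorem~\ref{patching_etale_henselian}, reconciles \'etale $F$-algebras with \'etale $\widehat{F}$-algebras when matching the data up); since $\mathcal{F}(\widehat{T})\neq\emptyset$ one gets $\mathcal{F}(T)\neq\emptyset$, i.e.\ $\xi=0$ over $F$. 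An alternative route: since $H^n(-,G)$ commutes with the filtered colimit $\widehat{F}=\varinjlim_i F_{A_i}$, it suffices to show $H^n(F,G)\to H^n(F_A,G)$ is injective for a single finite-type $T$-subalgebra $A\subseteq\widehat{T}$ that is a domain, and this follows because $F_A/F$ is a regular field extension ($\widehat{K}/K$ being separable by the Nagata/excellence argument of Lemma~\ref{linearly_disjoint}, and $K$ being algebraically closed in $\widehat{K}$ as in Lemma~\ref{separable_ext_henselian}, so that $F_A\otimes_F\overline{F}$ is a domain) and $V:=\Spec(F\otimes_T A)$, a model of $F_A$ over $F$, has a smooth $F$-rational point $p$: the inclusion $A\hookrightarrow\widehat{T}$ is a $\widehat{T}$-point of $\Spec A$, which Artin approximation (applied to $B\mapsto\Hom_T(A,B)$, cf.\ Proposition~\ref{artin_modified} or Theorem~\ref{artin2}) converts into a $T$-point, hence an $F$-point of $V$, which one arranges to be smooth after localizing $A$; then $H^n(F,G)\hookrightarrow H^n(\mathcal{O}_{V,p},G)\hookrightarrow H^n(F_A,G)$, the first map split by evaluation at $p$ and the second injective by purity for the regular local ring $\mathcal{O}_{V,p}$ (using that $|G|$ is invertible).

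The $n=1$ case is routine once the colimit description of $\widehat{F}$ and Proposition~\ref{artin_modified} are set up. The main obstacle lies in the abelian higher-degree case. In the first approach it is verifying that the nullhomotopy functor is of finite-presentation type — in particular that, for a \emph{fixed} $\xi$ with $\xi|_{\widehat{F}}=0$, the \v{C}ech cocycle and the splitting algebra $M$ can be pinned down over $F$ to bounded size (this is what makes $\mathcal F$ locally of finite presentation) — together with the bookkeeping identifying the \v{C}ech and Galois descriptions of the classes. In the alternative approach the obstacle is ensuring the Artin-approximated $F$-point of $V$ lies in the smooth locus (equivalently, that the chosen model $\Spec A$ is smooth over $T$ near the distinguished prime $A\cap t\widehat{T}$) and invoking the injectivity/purity of \'etale cohomology of regular local rings.
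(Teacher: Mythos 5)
Your primary route is essentially the paper's argument. For $n=1$ you re-prove the descent of torsor triviality (i.e.\ $Z(\widehat{F})\neq\emptyset\Rightarrow Z(F)\neq\emptyset$) directly from Proposition~\ref{artin_modified} applied to the functor $A\mapsto Z(F_A)$, whereas the paper simply cites \cite[Proposition~3.19]{HHKCPS20} — but that result is itself an Artin-approximation statement, so this is the same idea unwound. For $n>1$ you set up an Artin-approximation functor whose values record nullhomotopies of a fixed cocycle, exactly as the paper does; the paper works with continuous Galois cochains and defines
$\mathcal{F}_{\alpha}(A)=\{\beta\in C^{n-1}(F_A,G)\mid d^{n-1}\beta=\alpha_{F_A}\}$,
showing local finite presentation by noting that a trivializing nullhomotopy over $\widehat{F}$ lives on a finite Galois extension cut out by a single polynomial, whose coefficients land in some $F\otimes_T A_i$. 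Your \v{C}ech version (tracking a finite \'etale algebra $M$ and a cochain $c$ on it) is the same mechanism in a different cohomological model; the appeals to Lemma~\ref{separable_ext_henselian} and Proposition~\ref{etale_iso_henselian} that you mention are not actually needed in the paper's cochain-level bookkeeping, which is a small simplification in its favor.

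Your alternative route for $n>1$ (exhibit a smooth $F$-rational point $p$ on $V=\Spec(F\otimes_T A)$ via Artin approximation, then factor $H^n(F,G)\hookrightarrow H^n(\mathcal{O}_{V,p},G)\hookrightarrow H^n(F_A,G)$ using a retraction at $p$ and purity for the regular local ring) is genuinely different and geometrically appealing, but note two caveats. First, it needs purity/Gersten-type injectivity of $H^n(\mathcal{O}_{V,p},G)\to H^n(F_A,G)$, which is only available for finite torsion coefficients invertible in $F$ (as you flag); the paper's statement is for an arbitrary abelian linear algebraic group $G$, and its Artin-approximation proof imposes no such restriction, even though the application in Theorem~\ref{lgp_henselian} is to $\mu_m^{\otimes r}$ where this is harmless. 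Second, the reduction step should read that it suffices to prove injectivity of $H^n(F,G)\to H^n(F_A,G)$ for \emph{every} (or a cofinal family of) finite-type $T$-subalgebra $A\subseteq\widehat{T}$, not ``a single'' one, since you do not know in advance at which level $A_i$ the class $\xi$ becomes trivial; your argument in fact works uniformly in $A$, so this is only a matter of phrasing. Neither issue affects the correctness of your main (first) approach, which matches the paper.
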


\begin{proof}
    \textit{Case of~$n = 1$}: Note that~$H^1(F, G)$ classifies isomorphism classes of~$G$-torsors. Let~$Z$ represent a class~$\alpha \in H^1(F, G)$, for which~$i(\alpha)$ is the zero class in~$H^1(\widehat{F}, G)$. Therefore,~$Z_{\widehat{F}}$ is a trivial~$G_{\widehat{F}}$-torsor. By \cite[Proposition 3.19]{HHKCPS20},~$Z(F) \neq \emptyset$, so~$Z$ represents the trivial class in~$H^1(F, G)$.

    \textit{Case of~$n > 1$}: Let~$\alpha \in  H^n(F, G)$ and~$\widehat{\alpha} \coloneqq i(\alpha) = H^n(\widehat{F}, G).$ Suppose that~$\widehat{\alpha}$ is cohomologous to the zero class~$[0] \in H^n(\widehat{F}, G).$ We will show that~$\alpha \sim [0] \in H^n(F, G)$. Let~$d^{n-1} \colon C^{n-1}(\widehat{F}, G) \rightarrow C^{n}(\widehat{F}, G)$ denote the coboundary map. By assumption,~$\widehat{\alpha}$ is a cocyle in~$Z^n(\widehat{F}, G)$ that is the coboundary of a cochain in~$C^{n-1}(\widehat{F}, G)$, so there exists~$\widehat{\beta} \in C^{n-1}(\widehat{F}, G)$ such that~$d^{n-1}(\widehat{\beta}) = \widehat{\alpha}.$

    Let~$\widehat{T} = \varinjlim_i A_i$, where each~$A_i$ is a~$T$-subalgebra of~$\widehat{T}$. Then~$F_{A_i} \coloneqq \Frac(F \otimes_T A_i)$ is a field by Proposition~\ref{linearly_disjoint}. Define the following functor:
    \begin{align*}
        \mathcal{F}_{\alpha} \colon (T\subalg \widehat{T}) & \longrightarrow (\textnormal{Sets}) \\
        A                                                      & \longmapsto \mathcal{F}_{\alpha}(A),
    \end{align*} where~$$\mathcal{F}_{\alpha}(A) = \{\beta \in C^{n-1}(F_A, G)\mid d^{n-1}(\beta) = \alpha_{F_A} \in C^{n}(F_A, G)\}.$$ 

    \noindent Now we show that~$\mathcal{F}_{\alpha}$ satisfies the requirement of Proposition~\ref{artin_modified}, i.e., we show that~$\mathcal{F}_{\alpha}(\widehat{T}) = \varinjlim_i(\mathcal{F}_{\alpha}(A_i))$. First, the map~$A_i \to \widehat{T}$ automatically induces a map~$\mathcal{F}_{\alpha}(A_i) \to \mathcal{F}_{\alpha}(\widehat{T})$. Conversely, to show that~$\mathcal{F}(\widehat{T}) \subseteq \varinjlim_i(\mathcal{F}_{\alpha}(A_i))$, it suffices to show that there exists~$i$ such that~$\alpha_{F_{A_i}} \sim 0 \in H^i(F_{A_i}, G)$. Since~$\widehat{\alpha}$ is trivial, there exists a a finite Galois extension~$L/\widehat{F}$ with Galois group~$\Gamma$ such that~$\widehat{\alpha}$ is the coboundary in a cochain~$C^n(\Gamma, G(L))$. Note that~$G$ is of finite type over~$F$ and therefore defined by finitely many polynomials over~$F$. Moreover,~$L/\widehat{F}$ is defined by a polynomial~$f$ over~$\widehat{F}$ and we may assume the coefficients of~$f$ lie in~$F \otimes_T \widehat{T}$ by clearing the denominators. Therefore,~$\widehat{\alpha}$ is defined over the field extension of~$F$ obtained by adjoining these coefficients, and hence there exists some $i$ such that ~$\alpha_{F_{A_i}}$ vanishes. 
    By Proposition~\ref{artin_modified}, ~$\mathcal{F}_{\alpha}(T)$ is nonempty since $\widehat{\alpha} \in \mathcal{F}_{\alpha}(\widehat{T})$ by construction. Therefore, there exists~$\beta \in C^{n-1}(F, G)$,~$d^{n-1}(\beta) = \alpha.$ Hence,~$\alpha$ is cohomologous to the zero class~$[0]$ in~$H^n(F, G)$, whence~$\ker(i) = 0$.
\end{proof}

\subsection{A local-global principle with respect to patches} Let notation be as in Section~\ref{subsec:patching}. To study the local-global principle over~$F$, we compare it to that over~$\widehat{F}$.

\begin{theorem}\label{lgp_henselian}
    Let notation be as in the beginning of this section. The natural map
    \begin{equation*}
        H^n(F, G) \longrightarrow \prod_{P \in \mathcal{P}} H^n(F_P^h, G) \times \prod_{U \in \mathcal{U}} H^n(F_U^h, G)
    \end{equation*} has trivial kernel when
    \begin{equation*}
        H^n(\widehat{F}, G) \longrightarrow \prod_{P \in \mathcal{P}} H^n(F_P, G) \times \prod_{U \in \mathcal{U}} H^n(F_U, G)
    \end{equation*} has trivial kernel. Here~$n = 1$ if~$G$ is non-abelian and~$n \geq 1$ when~$G$ is abelian. In particular, 
    \begin{equation*}
        H^n(F, \mu_m^{\otimes r}) \longrightarrow \prod_{P \in \mathcal{P}} H^n(F_P^h, \mu_{m}^{\otimes r}) \times \prod_{U \in \mathcal{U}} H^n(F_U^h, \mu_{m}^{\otimes r})
    \end{equation*} has trivial kernel for~$n > 1$ when either of the following holds:
    \begin{enumerate}
        \item~$r = n - 1$ and~$m$ is not divisible by the characteristic of the residue field~$k$.
        \item~$m$ is not divisible by the characteristic of the residue field~$k$ and~$[F(\mu_m): F]$ is prime to~$m$; this is satisfied when~$m$ is prime or~$F$ contains a primitive~$m$-th root of unity.
    \end{enumerate}
\end{theorem}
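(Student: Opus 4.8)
The plan is to deduce Theorem~\ref{lgp_henselian} from Proposition~\ref{hensel_inj} together with the known local-global principles over semi-global fields. The key structural observation is the commutative square already displayed just before the statement: the left vertical map $H^n(F,G)\to H^n(\widehat F,G)$ is the arrow studied in Proposition~\ref{hensel_inj}, and the bottom horizontal map is the local-global map over the semi-global field $\widehat F$, whose kernel is assumed trivial. So first I would set up this square carefully, noting that the complete local rings of $\mathcal X$ and $\widehat{\mathcal X}$ at a point $P\in X$ agree, hence $F_P$ (resp.\ $F_U$, $F_\wp$) is intrinsically attached to either model; and that by construction the Hensel patch $F_P^h$ sits inside $F_P$ (via completing $R_P^h$ at $\mathfrak m_P$) and similarly $F_U^h\hookrightarrow F_U$. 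This gives the right vertical map as the product of the base-change maps $H^n(F_P^h,G)\to H^n(F_P,G)$ and $H^n(F_U^h,G)\to H^n(F_U,G)$.

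The diagram chase is then immediate: take $\alpha\in H^n(F,G)$ lying in the kernel of the top map, so its image in every $H^n(F_P^h,G)$ and $H^n(F_U^h,G)$ is trivial. Pushing $\alpha$ down to $H^n(\widehat F,G)$ and then along the bottom map equals first going along the right vertical map (by commutativity), which is trivial since each local component of $\alpha$ was already trivial. Hence the image of $\alpha$ in $H^n(\widehat F,G)$ lies in the kernel of the bottom local-global map, which is trivial by hypothesis, so $\alpha$ maps to the trivial class in $H^n(\widehat F,G)$. Now apply Proposition~\ref{hensel_inj}: the map $H^n(F,G)\to H^n(\widehat F,G)$ has trivial kernel (the hypotheses on $n$ and on $G$ abelian/nonabelian match exactly those in that proposition), so $\alpha$ itself is trivial. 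This proves the first assertion.

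For the ``in particular'' clauses I would specialize $G$ to the finite abelian group scheme $\mu_m^{\otimes r}$ (a linear algebraic group over $F$ since $\mu_m$ is), so that the general statement applies with $n>1$. It then remains to cite that the corresponding local-global principle holds over the \emph{semi-global} field $\widehat F$ under conditions (1) and (2). Case (1), $r=n-1$ with $m$ prime to $\operatorname{char} k$: this is the local-global principle for $H^n(-,\mu_m^{\otimes\,n-1})$ over semi-global fields established by Harbater--Hartmann--Krashen (the input used in \cite{HHK22}), applied to $\widehat F$; I would quote it verbatim. Case (2), $m$ prime to $\operatorname{char} k$ and $[F(\mu_m):F]$ prime to $m$: here a standard restriction--corestriction argument on the extension $F(\mu_m)/F$ reduces to the case where the roots of unity are present, i.e.\ to a twist-free coefficient module, where the semi-global local-global principle again applies; one checks $[\widehat F(\mu_m):\widehat F]$ divides $[F(\mu_m):F]$ so the coprimality to $m$ is inherited, and that the restriction maps $H^n(F,-)\to H^n(F(\mu_m),-)$ are compatible with the patch structure.

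The main obstacle is not the diagram chase, which is formal, but verifying cleanly that the right vertical map is genuinely induced by field inclusions $F_P^h\subseteq F_P$ and $F_U^h\subseteq F_U$ compatibly with everything --- i.e.\ that the Hensel patches really do embed into the complete patches as claimed in Section~\ref{patching_henselian} --- and, for clause (2), pinning down the precise form of the semi-global local-global statement one is invoking and checking that the restriction--corestriction reduction is legitimate at the level of the patch diagram (so that triviality locally over $F(\mu_m)$-patches combined with the degree being prime to $m$ forces triviality locally over $F$-patches, before one even invokes $\widehat F$). Once those compatibilities are in place, everything follows by combining Proposition~\ref{hensel_inj} with the cited semi-global results.
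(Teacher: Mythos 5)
Your argument is exactly the one the paper uses: chase the commutative square relating the Hensel-patch and completed-patch local--global maps, conclude that an element of the kernel of the top map dies in $H^n(\widehat F,G)$, and then invoke Proposition~\ref{hensel_inj} to finish, with the ``in particular'' clauses following because the hypothesis over $\widehat F$ is exactly \cite[Theorem~3.1.5, Corollary~3.1.6]{HHK14}. The only superfluous concern is your worry about re-running a restriction--corestriction reduction for clause~(2): the paper simply cites \cite[Corollary~3.1.6]{HHK14}, which already establishes the semi-global local--global principle in that case, so no separate compatibility check at the level of $F(\mu_m)$-patches is needed.
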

\begin{proof} By functoriality of Galois cohomology, we obtain a commutative diagram
\begin{center}
    \begin{tikzcd}
        &H^n(F, G) \arrow{r}{} \arrow{d}{}&\prod_{P \in \mathcal{P}} H^n(F_P^h, G) \times \prod_{U \in \mathcal{U}} H^n(F_U^h, G)\arrow{d}{}\\
        &H^n(\widehat{F}, G) \arrow{r}{} &\prod_{P \in \mathcal{P}} H^n(F_P, G) \times \prod_{U \in \mathcal{U}} H^n(F_U, G)
    \end{tikzcd}
\end{center} for~$n = 0, 1$ if~$G$ is nonabelian and any~$n \geq 0$ if~$G$ is abelian. 
    The first claim follows from diagram chasing and Proposition~\ref{hensel_inj}. The second claim holds by \cite[Theorem 3.1.5, Corollary 3.1.6]{HHK14}.
\end{proof}

\noindent Note that we do not know what to expect if the local-global principle over~$\widehat{F}$ does not hold.

\section{The period-index problem for higher degree Galois cohomology groups}\label{sec:period-index}

As an application of patching of \'etale algebras and the local-global principle, we prove a generalized period-index bound for higher degree cohomology groups~$\cohom{F}{m}$. Such a bound was considered in \cite{HHK23} when~$F$ is a semi-global field and~$m$ is a prime number that is not the characteristic of~$F$. In this section, we generalize their results to Hensel semi-global fields. We will also consider an analogous bound when~$m$ is not prime in Section~\ref{sec_non_prime}.

\subsection{Splitting dimensions and uniform bounds for cohomology classes} Throughout this section, let~$F$ be a field and let~$m$ a positive integer that is coprime to the characteristic of~$F$. The following definitions from Section 2 of \cite{HHK23} also extend here.

\begin{definition}
  Fix a degree~$i$. Let~$\alpha \in \cohom{F}{m}$. A field extension~$L/F$ is a \emph{splitting field} for~$\alpha$ if the natural map~$\cohom{F}{m} \rightarrow \cohom{L}{m}$ maps~$\alpha$ to the trivial class. Similarly, let~$B$ be a collection of classes in~$\cohom{F}{m}$. A field extension~$L/F$ is a \emph{splitting field} for~$B$ if~$L$ splits all the elements in~$B$.
\end{definition}

\begin{definition}
    The \emph{index of a class}~$\alpha \in \cohom{F}{m}$, denoted by~$\operatorname{ind}(\alpha)$, is the greatest common divisor of~$[L: F]$, where~$L$ ranges over all splitting fields of~$\alpha$ of finite degrees. Similarly, the \emph{index of a subset}~$B \subseteq \cohom{F}{m}$, denoted by~$\operatorname{ind}(B)$, is the greatest common divisor of~$[L: F]$, where~$L$ ranges over all splitting fields of~$B$ of finite degrees.
\end{definition}

By \cite[Remark 2.2 and Lemma 2.3]{HHK23}, when~$m = l$ is a prime number, for any non-trivial~$\alpha \in \coho{F}$,~$\operatorname{ind}(\alpha)$ must be a power of~$l$. More generally, for~$B \subseteq \coho{F}$,~$\operatorname{ind}(B)$ must also be a power of~$l$. Therefore, we may also define the following notions.

\begin{definition}
    Fix a prime~$l$ and a degree~$i$. We define \emph{i-splitting dimension at l of F} as the minimal exponent~$n$ of~$l$, denoted by~$\operatorname{sd}_l^i(F)$, so that for all~$\alpha \in \coho{F}$, the index of~$\alpha$ divides~$l^n$. 
\end{definition}

\begin{definition}
    The \emph{stable i-splitting dimension at l of F}, denoted by~$\operatorname{ssd}_l^i(F),$ is the minimal exponent~$n$ so that~$\operatorname{sd}_l^i(L) \leq n$ for all finite field extensions~$L/F$. 
\end{definition}

\begin{definition}
    The \emph{generalized stable i-splitting dimension at l of F}, denoted by~$\operatorname{gssd}_l^i(F)$ is the minimal exponent~$n$ so that the index of~$B$ divides~$l^n$ for all finite field extensions~$L/F,$ and for all finite subsets~$B \subseteq \coho{F}$.
\end{definition}

The last two stronger forms of splitting dimensions provide measurements for stability of splitting dimensions under finite field extensions.

\subsection{Splitting unramified cohomology classes}

Let~$E$ be a field and let~$v$ be a discrete valuation on~$E$. Let~$\kappa(v)$ denote the residue field. For any~$l$ that does not equal to the characteristic of~$\kappa(v)$, there is a specialization map~$r_v \colon \coho{E} \rightarrow H^{i-1}(\kappa(v), \mu_l^{\otimes i-2})$ (see \cite[Section 7.9]{GMS03}). If~$\mathscr{X}$ is a regular integral scheme with function field~$E$ and~$\mathscr{X}^{(1)}$ is the set of codimension one points of~$\mathscr{X}$, then every~$x \in \mathscr{X}^{(1)}$ defines a discrete valuation~$v_x$ on~$E$ . A class~$\alpha \in \coho{E}$ is said to be \emph{unramified at}~$x$ if~$r_{v_x}(\alpha) = 0$. Moreover,~$\alpha$ is \emph{unramified on}~$\mathscr{X}$ if it is unramified at all~$x \in \mathscr{X}^{(1)}$. Let~$H^i_{\textnormal{nr}}(E, \mu_l^{\otimes i - 1})^{\mathscr{X}}$ denote the subgroup of unramified classes of~$\coho{E}$.

\begin{lemma}\label{R_eta_h}
    Let notation be as in Section~\ref{subsec:patching}. Let~$U \in \mathcal{U}$ and let~$\eta$ be the generic point of the irreducible component~$X_0$ of~$X$ that contains~$U$. Let~$\tilde{R}_{\eta}^h \coloneqq \varinjlim_{V \subseteq U} R_V^h$, where~$V$ runs over affine open subsets of~$U$. Let~$\tilde{F}_{\eta}^h$ be the fraction field of~$\tilde{R}^h_{\eta}$. (Note that~$\tilde{R}_{\eta}^h$ and~$\tilde{F}_{\eta}^h$ are the Hensel analogues to~$R_{\eta}^h$ and~$F^{h}_{\eta}$ defined in \cite[Section 3.2]{HHK14}.) Then~$\tilde{R}_{\eta}^h$ is a discrete valuation ring with residue field~$k(U)$ and~$\tilde{F}_{\eta}^h = \varinjlim_{V\subseteq U} F_V^h$.
\end{lemma}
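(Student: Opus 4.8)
The plan is to reduce everything to the analogous statement over the completed picture by exploiting the fact that henselization commutes with the relevant direct limits, and that a direct limit of discrete valuation rings along local homomorphisms is again a discrete valuation ring. First I would observe that $\tilde R^h_\eta = \varinjlim_{V\subseteq U} R_V^h$ is obtained by henselizing each $R_V$ along the ideal $I_V$ cutting out $V$ and then taking the colimit; since the transition maps $R_V \to R_{V'}$ for $V' \subseteq V$ are localizations (restricting to a smaller open subset of the same irreducible curve), and henselization commutes with localization and with filtered colimits, we get $\tilde R^h_\eta = \varinjlip (R_V)^h = \bigl(\varinjlim_V R_V\bigr)^{h}$ in the appropriate sense, i.e. $\tilde R_\eta^h$ is the henselization of $R_\eta := \varinjlim_V R_V$ along its maximal ideal. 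Here $R_\eta = \mathcal{O}_{\mathcal X,\eta}$ is the local ring of $\mathcal X$ at the generic point $\eta$ of the component $X_0$, which is a discrete valuation ring (with uniformizer $t$) because $\mathcal X$ is normal and $\eta$ has codimension one, with residue field $k(X_0) = k(U)$.

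Next I would invoke the standard fact that the henselization of a discrete valuation ring is again a discrete valuation ring with the same residue field and the same value group (see e.g. \cite{stacks-project}, or the discussion of henselian local rings in Section \ref{sec_artin}): henselization is a filtered colimit of \'etale neighborhoods, each of which is a localization of a finite \'etale extension, and such extensions of a DVR with residue field extension being trivial are again DVRs with residue field $k(U)$. Hence $\tilde R^h_\eta$ is a discrete valuation ring with residue field $k(U)$. The claim $\tilde F^h_\eta = \varinjlim_{V\subseteq U} F_V^h$ then follows by passing to fraction fields: fraction field commutes with filtered colimits of domains along injective maps (each $R_V^h \hookrightarrow R_{V'}^h$ is injective since all these rings sit inside a common fraction field, or one argues that henselization of a domain is a domain), so $\operatorname{Frac}(\varinjlim_V R_V^h) = \varinjlim_V \operatorname{Frac}(R_V^h) = \varinjlim_V F_V^h$, which is by definition $\tilde F^h_\eta$.

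The main obstacle, and the point requiring the most care, is justifying that henselization genuinely commutes with the filtered colimit $\varinjlim_{V\subseteq U} R_V$ in the presence of the ideal $I_V$ defining $V$ — i.e. that the colimit of the henselian pairs $(R_V, I_V)$ is the henselian pair associated to $(R_\eta, \mathfrak m_\eta)$, where $\mathfrak m_\eta$ is the maximal ideal. This is where one must be careful that the ideal $I_V$ (which cuts out the full affine curve $V$, not just $\eta$) localizes, as $V$ shrinks toward $\eta$, to the maximal ideal $\mathfrak m_\eta$ of $R_\eta$; once that identification is in place, the universal property of henselization and its compatibility with filtered colimits of pairs (as in \cite[Lemma 15.12.4]{stacks-project} or the construction via \'etale neighborhoods recalled in Section \ref{sec_artin}) finish the argument. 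I would also double-check that $R_\eta$ is indeed the local ring of $\mathcal X$ at $\eta$ and hence a DVR by normality of $\mathcal X$, which uses that $\eta$ is a codimension-one point lying on the closed fibre. The residue field computation $k(U) = k(X_0)$ is immediate since $U$ is a dense open in $X_0$.
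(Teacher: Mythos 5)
Your proof is correct but takes a genuinely different route from the paper's. The paper works directly with the valuation: it equips $\tilde F_\eta^h$ with the $\eta$-adic discrete valuation by compatibility across the $F_V^h$, then shows that $\tilde R_\eta^h$ is the full valuation ring via a Krull-domain and finitely-many-poles argument (using that $R_V^h$ inherits the Krull property from its completion $\widehat R_V$), and finally computes the residue field by hand from the compatible reductions $R_V^h/\eta R_V^h = k(V)$. You instead identify $\tilde R_\eta^h$ structurally as the henselization of the DVR $\mathcal{O}_{\mathcal{X},\eta}$, by commuting henselization of pairs with the filtered colimit $\varinjlim_V(R_V,I_V) = (\mathcal{O}_{\mathcal{X},\eta},\mathfrak m_\eta)$; the DVR and residue-field assertions then fall out of normality of $\mathcal{X}$ at the codimension-one point $\eta$ together with the standard behavior of henselization of a DVR. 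This bypasses the Krull-domain and divisor bookkeeping entirely, which is a real simplification. One small caveat: the parenthetical claim that the transition maps $R_V \to R_{V'}$ are literally localizations is neither needed for the commutation argument nor obviously true (since $R_{V'}$ a priori contains more than a localization of $R_V$ at finitely many elements). What you actually need, and correctly flag as the crux, is that henselization of pairs commutes with filtered colimits of pairs --- e.g.\ by observing that the filtered colimit of the henselian pairs $(R_V^h, I_V R_V^h)$ is again a henselian pair and then verifying the universal property directly --- together with the identification $\varinjlim_V I_V R_\eta = \mathfrak m_\eta$, which need only hold at the level of radicals since henselization depends only on the radical of the ideal.
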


Note that here~$k(U)$ denotes the function field of~$U$. It is also the residue field of~$\Spec(R_U^h)$ at the generic point~$\eta$ of~$U$. 

\begin{proof}[Proof of Lemma~\ref{R_eta_h}] We will follow the proof of \cite[Lemma 3.2.1]{HHK14}.
    For any~$V$,~$F_V^h \subseteq \tilde{F}^h_{\eta}$ and any element~$ a/b \in \tilde{F}_{\eta}^h$, we must have~$a, b \in R_V^h$ for some common~$V$. Therefore,~$\tilde{F}_{\eta}^h = \varinjlim_{V\subseteq U} F_V^h$. 
    
    We may view~$\eta$ as a prime ideal of~$R_V^h$. Then each field~$F_{V}^h$ is a discretely valued field with respect to the~$\eta$-adic valuation, and the~$\eta$-adic valuations are compatible among all the fields~$F_V^h$ for all~$V \subseteq U$, whence~$\tilde{F}_{\eta}^h$ is an~$\eta$-adic discretely valuated field. We then have to show that the valuation ring of~$\tilde{F}_{\eta}^h$ is exactly~$\tilde{R}_{\eta}^h$. 
    
    First, we note the fact that the~$t$-adic and the~$\eta$-adic valuations are equivalent since~$\eta$ is the radical of~$(t)$. It is easy to see that~$\tilde{R}_{\eta}^h$ is contained in the valuation ring of~$\tilde{F}_{\eta}^h$. 
    For the other direction of containment, let~$y \in \tilde{F}_{\eta}^h$ be an element that lies in the valuation ring with respect to~$\eta$, i.e.,~$v_{\eta}(y) \geq 0$. We write~$y = a/b$, where~$a, b \in R_V^h$ for some~$V \subseteq U$. Note that~$R_V^h$ is a Krull domain, since~$\widehat{R}_V$, the completion of~$R_V^h$ with respect to the ideal that defines~$V$ in~$\Spec(R_V)$, is a Krull domain (see \cite[Chapter 7 Proposition 16]{bourbaki}). Therefore, we may uniquely write ~$b = u\cdot\mathfrak{p}_1^{\alpha_1}\cdots \mathfrak{p}_k^{\alpha_k}$, where~$u$ is a unit in~$R_V^h$ and~$\alpha_i = v_{\mathfrak{p}_i}(b)$. Then the divisor associated to~$b$,~$\operatorname{div}(b)$, can be uniquely written as a linear combination prime divisors~$\alpha_1 \mathfrak{p}_1 + \dots + \alpha_n\mathfrak{p}_n$. Moreover, the loci of these prime divisors excluding the irreducible closed fibre~$V$ of~$\operatorname{Spec}(R_V^h)$ itself intersect~$V$ at finitely many points. By deleting these points from~$V$, we can assume that~$b$ is invertible in~$R_V^h[t^{-1}]$. Moreover, since~$v_{\eta}(a/b) \geq 0$,~$a/b$ does not have poles on ~$\operatorname{Spec}(R_V^h)$ and therefore lies in~$R_V^h \subseteq \tilde{R}_{\eta}^h$. Then we conclude that~$\tilde{R}_{\eta}^h$ is a valuation ring of~$\tilde{F}_{\eta}^h$. 

    Since the~$\eta$-adic valuations on~$R_V^h$ for all~$V \subseteq U$ are compatible and also induce the~$\eta$-adic valuation on~$\tilde{R}_{\eta}^h$, the maximal ideal~$\eta \tilde{R}^h_{\eta}$ of~$\tilde{R}^h_{\eta}$ is just~$\varinjlim_{V \subseteq U}\eta R_V^h$. Since~$R_V^h/\eta R_V^h = k(V) = k(U)$ for all~$V \subseteq U$, we must have~$\tilde{R}^h_{\eta}/\eta\tilde{R}^h_{\eta} \cong k(U)$.
\end{proof}

\begin{lemma}\label{delete}
    Let notation be as in Section~\ref{subsec:patching}. Let~$U \in \mathcal{U}$. Suppose that~$\alpha \in \coho{F_U^h}$ is unramified on~$\operatorname{Spec}(R_U^h)$. There exists a nonempty subset~$U'$ of~$U$ such that~$\alpha_{F_{U'}^h}$ is in the image of~$\coho{R_{U'}^h} \rightarrow \coho{F_{U'}^h}$.
\end{lemma}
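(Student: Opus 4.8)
The plan is to transfer the question to the generic point $\eta$ of the irreducible component of the closed fibre containing $U$, where the relevant local ring is a discrete valuation ring and the extension problem becomes elementary, and then to spread the resulting extension out over a nonempty open $U'\subseteq U$; this mirrors the arguments over complete bases in \cite{HHK14}, \cite{HHK22}, with filtered colimits replacing completions.

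First I would invoke Lemma~\ref{R_eta_h}: the ring $\tilde{R}_{\eta}^h=\varinjlim_{V\subseteq U}R_V^h$ is a discrete valuation ring with residue field $k(U)$ and fraction field $\tilde{F}_{\eta}^h=\varinjlim_{V\subseteq U}F_V^h$. In particular $F_U^h\subseteq\tilde{F}_{\eta}^h$, and the valuation of $\tilde{R}_{\eta}^h$ restricts on $F_U^h$ to the discrete valuation $v_\eta$ attached to the codimension-one point $\eta$ of $\operatorname{Spec}(R_U^h)$ lying over $t$, with ramification index one (the rings $R_V^h$, $V\subseteq U$, carry compatible $\eta$-adic valuations, as in the proof of Lemma~\ref{R_eta_h}), and with the identification $\kappa(v_\eta)=k(U)$. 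Let $\alpha_\eta\in\coho{\tilde{F}_{\eta}^h}$ be the image of $\alpha$. Since $\alpha$ is unramified on $\operatorname{Spec}(R_U^h)$ it is in particular unramified at $\eta$, so by functoriality of residue maps along the ramification-index-one extension $F_U^h\hookrightarrow\tilde{F}_{\eta}^h$ the class $\alpha_\eta$ is unramified with respect to $\tilde{R}_{\eta}^h$. As $l$ is invertible in $k(U)$, the localization sequence in \'etale cohomology for $\operatorname{Spec}\tilde{F}_{\eta}^h\hookrightarrow\operatorname{Spec}\tilde{R}_{\eta}^h$ together with purity in codimension one yields exactness of
\begin{equation*}
    \coho{\tilde{R}_{\eta}^h}\longrightarrow\coho{\tilde{F}_{\eta}^h}\overset{\partial_{v_\eta}}{\longrightarrow}H^{i-1}\!\left(k(U),\mu_l^{\otimes i-2}\right)
\end{equation*}
(see \cite[Section~7.9]{GMS03}), so $\alpha_\eta$ lifts to a class $\beta\in\coho{\tilde{R}_{\eta}^h}$.

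Next I would spread out. Since \'etale cohomology commutes with filtered colimits of rings, $\coho{\tilde{R}_{\eta}^h}=\varinjlim_{V\subseteq U}\coho{R_V^h}$ and $\coho{\tilde{F}_{\eta}^h}=\varinjlim_{V\subseteq U}\coho{F_V^h}$. The first identity produces a nonempty open $V_0\subseteq U$ and a class $\beta_0\in\coho{R_{V_0}^h}$ whose image in $\coho{\tilde{R}_{\eta}^h}$ is $\beta$; let $\beta_0'\in\coho{F_{V_0}^h}$ be the image of $\beta_0$. Then $\alpha|_{F_{V_0}^h}$ and $\beta_0'$ have the same image $\alpha_\eta$ in $\coho{\tilde{F}_{\eta}^h}=\varinjlim_{V\subseteq U}\coho{F_V^h}$, so the second identity gives a nonempty open $U'\subseteq V_0$ with $\alpha|_{F_{U'}^h}=\beta_0'|_{F_{U'}^h}$ in $\coho{F_{U'}^h}$. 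Because $U'\subseteq V_0$ induces a morphism of henselian pairs and hence a ring map $R_{V_0}^h\to R_{U'}^h$, the class $\beta_0'|_{F_{U'}^h}$ is the image of $\beta_0$ under $\coho{R_{V_0}^h}\to\coho{R_{U'}^h}\to\coho{F_{U'}^h}$. Hence $\alpha|_{F_{U'}^h}$ lies in the image of $\coho{R_{U'}^h}\to\coho{F_{U'}^h}$, which is the claim.

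The step I expect to require the most care is the reduction to the discrete valuation ring $\tilde{R}_{\eta}^h$: one must pin down the residue (specialization) maps $r_v$ and verify their compatibility along the chain $F_U^h\subseteq F_{V_0}^h\subseteq\tilde{F}_{\eta}^h$ — all of ramification index one since the $\eta$-adic valuations are compatible — and observe that the displayed exact sequence only uses purity in codimension one for a discrete valuation ring, which is classical and avoids any appeal to absolute purity. Everything else is a routine application of the fact that \'etale cohomology commutes with filtered colimits of rings.
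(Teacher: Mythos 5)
Your proposal is correct and follows essentially the same route as the paper's proof: pass to the discrete valuation ring $\tilde{R}_{\eta}^h$ via Lemma~\ref{R_eta_h}, use unramifiedness and the residue exact sequence to lift the class over $\tilde{R}_{\eta}^h$, then use that \'etale cohomology commutes with filtered colimits to descend to some $\coho{R_{V_0}^h}$ and finally shrink to a $U'$ where the two classes agree in $\coho{F_{U'}^h}$. The only cosmetic difference is the citation for purity/residue at a DVR (you cite \cite[Section~7.9]{GMS03}; the paper cites \cite[Section~3.3]{CT95}), and you spell out the ramification-index-one compatibility that the paper leaves implicit.
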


\begin{proof}
    Let~$\tilde{R}_{\eta}^h \coloneqq \varinjlim_{V\subseteq U} R^h_V$ and~$\tilde{F}_{\eta}^h$ the fraction field of~$\tilde{R}^h_{\eta}$. By Lemma~\ref{R_eta_h},~$\tilde{F}_{\eta}^h = \varinjlim_{V\subseteq U} F_V^h,$ and~$\tilde{R}_{\eta}^h$ is a discrete valuation ring with residue field~$k(U)$. Since~$\alpha$ is unramified, so is its image~$\alpha_{\tilde{F}_{\eta}^h}$ by functoriality and the fact that ~$\tilde{F}_{\eta}^h~$ and~$F_U^h$ have the same residue field~$k(U)$.  Therefore, there exists~$\tilde{\alpha} \in \coho{\tilde{R}_{\eta}^h}$ such that~$\tilde{\alpha}$ is mapped to~$\alpha_{\tilde{F}^h_{\eta}}$ by \cite[Section 3.3]{CT95}. Moreover, by \cite[Theorem 09YQ]{stacks-project}, 
    \begin{align*}
        \coho{\tilde{R}^h_{\eta}} &= \varinjlim_{V \subseteq U} \coho{R_V^h}, \\ \coho{\tilde{F}^h_{\eta}} &= \varinjlim_{V \subseteq U} \coho{F_V^h}.
    \end{align*} Therefore, there exist~$V \subseteq U$ and~$\tilde{\alpha}' \in \coho{R_V^h}$ such that~$\tilde{\alpha}'$ is mapped to~$\tilde{\alpha}$. Both the triangle and the square in the following diagram are commutative by functoriality.
    \begin{center}
        \begin{tikzcd}
            &\coho{F_U^h} \arrow{r}{} \arrow{dr}{}&\coho{F_V^h} \arrow{d}{} &\coho{R_V^h}\arrow{l}{}\arrow{d}{}\\
            & &\coho{\tilde{F}^h_{\eta}} & \coho{\tilde{R}^h_{\eta}}  \arrow{l}{}
        \end{tikzcd}
    \end{center} Therefore,~$\alpha_{F_{V}^h}, \tilde{\alpha}_{{F_V^h}}'$ have the same image in~$\coho{\tilde{F}^h_{\eta}}$. By Lemma~\ref{R_eta_h} again, $\tilde{F}^h_{\eta} = \varinjlim_{W \subseteq V} F_W^h$,
    so we may find~$U'\subseteq V$ such that the image of~$\alpha_{F_{V}^h}, \tilde{\alpha}_{{F_V^h}}'$ under~$\coho{F_V^h} \to \coho{F_{U'}^h}$ coincide. Therefore,~$U'$ is as desired.
\end{proof}

\begin{proposition}\label{split_unramified}
    Let~$K$ be an excellent henselian discretely valued field with residue field~$k$. Let~$F$ be a one-dimensional function field over~$K$. Let~$l$ be a prime that does not divide the characteristic of~$k$.
    \begin{enumerate}[label=(\alph*)]
        \item Suppose that~$\alpha \in \coho{F}$ is unramified on a regular model~$\mathcal{X}$ of~$F$ and~$i \geq 1$. Then 
        \begin{equation*}
            \operatorname{ind}(\alpha)\mid l^{\ssd_l^i(k) + \ssd_l^i(k(x))}.
        \end{equation*}
        \item Suppose that~$B \subseteq \coho{F}$ consists of finitely many classes that are unramified on a regular model~$\mathcal{X}$ of~$F$ and~$i \geq 2$. Then 
        \begin{equation*}
            \operatorname{ind}(B)\mid l^{\gssd_l^i(k) + \gssd_l^i(k(x))}.
        \end{equation*}
    \end{enumerate}
\end{proposition}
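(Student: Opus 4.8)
The plan is to adapt the proof of the analogous statement for semi-global fields in \cite{HHK22}, replacing complete patches by Hensel patches and using the two new tools at hand: patching for \'etale algebras over Hensel semi-global fields (Theorem \ref{patching_etale_henselian}) and the local--global principle over Hensel semi-global fields (Theorem \ref{lgp_henselian}). For part (a), set $d := l^{\ssd_l^i(k)+\ssd_l^i(k(x))}$; the goal is to produce a degree-$d$ \'etale $F$-algebra $E$ all of whose field components split $\alpha$, for then, writing $E=\prod_j E_j$, one has $\ind(\alpha)\mid\gcd_j[E_j:F]$, and since $\sum_j[E_j:F]=d$ this gcd divides $d$. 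Part (b) is the same with $\gssd$ in place of $\ssd$, the finite set $B$ in place of $\alpha$, and the hypothesis $i>2$ as in \cite{HHK22}.

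First I would construct the local splitting algebras. For $U\in\mathcal{U}$: by Lemma \ref{delete} (after shrinking $U$, which only enlarges $\mathcal{P}$ and is harmless), $\alpha_{F_U^h}$ is the image of a class in $\coho{R_U^h}$; restricting to the henselian discrete valuation ring $\tilde{R}_\eta^h$ of Lemma \ref{R_eta_h}, whose residue field is $k(U)$, and using that the cohomology of a henselian local ring agrees with that of its residue field, $\alpha$ is controlled by a class $\bar{\alpha}_U\in\coho{k(U)}$. Since $k(U)$ is a one-variable function field over $k$, hence a finite extension of a rational function field $k(x)$, we get $\ind(\bar{\alpha}_U)\mid l^{\operatorname{sd}_l^i(k(U))}\mid l^{\ssd_l^i(k(x))}$, so there is an \'etale $k(U)$-algebra splitting $\bar{\alpha}_U$ on each component with degree dividing $l^{\ssd_l^i(k(x))}$; tensoring with a suitable separable field extension makes the degree exactly $d$ without destroying the splitting property. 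Lifting this along the henselian pair $(\tilde{R}_\eta^h,\mathfrak{m})$ and then ``unlimiting'' back to some $F_U^h$ (shrinking $U$ once more, using that $\coho{-}$ commutes with the relevant filtered colimit) yields a degree-$d$ \'etale $F_U^h$-algebra $E_U^h$ with $\alpha_{E_U^h}=0$. For $P\in\mathcal{P}$: since $\alpha$ is unramified on the regular model, $\alpha_{F_P^h}$ is the image of $\bar{\alpha}_P\in\coho{R_P^h}=\coho{\kappa(P)}$, and $\kappa(P)$ is finite over $k$, so $\ind(\bar{\alpha}_P)\mid l^{\operatorname{sd}_l^i(\kappa(P))}\mid l^{\ssd_l^i(k)}$; I build a degree-$d$ \'etale $F_P^h$-algebra $E_P^h$ splitting $\alpha_{F_P^h}$ in the same way, using henselianness of $R_P^h$.

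Next I would patch. The $E_P^h$ and $E_U^h$ must be made to agree over each Hensel branch field $F_\wp^h$ in order to form a patching problem for \'etale algebras; as in \cite{HHK22} this is arranged by first choosing the degree-$d$ splitting \'etale algebra over each branch and then invoking Lemma \ref{etale_branch} (together with Lemma \ref{etale_big_patch}) to produce compatible $P$-data, after which Theorem \ref{patching_etale_henselian} yields a degree-$d$ \'etale $F$-algebra $E$ with $E\otimes_F F_U^h\cong E_U^h$ and $E\otimes_F F_P^h\cong E_P^h$. It remains to check $\alpha_E=0$, i.e.\ $\alpha_{E_j}=0$ for each field component $E_j$ of $E$. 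Each $E_j$, after replacing its constant field by the algebraic closure of $K$ in it, is again a Hensel semi-global field, and $\alpha_{E_j}$ is unramified on a regular model $\mathcal{X}_j$ dominating $\mathcal{X}$ (refined so that all closed-fibre components are regular). By construction $\alpha_{E_j}$ dies on the patches of $E_j$ lying over the patches of $F$ we controlled; using unramifiedness together with the residue exact sequences of the henselian discrete valuation rings at the branches and the injectivity of $\coho{-}$ under purely transcendental extensions, the vanishing propagates to all patches of $E_j$. Theorem \ref{lgp_henselian}---applicable with $G=\mu_l^{\otimes i-1}$ since $i>1$ and $m=l$ is prime to $\operatorname{char}(k)$ (the case $r=n-1$)---then gives $\alpha_{E_j}=0$, completing part (a); part (b) runs identically, with the local splitting algebras now splitting all of $B$ at once (via $\gssd$) and with $i>2$.

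The main obstacle is exactly this passage through patching and the local--global verification. Producing a \emph{single} degree-$d$ \'etale $F$-algebra simultaneously induced by splitting algebras at every Hensel patch forces one to reconcile the $P$-local splitting (costing $l^{\ssd_l^i(k)}$) with the $U$-local splitting (costing $l^{\ssd_l^i(k(x))}$) over the branch fields---precisely the role of Lemmas \ref{etale_branch}, \ref{etale_big_patch}, \ref{etale_small_patch}---and then checking that the resulting \'etale algebra genuinely splits $\alpha$ rests on the local--global principle \ref{lgp_henselian}, the new input in the henselian setting, whose validity in turn rests on Proposition \ref{hensel_inj} and the known local--global principle over the associated semi-global field $\widehat{F}$. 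The remaining ingredients---\'etale-algebra lifting along henselian pairs, the colimit manipulations, and the final gcd bookkeeping---are routine once these are in place.
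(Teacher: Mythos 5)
Your overall blueprint matches the paper's proof: reduce via Lemma~\ref{delete} to classes defined integrally, descend to the residue curves using henselianness, split over $k(U)$ and $\kappa(P)$ at the cost of $\ssd_l^i(k(x))$ and $\ssd_l^i(k)$ respectively, lift to \'etale algebras over the Hensel patches, patch to $F$, and finish by the local--global principle (Theorem~\ref{lgp_henselian}) together with the observation that a model of the resulting extension has all its patches among the factors of $L\otimes_F F^h_\xi$. However, there is a genuine gap in the patching step as you describe it.

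You ask Theorem~\ref{patching_etale_henselian} to produce a single degree-$d$ \'etale $F$-algebra $E$ with \emph{both} $E\otimes_F F_U^h\cong E_U^h$ and $E\otimes_F F_P^h\cong E_P^h$, where $E_U^h$ and $E_P^h$ are your prescribed local splitting algebras. But a patching problem requires the $P$- and $U$-data to already agree along every branch, and there is no reason the restrictions of $E_U^h$ and $E_P^h$ to $F_\wp^h$ should coincide. Your proposed fix --- first fix branch data and then invoke Lemma~\ref{etale_branch} to produce compatible $P$-data --- destroys exactly what you need: Lemma~\ref{etale_branch} gives you \emph{some} $\tilde E_P^h$ restricting to the chosen branch algebras, but with no guarantee that $\tilde E_P^h$ splits $\alpha_{F_P^h}$. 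Likewise Lemma~\ref{etale_big_patch} controls only the $U$-side (the $P$-side it produces is auxiliary) and Lemma~\ref{etale_small_patch} only the $P$-side. The paper resolves this by \emph{not} trying to control both sides in a single patching: it applies Lemma~\ref{etale_big_patch} to get an $F$-algebra $L_1$ with the right $U$-local behavior, separately applies Lemma~\ref{etale_small_patch} to get $L_2$ with the right $P$-local behavior, forms $L_1\otimes_F L_2$ (whose components contain both a $U$-local and a $P$-local splitting field after base change to each patch), and then picks a field component $L$ with $v_l([L:F])\le\gssd_l^i(k)+\gssd_l^i(k(x))$ by a pigeonhole argument. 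This tensoring step is the missing idea in your write-up. A second, smaller point: you aim for degree \emph{exactly} $d=l^{\ssd_l^i(k)+\ssd_l^i(k(x))}$, which forces the local splitting degrees to be powers of $l$; the residue-field splitting fields produced by the $\ssd$/$\gssd$ definitions need not have $l$-power degree, and the paper sidesteps this by tracking only the $l$-adic valuations of the (possibly composite) degrees $d_1, d_2$ and invoking \cite[Lemma~2.3]{HHK22} at the end. (One can force $l$-power degrees locally using that same lemma, but your proposal does not say so, and the resulting argument would still need the $L_1\otimes_F L_2$ trick to reconcile the two sides.) Finally, your concluding paragraph about ``propagating the vanishing'' to all patches of $E_j$ via residue sequences is unnecessary: once $L$ is a field component of $L_1\otimes_F L_2$, taking the normalization $\mathcal X_L\to\mathcal X$ makes every Hensel patch of $L$ a factor of $L\otimes_F F^h_\xi$ for some $\xi\in\mathcal P\cup\mathcal U$, all of which were already controlled, so Theorem~\ref{lgp_henselian} applies directly.
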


Note that as explained right after \cite[Theorem 2.9]{HHK23}, part (b) of the theorem is not interesting in the case~$i = 1$. First note that for any infinite field $F$, ~$H^1(E, \Z/l\Z) = E^{\times}/(E^{\times})^l$ is infinite for any finite extension~$E/F$. Since a non-trivial~$\Z/l\Z$-torsor over~$E$ corresponds to a field extension over~$E$ that splits only over itself,~$\gssd_l^1(F)$ must be infinity. Moreover, for the same reason,~$\ind(\alpha) = l$ for any non-trivial class ~$\alpha \in H^1(E, \Z/l\Z)$, and therefore~$\ssd_l^1(F) = 1$.

\begin{proof} 
    If~$k$ is finite, by Proposition~\ref{hensel_inj} and \cite[Remark 4.3]{HHK23},~$\coho{F}$ vanishes when~$i > 1$, so we only have to consider the case when~$k$ is infinite.

    We first prove part (b). The proof closely mirrors the proof of \cite[Proposition 4.2]{HHK23} with necessary adaptions to Hensel semi-global fields. Let~$B \subseteq \nrcoho{F}$ be a collection of classes that are unramified on~$\mathcal{X}$. By \cite[Lemma 2.3]{HHK23}, it suffices to find a finite field extension~$L/F$ such that all~$\alpha_j \in B$ split over~$L$, and~$v_l([L: F]) \leq \gssd_l^i(k) + \gssd_l^i(k(x))$. 
    Let~$\mathcal{P} \subseteq X^{\red}$ be the set of all singular points of the reduced closed fibre~$X^{\red}$ of~$\mathcal{X}$. Let~$\mathcal{U}$ be the set of connected components of the complement~$X^{\red}\backslash \mathcal{P}$.

    We fix a~$U \in \mathcal{U}$. By Lemma~\ref{delete}, we can delete finitely many points from~$U$ and add them to~$\mathcal{P}$ so that~$(\alpha_j)_{F_U^h}$ is the image of some~$\tilde{\alpha}_j \in \coho{R_U^h}$ under the natural map~$\coho{R_U^h} \to \coho{F_U^h}$. As explained in Section~\ref{subsec:patching},~$U = \operatorname{Spec}(R_U^h/I)$. By \cite[09ZI]{stacks-project},~$\coho{R_U^h} \cong \coho{U}$, and we can therefore assume~$\tilde{\alpha}_j \in \coho{U}$. Let~$\bar{\alpha}_j$ denote the image of~$\tilde{\alpha}_j$ under~$\coho{U} \to \coho{k(U)}$, where~$k(U)$ denotes the function field of~$U$. Note that $k(U)$ is a finite field extension of $k(x)$. Then there exists a finite field extension~$l_U/k(U)$ that splits~$\bar{\alpha}_j$ for all~$j$ and~$v_l([l_U: k(U)]) \leq \gssd_l^i(k(x))$. Take the separable closure~$k(U)^s$ of~$k(U)$ inside~$l_U$. 
    The degree of the purely inseparable field extension $l_U/k(U)^s$ is a power of the characteristic of~$k$, which is coprime to~$l$. Then~$k(U)^s$ must split~$\bar{\alpha}_j$ for all~$j$ by \cite[Remark 2.2]{HHK23}. Take~$V$ to be the normalization of~$U$ in~$k(U)^s$, so~$k(V) = k(U)^s$. Therefore,~$\tilde{\alpha}_j \mapsto 0$ for all~$j$ under the composition of the following maps
    \begin{equation*}
        \coho{R_U^h} \cong \coho{U} \longrightarrow \coho{k(U)} \longrightarrow \coho{k(V)},
    \end{equation*} and~$v_l([k(V): k(U)]) = v_l([l_U: k(U)]) \leq \gssd_l^i(k(x))$.

    Since~$\varinjlim_{U' \subseteq U} V \times_U U' = \varinjlim_{V' \subseteq V} V$, by \cite[Theorem 09YQ]{stacks-project},
    \begin{equation*}
        \coho{k(V)} = \coho{V} = \varinjlim_{V'\subseteq V}\coho{V'} = \varinjlim_{U'\subseteq U} \coho{V \times_U U'}.    
    \end{equation*} Therefore, there exists~$U' \subseteq U$ such that~$\tilde{\alpha}_j$ is mapped to zero in~$\coho{V \times_U U'}$. By construction,~$k(V)/k(U)$ is separable, so~$V \rightarrow U$ is generically {\'e}tale. Moreover, we may possibly delete a finite number of closed points from~$U'$ so that~$V \times_U U'$, the pullback of~$U'$ along~$V \rightarrow U$, is finite and {\'e}tale over~$U'$. Let~$J$ be the ideal of~$R_{U'}^h$ that defines~$U'$ in~$\operatorname{Spec}(R_{U'}^h)$. Then~$(R_{U'}^h, J)$ is a henselian pair. Therefore, by \cite[Lemma 09ZL]{stacks-project}, there exists a finite {\'e}tale cover~$\operatorname{Spec}(Z_{U'}) \to \operatorname{Spec}(R_{U'}^h)$ of the same degree as the closed fibre~$V \times_U U' \to U$. The following commutative diagram (note that the horizontal isomorphisms follow from \cite[Theorem 09ZI]{stacks-project}),
    \begin{center}
        \begin{tikzcd}
            &\coho{R_U^h} \arrow{r}{} & \coho{R_{U'}^h}\arrow{d}{} \arrow[r, "\cong"]&\coho{U'} \arrow{d}{}\\
            & & \coho{Z_{U'}} \arrow[r, "\cong"] &\coho{V \times_U U'}
        \end{tikzcd}
    \end{center} shows that the class~$\tilde{\alpha}_j$ is mapped to zero in~$\coho{Z_{U'}}$ for each~$j$. Since~$V$ is reduced and irreducible,~$Z_{U'}$ is an integral domain. By pulling back along the generic fibre,~$E_{U'}$, the fraction field of $Z_{U'}$, splits~$\alpha_j$ for all~$j$. By construction, the degree of the extension~$E_{U'}/F_{U'}^h$ equals the degree of the \'etale cover~$\operatorname{Spec}(Z_{U'}) \to \operatorname{Spec}(R_{U'}^h)$. Hence, by the discussion above,~$[E_{U'}: F_{U'}^h] = [k(V): k(U)]$, whence~$v_l([E_{U'}: F_{U'}^h]) = v_l([k(V): k(U)]) \leq \gssd_l^i(k(x))$.

     Note that we obtained each~$U'$ by deleting finitely many closed points from the~$U$ that we start with. Now we add these closed points to~$\mathcal{P}$ and replace~$\mathcal{U}$ with the set of connected components of the complements of~$\mathcal{P}$ in~$X^{\red}$. Let~$d_1$ be the least common multiple for the degrees~$[E_{U'}: F_{U'}^h]$ for all~$U' \in \mathcal{U'}$. By construction,~$v_l(d_1) \leq \gssd_l^i(k(x))$. Now we may take the direct sum of an appropriate number of copies of~$E_{U'}$ for each~$U'$ so that we obtain {\'e}tale~$F^h_{U'}$-algebras~$L_{U'}$ of a common degree~$d_1$. By Lemma~\ref{etale_big_patch}, there exists an {\'e}tale~$F$-algebra~$L_1$ of degree~$d_1$ such that~$L_1 \otimes_F F^h_{U'} \cong L_{U'}$ for all~$U'\in \mathcal{U}$. 
     
    Let~$P \in \mathcal{P}$. Since~$\alpha_j \in \nrcoho{F}$ is unramified, its image~$\alpha_{j,F_P^h} \in \coho{F_P^h}$ is also unramified (since the residue field~$\kappa(P)$ of~$F_P^h$ is an algebraic extension of~$k$). By \cite[Theorem 9]{Sakagaito20}, there exists~$\alpha_{j,R_P^h} \in \coho{R_P^h}$ that maps to~$\alpha_{j,F_P^h}$ under the natural map~$\coho{R_P^h} \to \coho{F_P^h}$. By \cite[Corollaire 5.5]{SGA73},~$\coho{\kappa(P)} \cong \coho{R_P^h}$. Let~$\alpha_{j,\kappa(P)}$ denote the image of~$\alpha_{j, R_P^h}$ under this isomorphism. We may find a common over field~$l_P$ of~$\kappa(P)$ that splits~$\alpha_{j, \kappa(P)}$ for all~$j$ such that~$v_l([l_P: \kappa(P)]) \leq \gssd_l^i(k)$.  Using a similar argument as in the previous paragraph, we may assume that~$l_P$ is a separable extension of~$\kappa(P)$. By \cite[Lemma 04GK]{stacks-project}, ~$l_P$ lifts to a finite {\'e}tale cover~$\Spec(Z_P) \to \Spec(R_P^h)$ of degree~$[l_P: \kappa(P)]$. Since~$R_P^h$ is a regular and local domain,~$Z_P$ must also be a domain. Let~$E_P$ be the fraction field of~$Z_P$. By pulling back along the generic fibre,~$E_P$ splits~$\alpha_j$ for all~$j$. By construction,~$[E_P: F_P^h] = [l_P: \kappa(P)]$. Hence,~$v_l([E_P: F_P^h]) = v_l([l_P: \kappa(P)]) \leq \gssd_l^i(k)$. Let~$d_2$ be the least common multiple of the degrees~$[E_P: F_P^h]$ for all $P\in \mathcal{P}$. By construction~$v_l(d_2) \leq \gssd_l^i(k)$.
    
    Now for each~$P \in \mathcal{P}$, by taking the direct sum of an appropriate number of copies of~$E_{P}$, we may obtain {\'e}tale~$F_{P}^h$-algebras~$L_{P}$ of a common degree~$d_2$. By Lemma~\ref{etale_small_patch}, there exists an {\'e}tale~$F$-algebra~$L_2$ of degree~$d_2$ such that~$L_2 \otimes_F F_{P}^h \cong L_{P}$ for all~$P \in \mathcal{P}$.

    Consider the \'etale~$F$-algebra~$L_1 \otimes_F L_2$ of degree~$d_1d_2$. It is the direct sum of copies of finite separable extensions of~$F$. 
    Since~$v_l(d_1d_2) = v_l(d_1) + v_l(d_2) \leq \gssd_l^i(k(x)) + \gssd_l^i(k)$, there exists a direct summand~$L$ of~$L_1 \otimes_F L_2$ such that~$v_l([L: F]) \leq \gssd_l^i(k(x)) + \gssd_l^i(k)$. 
    Let~$\mathcal{X}_L$ be the normalization of~$\mathcal{X}$ in~$L$, and~$X_L^{\red}$ be the reduced closed fibre of~$\mathcal{X}_L$. 
    Consider the natural map~$\mathcal{X}_L \rightarrow \mathcal{X}$. Let~$\mathcal{P}_L$ denote the preimage of the set~$\mathcal{P}$ under this map, and let~$\mathcal{U}_L$ denote the set of connected components of the complements of~$\mathcal{P}_L$ in~$X_L^{\red}$. 
    For~$P\in \mathcal{P}$,~$L \otimes_F F_P^h$ is the direct sum of the fields~$L_{\widetilde{P}}^h$, where~$\widetilde{P}$ ranges over the preimage of~$P$ under~$\mathcal{X}_L \rightarrow \mathcal{X}$ and~$L_{\widetilde{P}}^h$ denotes the Hensel patch constructed from~$\mathcal{X}_L$ at~$\widetilde{P}$. Similarly,~$L \otimes_F F_U^h$ is the direct sum of the fields~$L_{\widetilde{U}}^h$, where~$\widetilde{U}$ ranges over the preimage of~$U$ under~$\mathcal{X}_L \rightarrow \mathcal{X}$ and~$L_{\widetilde{U}}^h$ denotes the Hensel patch constructed from~$\mathcal{X}_L$ at~$\widetilde{U}$. By construction~$\alpha_j$ maps to the trivial class under the composition with the local-global map:
    \begin{equation*}
        \coho{F} \rightarrow \coho{L} \rightarrow \prod_{\xi\in \mathcal{P}_L \cup \mathcal{U}_L}\coho{L_{\xi}^h} \colon
        \alpha_j \mapsto \alpha_{j, L} \mapsto 0.
    \end{equation*} By Theorem~\ref{lgp_henselian},~$\alpha_{j, L}$ is trivial. Hence,~$L$ splits~$\alpha_j$ for all~$j$. 
    Hence, we obtain
    \begin{equation*}
        \operatorname{ind}(B)\mid l^{\gssd_l^i(k) + \gssd_l^i(k(x))}.
    \end{equation*}
    For part~$(a)$, take~$B = \{\alpha\}$. For each~$U \in \mathcal{U}$ and~$P \in \mathcal{P}$, we may argue the same as above but replace~$\gssd_l^i(k(x))$ and~$\gssd_l^i(k)$ by~$\ssd_l^i(k(x))$ and~$\ssd_l^i(k)$. 
\end{proof}

\subsection{A relative period-index bound for higher degree Galois cohomology classes}

Now we are ready to prove the main theorem of this section, which generalizes \cite[Thereom 2.9]{HHK23} to Hensel semi-global fields.

\begin{theorem}\label{main_theorem}
    Let~$K$ be an excellent henselian discretely valued field with residue field~$k$. Let~$F$ be a one-dimensional function field over~$K$. Let~$l$ be a prime that is not the characteristic of~$k$. Let~$k(x)$ denote the rational function field over~$k$. Assume~$i \geq 2$. Then
    \begin{equation*}
        \ssd_l^i(F) \leq \ssd_l^i(k) + \ssd_l^i(k(x)) + 
        \begin{cases}
            2 \textnormal{ if } l \neq 2\\
            3 \textnormal{ if } l = 2
        \end{cases}
    \end{equation*} and 
    \begin{equation*}
        \gssd_l^i(F) \leq \gssd_l^i(k) + \gssd_l^i(k(x)) + 
        \begin{cases}
            2 \textnormal{ if } l \neq 2\\
            3 \textnormal{ if } l = 2.
        \end{cases}
    \end{equation*}
\end{theorem}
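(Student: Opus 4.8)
\emph{Step 1: reduction to $F$ and to finite subsets.} The plan is to reduce, at a bounded cost in $l$-adic degree, to the unramified situation already treated in Proposition~\ref{split_unramified}; throughout, set $\epsilon = 2$ if $l\neq 2$ and $\epsilon = 3$ if $l = 2$. First I would observe that it suffices to bound, for $F$ itself and for an arbitrary finite subset $B \subseteq \coho{F}$ (resp.\ a single class $\alpha\in\coho{F}$), the index $\operatorname{ind}(B)$ (resp.\ $\operatorname{ind}(\alpha)$) by $l^{\gssd_l^i(k)+\gssd_l^i(k(x))+\epsilon}$ (resp.\ $l^{\ssd_l^i(k)+\ssd_l^i(k(x))+\epsilon}$). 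Indeed, after replacing $K$ by its algebraic closure in $F$ --- a finite extension, hence again an excellent henselian discretely valued field, with residue field finite over $k$ --- we may assume $F$ is a Hensel semi-global field, and this only shrinks the target bound since $\ssd_l^i$ and $\gssd_l^i$ do not increase under finite field extensions. Every finite extension $L/F$ is again a one-dimensional function field over such a base, so establishing the bound for all such $L$ gives the asserted inequalities for $\ssd_l^i(F)$ and $\gssd_l^i(F)$. (Here $i>1$ in the $\ssd$ case and $i>2$ in the $\gssd$ case, so that Proposition~\ref{split_unramified} applies; recall also that $\operatorname{ind}(B)$ and $\operatorname{ind}(\alpha)$ are powers of $l$ by \cite[Lemma~2.3]{HHK22}.)

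\emph{Step 2: reduction to the unramified case.} Fix $B$ (resp.\ $\alpha$). I would choose a regular model $\mathcal{X}$ of $F$ over $T$ (\cite{Lipman78}) and, after finitely many blow-ups, arrange that the union of the ramification divisors of the classes in $B$ together with the reduced closed fibre $X$ is a strict normal crossings divisor; one then decomposes the ramification locus into its vertical part, supported on $X$, and its horizontal part, whose components are flat over $\Spec T$. The plan is to mirror the ramification-killing argument for semi-global fields in the proof of \cite[Theorem~2.9]{HHK22} (following \cite{Saltman97}, \cite{HHK09}, \cite{HHK14}): a first degree-$l$ extension, adjoining an $l$-th root of a rational function $f\in F$ whose divisor meets the horizontal part transversally, removes the horizontal ramification; a second degree-$l$ extension removes the remaining vertical ramification; and when $l=2$ a third degree-$2$ extension absorbs the ramification that a square root may introduce along components where the relevant residue unit is not a square. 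This yields a finite extension $F'/F$ with $v_l([F':F])\le\epsilon$ (after possibly enlarging the base of $F'$ to its algebraic closure in $F'$) together with a regular model of $F'$ on which every class in $B_{F'}$ (resp.\ $\alpha_{F'}$) is unramified. What makes this go through over the \emph{henselian} base is that the data involved are divisorial on $\mathcal{X}$: since $\widehat{\mathcal{X}}=\mathcal{X}\times_T\widehat{T}$ has the same closed fibre, the same generic points of closed-fibre components, and the same horizontal divisors as $\mathcal{X}$, the construction over $F$ agrees with the one over $\widehat{F}$. Equivalently, one may perform the reduction over $\widehat{F}$, choose the defining functions of the resulting Kummer-type extension inside $F$ (the relevant divisor conditions live over $T$), and descend both the extension and the model, using that normalization commutes with the base change $T\to\widehat{T}$ and that unramifiedness is detected on residue fields which match up under Proposition~\ref{hensel_inj}.

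\emph{Step 3: conclusion, and the main obstacle.} Applying Proposition~\ref{split_unramified}(b) to the unramified finite set $B_{F'}$ (resp.\ part~(a) to $\alpha_{F'}$) gives $\operatorname{ind}(B_{F'})\mid l^{\gssd_l^i(k')+\gssd_l^i(k'(x))}$ (resp.\ $\operatorname{ind}(\alpha_{F'})\mid l^{\ssd_l^i(k')+\ssd_l^i(k'(x))}$), where $k'$ is the residue field of the base of $F'$; since $k'/k$ and $k'(x)/k(x)$ are finite, the exponents on the right are at most $\gssd_l^i(k)+\gssd_l^i(k(x))$ (resp.\ $\ssd_l^i(k)+\ssd_l^i(k(x))$). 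Combining this with the elementary tower inequality $\operatorname{ind}_F(B)\mid[F':F]\cdot\operatorname{ind}_{F'}(B_{F'})$, with $v_l([F':F])\le\epsilon$, and with the fact that $\operatorname{ind}_F(B)$ is a power of $l$, one obtains $\operatorname{ind}_F(B)\mid l^{\gssd_l^i(k)+\gssd_l^i(k(x))+\epsilon}$ (resp.\ $\operatorname{ind}_F(\alpha)\mid l^{\ssd_l^i(k)+\ssd_l^i(k(x))+\epsilon}$), which by Step~1 is exactly what is needed. I expect the real difficulty to be concentrated in Step~2: running the Saltman-style ramification reduction over a henselian rather than complete base while keeping the exponent sharp, which forces one to track the comparison between $\mathcal{X}$ and $\widehat{\mathcal{X}}$, the compatibility of residue maps under $F\to\widehat{F}$, and the regularity of the model after the auxiliary extensions --- with the $l=2$ case, where the extra degree-$2$ extension is genuinely needed, being the most delicate point.
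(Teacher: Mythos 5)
Your plan follows the same route as the paper: kill the ramification of $B$ at a cost of $l^2$ (or $l^3$ when $l=2$) and then apply Proposition~\ref{split_unramified}, with separate treatment of finite extensions $E/F$ for the \emph{stable} dimensions. The one substantive difference is in Step~2: the paper does not re-derive the Saltman-style ramification reduction but simply invokes \cite[Proposition~3.1]{Gosavi22}, which packages exactly the statement you want (a finite extension $L/F$ of degree $l^2$, resp.\ $l^3$, together with a regular model $\mathcal{Y}\to\mathcal{X}$ on which the classes become unramified), and then notes that $\mathcal{Y}$ is again a $T$-curve because the proof there shows $L/F$ is transcendental over $K$. In particular, the descent-from-$\widehat{F}$ machinery you contemplate in Step~2 is not needed: Gosavi's construction is purely geometric on the regular two-dimensional model $\mathcal{X}$ (blow-ups, choice of a function $f$ with prescribed divisor relative to the snc ramification locus, normalization in $F(f^{1/l})$), so it works verbatim when $T$ is only henselian rather than complete --- the model, its closed fibre, and the divisor combinatorics are the same as for $\widehat{\mathcal{X}}$, and no comparison with $\widehat{F}$ has to be invoked at this stage. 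So the obstacle you flagged as the ``real difficulty'' is in fact not one; the paper's proof is correspondingly shorter, and your Step~1 reduction to a Hensel semi-global base and your Step~3 bookkeeping (tower inequality, monotonicity of $\ssd$ and $\gssd$ under finite extensions, the remark that indices are $l$-powers) match the paper's argument, including its closing paragraph handling arbitrary finite extensions $E/F$ with residue field $k_E$ finite over $k$.
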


\begin{proof}
    We first prove the statement for the generalized stable splitting dimension. Let~$B \subseteq \coho{F}$ be a collection of finitely many classes. By \cite[Proposition 3.1]{Gosavi22}, there exists a finte field extension~$L/F$ of degree~$l^2$ (resp.~$l^3$) for~$l \neq 2$ (resp.~$l = 2$) which corresponds to a morphism~$\mathcal{Y} \to \mathcal{X}$ for some regular model~$\mathcal{Y}$ of~$L$ such that~$\alpha_L \in H^i_{\textnormal{nr}}(L, \mu_l^{\otimes i-1})^{\mathcal{Y}}$. Here~$\mathcal{Y}$ is indeed a~$T$-curve as the construction of $L$ in the proof of \cite[Proposition 3.1]{Gosavi22} does not introduce algebraic elements over $K$.
    By Proposition~\ref{split_unramified} and also the definition of the general splitting dimension, there exists a finite extension~$L'/L$ that splits all~$\alpha\in B$ and~$v_l([L': L]) \leq \gssd_l^i(k(x)) + \gssd_l^i(k)$, so
    \begin{equation*}
        v_l([L': F]) \leq \gssd_l^i(k) + \gssd_l^i(k(x)) + 
        \begin{cases}
            2 \textnormal{ if } l \neq 2\\
            3 \textnormal{ if } l = 2.
        \end{cases}
    \end{equation*}
    
    We also have to consider cohomology classes in~$\coho{E}$ for all finite extensions ~$E/F$ to bound the generalized stable splitting dimension. Let~$B \subseteq \coho{E}$. Note that~$E$ is the function field of a curve defined over a finite algebraic extension of~$K$ whose residue field ~$k_E$ is also an algebraic extension over~$k$. By an argument as in the previous paragraph, we can show that there exists a finite field extension~$L/E$ such that~$L$ is a splitting field for~$B$ with the same residue field as that of~$E$, and 
    \begin{align*}
        v_l([L: E]) &\leq \gssd_l^i(k_E) + \gssd_l^i(k_E(x)) + 
        \begin{cases}
            2 \textnormal{ if } l \neq 2\\
            3 \textnormal{ if } l = 2
        \end{cases}\\
        &\leq \gssd_l^i(k) + \gssd_l^i(k(x)) + 
        \begin{cases}
            2 \textnormal{ if } l \neq 2\\
            3 \textnormal{ if } l = 2,
        \end{cases}
    \end{align*} whence the assertion for~$\gssd_l^i(F)$ follows.

    For the part concerning the stable splitting dimension, we may take~$B = \{\alpha\}$ and repeat the argument as above.
\end{proof}

\subsection{Iterative bounds for higher rank excellent henselian discretely valued fields} We have shown in the previous section that~$\gssd_l^i(F)$ is determined by the underlying geometry from the closed fibre of the model~$\mathcal{X}$ of~$F$. It is not hard to see that~$\gssd_l^i(F)$ is an iterative bound and therefore we may study such a bound for one-variable function fields~$F$ over ``higher rank'' excellent henselian discretely valued fields~$k_r$. Here~$k_r$ is constructed iteratively from a sequence of fields~$k_0, \dots, k_r$, where for each~$i \geq 1$,~$k_i$ is an excellent henselian discretely valued field with residue field~$k_{i-1}$. The results in this subsection mostly follows from \cite[Section 6]{HHK23} with necessary adaptions to henselian fields.


\begin{lemma}\label{iterative_lemma}
    Let~$k$ be a field and char$(k) \neq l$, where~$l$ is prime. Let the sequence of fields~$k_0\coloneqq k, k_1, \dots, k_r$ be constructed as above. Then for any finite collection of classes~$B \subseteq \coho{k_r}$, there exists an extension~$L/k_r$ of degree dividing~$l^{\gssd_l^i(k) + r}$ that splits~$B$. That is,~$\gssd_l^i(k_r)\leq \gssd_l^i(k) + r$. In particular, ~$\ssd_l^i(k_r)\leq \ssd_l^i(k) + r$.
\end{lemma}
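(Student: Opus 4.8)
The plan is to reduce everything to the one-step estimate
$\gssd_l^i(K)\le\gssd_l^i(\kappa)+1$, valid whenever $K$ is the fraction field of an excellent henselian discrete valuation ring with residue field $\kappa$, and then iterate it down the tower $k_r,k_{r-1},\dots,k_0=k$: since each $k_j$ is by construction the fraction field of an excellent henselian DVR with residue field $k_{j-1}$, one gets $\gssd_l^i(k_r)\le\gssd_l^i(k_{r-1})+1\le\cdots\le\gssd_l^i(k)+r$, and tracking a single class throughout gives the statement for $\ssd_l^i$. To prove the one-step estimate I must bound $\operatorname{ind}(B)$ for every finite extension $L/K$ and every finite $B\subseteq\coho{L}$; the point is that such an $L$ is again of the same shape, its valuation ring being the integral closure $S$ of the valuation ring $R\subseteq K$ in $L$, which is finite over $R$ by excellence and local (hence a DVR, again excellent and henselian) by henselianness, with residue field $\lambda$ finite over $\kappa$, so $\gssd_l^i(\lambda)\le\gssd_l^i(\kappa)$. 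It therefore suffices, for a fixed such pair $(L,B)$ with uniformizer $\pi$ and residue field $\lambda$, to produce a splitting field of $B$ of degree dividing $l^{\gssd_l^i(\lambda)+1}$.

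For this I would invoke the symbol decomposition of Lemma~\ref{symbol}: write each $\alpha_j\in B$ uniquely as $\alpha_j=\alpha_j'+(\pi)\cup\beta_j$ with $\alpha_j'\in\coho{\lambda}$ and $\beta_j\in H^{i-1}(\lambda,\mu_l^{\otimes i-2})$. First, split the unramified parts: by the definition of $\gssd_l^i(\lambda)$ there is a finite extension of $\lambda$ of $l$-power degree dividing $l^{\gssd_l^i(\lambda)}$ splitting all the $\alpha_j'$, and exactly as in the proof of Proposition~\ref{split_unramified} (using \cite[Remark 2.2]{HHK22}) we may replace it by its separable part $\lambda'/\lambda$, which still splits the $\alpha_j'$ and has the same $l$-adic degree; then lift $\lambda'$ to the unramified extension $L'/L$ with residue field $\lambda'$ and $[L':L]=[\lambda':\lambda]$. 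Here $\pi$ remains a uniformizer, and by functoriality of the residue/inflation exact sequence of \cite[7.9]{GMS03} along this unramified base change the $\coho{\lambda}$-part of $(\alpha_j)_{L'}$ is the image of $\alpha_j'$ in $\coho{\lambda'}$, hence vanishes; so $(\alpha_j)_{L'}=(\pi)\cup(\beta_j)_{L'}$ for all $j$. Second, kill the remaining symbol by one radical: since $\pi$ has valuation $1$ it is not an $l$-th power in $L'$, so $L'':=L'(\pi^{1/l})$ is a degree-$l$ extension of $L'$, and the Kummer class $(\pi)\in H^1(L',\mu_l)$ (defined since $l$ is invertible in all our fields) restricts to $0$ in $H^1(L'',\mu_l)$ because $\pi$ becomes an $l$-th power. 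Hence $(\alpha_j)_{L''}=(\pi)_{L''}\cup(\beta_j)_{L''}=0$ for every $j$, so $L''$ splits $B$ and $[L'':L]=l\cdot[\lambda':\lambda]$ divides $l^{\gssd_l^i(\lambda)+1}$. Using $\ssd$ in place of $\gssd$ at the step where the $\alpha_j'$ are split gives the single-class version.

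The places requiring care are: (i) that finite extensions of the fields in our tower remain fraction fields of excellent henselian DVRs — the standard use of excellence (finiteness of the normalization) plus henselianness (locality of the normalization) indicated above, which also explains why $\operatorname{char}\neq l$ persists up the tower; (ii) the functoriality of the decomposition of Lemma~\ref{symbol} along the unramified extension $L\to L'$, which follows from functoriality of the residue exact sequence for unramified extensions of henselian discretely valued fields and, as in the proof of Lemma~\ref{symbol}, is read off after passing to completions via \cite[Proposition 3.5.3]{GGMB14}; and (iii) the reduction to a separable residue extension $\lambda'/\lambda$, which is forced because the unramified lift only exists for separable residue extensions and is handled precisely as in Proposition~\ref{split_unramified}. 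I expect (ii) to be the only genuinely delicate point, and even it is routine once one descends to the completion; the heart of the argument — lift unramified to clear the unramified part, then adjoin a single $l$-th root of the uniformizer to clear the leftover symbol — is clean and costs exactly one extra factor of $l$ per step.
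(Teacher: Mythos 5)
Your proof is correct and follows essentially the same strategy as the paper's: reduce by induction to one step, use the symbol decomposition $\alpha=\alpha'+(\pi)\cup\beta$ from Lemma~\ref{symbol}, kill $\alpha'$ with an unramified extension lifted from a splitting field of the residue classes, and kill $(\pi)$ by adjoining $\pi^{1/l}$, picking up one extra factor of $l$ per step. The only differences are cosmetic or an improvement: you perform the unramified and radical extensions in the opposite order and then note the compatibility of the residue/inflation sequence under unramified base change where the paper simply cites ``functoriality''; and you explicitly address the ``stable'' part of the definition by observing that any finite extension $L/k_1$ is again the fraction field of an excellent henselian DVR whose residue field is finite over $k$ (finiteness of normalization from excellence, locality from henselianness), with $\gssd_l^i$ of the residue field bounded by $\gssd_l^i(k)$ — a point the paper leaves implicit.
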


\begin{proof} 
    By induction, we may reduce to the case of~$r = 1$. Let~$K \coloneqq k_1$ with valuation~$v$. Let~$\mathcal{O}_v$ denote the valuation ring of~$K$ with uniformizer~$\pi$. By \cite[Exp. XII, Corollaire 5.5]{SGA73}, the reduction map~$\coho{\mathcal{O}_v} \rightarrow \coho{k}$ is an isomorphism of groups, for~$i \geq 1$, so we identify these two groups from now on. By \cite[Proposition 1.4.6]{CTS21} (see also \cite[7.9]{GMS03}), for any~$\alpha \in \coho{K}$,~$\alpha = \alpha' + (\pi) \cup \beta$, where~$\alpha' \in \coho{\mathcal{O}_v}$ and~$\beta \in H^{i-1}(k, \mu_l^{\otimes i-2})$. Since~$\widetilde{K}\coloneqq K(\pi^{1/l})$ splits~$(\pi)$,~$(\alpha)_{\widetilde{K}} \sim (\alpha')_{\widetilde{K}} \in \coho{\widetilde{K}}$ by functoriality.

    Now consider a finite collection~$B = \{\alpha_1, \dots, \alpha_m\}\subseteq \coho{K}$ and the corresponding~$B' = \{\alpha_1', \dots, \alpha_m'\}$, where each~$\alpha_j'$ is obtained from~$\alpha_j$ as shown above. By definition, there exists a field extension~$k'/k$ that splits~$B'$ such that~$v_l([k': k]) \leq \gssd_l^i(k)$. Therefore, it suffices to show that there exists a splitting field~$\widetilde{K}'/K$ such that~$[\widetilde{K}': K] \mid l[k': k]$. By the assumption on char$(k)$, any class in~$B$ splits over a separable closure of~$k$ in~$k'$. Therefore, we may assume~$k'/k$ is separable. Hence, we may construct an unramified extension~$\mathcal{O}_v'$ of~$\mathcal{O}_v$ with residue field~$k'$ and the degree of extension is exactly~$[k': k]$. Let~$K'$ denote the fraction field of~$\mathcal{O}_v'$. By \cite[Exp. XII, Corollaire 5.5]{SGA73} again,~$(\alpha_j')_{\mathcal{O}_v'}$ splits, then so does~$(\alpha_j')_{K'}$. Now let~$\widetilde{K}'\coloneqq \widetilde{K}K'$. Then~$(\alpha_j')_{\widetilde{K}'} = (\alpha_j)_{\widetilde{K}'} = 0$. Moreover, by construction,~$[\widetilde{K}': K] \mid l[k': k]$. Then we have shown that~$\gssd_l^i(k_1) \leq \gssd_l^i(k) + 1$. If~$B$ consists only one class, we may replace~$\gssd_l^i(k)$ by~$\ssd_l^i(k)$ and repeat the same argument as above.
\end{proof}

Combining this lemma with Theorem~\ref{main_theorem}, we get the following iterative bound for rational function fields over a higher rank excellent henselian discretely valued field.

\begin{theorem}\label{iterative_theorem}
    Let notation be as in Lemma~\ref{iterative_lemma}. Then
    \begin{equation*}
        \gssd_l^i(k_r(x)) \leq \gssd_l^i(k(x)) + r\gssd_l^i(k) + 
        \begin{cases}
            \frac{r}{2}(r + 3) \textnormal{ if } l \neq 2\\
            \frac{r}{2}(r + 5) \textnormal{ if } l = 2.
        \end{cases}
    \end{equation*} The same holds if we replace~$\gssd_l^i(-)$ by~$\ssd_l^i(-)$.
\end{theorem}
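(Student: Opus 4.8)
The plan is to run a direct induction on $r$, peeling off one henselian valuation at a time and feeding the conclusion of Theorem~\ref{main_theorem} back into itself, with Lemma~\ref{iterative_lemma} supplying the bound on the auxiliary residue-field term. Write $c := 2$ if $l \neq 2$ and $c := 3$ if $l = 2$, and set $a_j := \gssd_l^i(k_j(x))$. The base case $r = 0$ is the tautology $a_0 = \gssd_l^i(k(x))$.

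For the inductive step, note that $k_r$ is an excellent henselian discretely valued field whose residue field is $k_{r-1}$, and that $k_r(x)$ is a one-dimensional function field over $k_r$; so Theorem~\ref{main_theorem} applies and gives $a_r \leq \gssd_l^i(k_{r-1}) + a_{r-1} + c$. One small point to check before invoking it: the theorem requires $l$ to differ from the characteristic of the relevant residue field $k_{r-1}$. This persists along the whole tower, because iterating the residue-field construction cannot create characteristic $l$ — a downward induction shows $\operatorname{char}(k_j) \in \{0, \operatorname{char}(k)\}$ for every $j$, and $\operatorname{char}(k) \neq l$ by hypothesis, so $\operatorname{char}(k_{r-1}) \neq l$. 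Applying Lemma~\ref{iterative_lemma} to the truncated tower $k_0, \dots, k_{r-1}$ gives $\gssd_l^i(k_{r-1}) \leq \gssd_l^i(k) + (r-1)$, hence the recursion $a_r \leq a_{r-1} + \gssd_l^i(k) + (r-1) + c$.

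Unrolling this recursion from $a_0 = \gssd_l^i(k(x))$ yields
\[
a_r \;\leq\; \gssd_l^i(k(x)) + r\,\gssd_l^i(k) + \sum_{j=1}^{r}(j-1) + rc \;=\; \gssd_l^i(k(x)) + r\,\gssd_l^i(k) + \frac{r(r-1)}{2} + rc,
\]
and since $\frac{r(r-1)}{2} + rc = \frac{r(r-1+2c)}{2}$ equals $\frac{r(r+3)}{2}$ when $c = 2$ and $\frac{r(r+5)}{2}$ when $c = 3$, this is exactly the asserted bound. The statement for $\ssd_l^i$ follows by the identical argument, restricting throughout to singleton collections $B$ and using the $\ssd$-versions of Theorem~\ref{main_theorem} and Lemma~\ref{iterative_lemma}.

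As for the main obstacle: there is no serious one — the argument is a bookkeeping induction. The only places demanding any care are verifying that the hypotheses of Theorem~\ref{main_theorem} (excellent henselian discretely valued base, one-variable function field, $l$ prime to the residue characteristic) propagate up the tower, and checking that the arising triangular sum collapses to the stated closed form; both are routine once set up as above.
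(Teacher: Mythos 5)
Your proof is correct and follows essentially the same route as the paper: apply Theorem~\ref{main_theorem} to the one-step tower $k_j / k_{j-1}$ to get the recursion, bound the residue-field term via Lemma~\ref{iterative_lemma}, and telescope the resulting inequalities. The only cosmetic difference is that you phrase it as an induction unrolled from the base case while the paper directly sums the difference inequalities from $j=1$ to $j=r$; your indexing (using $\gssd_l^i(k_{r-1})$ as the residue-field term coming out of Theorem~\ref{main_theorem}) is the careful reading, and it is what makes the bound $\gssd_l^i(k)+(j-1)$ from Lemma~\ref{iterative_lemma} line up correctly.
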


\begin{proof}
    By Theorem~\ref{main_theorem},~$\gssd_l^i(k_{j}(x)) \leq \gssd_l^i(k_j) + \gssd_l^i(k_{j-1}(x)) + \delta$, where~$\delta = 2$ if~$l \neq 2$ and~$\delta = 3$ if~$l = 2$. Then by \Cref{iterative_lemma},
    \begin{equation*}
        \gssd_l^i(k_j(x)) - \gssd_l^i(k_{j-1}(x)) \leq \gssd_l^i(k_j)  + \delta \leq \gssd_l^i(k) + (j - 1) + \delta.
    \end{equation*} By taking the sum of both sides of the inequalities from~$j = 1$ to~$j = r$ and an easy calculation, we obtain the desired conclusion. The same argument applies if we replace~$\gssd_l^i(-)$ by~$\ssd_l^i(-)$. We refer the reader to \cite[Theorem 6.3]{HHK23} for details.
\end{proof}

Now we study the bound for the generalized splitting dimension as the cohomological degree varies, taking into account the cohomological dimension of~$k$. In general, we can
only make statements for the case that the cohomological dimension is finite, leveraging the fact that~$\gssd^m_l(k) = 0$ for~$m > \operatorname{cd}_l(k)$.

\begin{theorem}
    Let notation be as in Lemma~\ref{iterative_lemma}. Let~$c \coloneqq \cd_l(k)$. Then 
    \begin{equation*}
        \gssd_l^{c + m}(k_r) \leq \max(0, r - m + 1) \textnormal{ for } m \geq 1,
    \end{equation*} and 
    \begin{equation*}
        \gssd_l^{c + m}(F) \leq
        \begin{cases}
            \frac{1}{2}r(r - 1) + r\delta + \gssd_l^{c + 1}(k(x)) & \textnormal{ for } m = 1,\\
            \frac{1}{2}(r - m + 1)(r - m) + (r - m + 2)\delta & \textnormal{ for } 2 \leq m \leq r + 1,\\
            0 & \textnormal{ for } m > r + 1,
        \end{cases}
    \end{equation*} for any one-variable function field~$F$ over~$k_r$, where~$\delta = 2$ if~$l$ is odd and~$\delta = 3$ if~$l = 2$. The same holds if we replace~$\gssd_l^i(-)$ by~$\ssd_l^i(-)$.
\end{theorem}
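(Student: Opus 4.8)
The plan is to combine the iterative bound of Theorem~\ref{iterative_theorem} with the vanishing observation $\gssd_l^m(k_s)=0$ for $m>\cd_l(k_s)$, together with the behavior of cohomological dimension under the tower construction. The key input is that $\cd_l(k_s)\le \cd_l(k)+s$, since each $k_s$ is henselian discretely valued with residue field $k_{s-1}$ (so $\cd_l(k_s)\le \cd_l(k_{s-1})+1$ by the standard dimension shift for henselian/complete discretely valued fields). Hence $\cd_l(k_s)\le c+s$, and in particular $\gssd_l^{c+m}(k_s)=0$ whenever $c+m>c+s$, i.e.\ whenever $s<m$.

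First I would establish $\gssd_l^{c+m}(k_r)\le\max(0,r-m+1)$ by induction on $r$, using Lemma~\ref{iterative_lemma} in the form $\gssd_l^{c+m}(k_r)\le\gssd_l^{c+m}(k_{r-1})+1$, but with the refinement that when $r\le m-1$ one has $\gssd_l^{c+m}(k_r)=0$ directly from the cohomological dimension bound above. For $r\ge m$, telescoping from the base case $\gssd_l^{c+m}(k_{m-1})=0$ gives $\gssd_l^{c+m}(k_r)\le r-(m-1)=r-m+1$. This yields the first displayed inequality.

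Next, for the bound on $\gssd_l^{c+m}(F)$ with $F$ a one-variable function field over $k_r$: by Theorem~\ref{main_theorem} applied to $F$ over $k_r$, $\gssd_l^{c+m}(F)\le\gssd_l^{c+m}(k_r)+\gssd_l^{c+m}(k_r(x))+\delta$. For the middle range $2\le m\le r+1$, I would substitute the already-proved bound $\gssd_l^{c+m}(k_r)\le r-m+1$ and the specialization of Theorem~\ref{iterative_theorem} at cohomological degree $c+m$: running the telescoping argument of Theorem~\ref{iterative_theorem} but starting the sum at $j=m-1$ (where $\gssd_l^{c+m}(k_{m-2}(x))=0$ because $\cd_l(k_{m-2}(x))\le\cd_l(k_{m-2})+1\le c+m-1<c+m$), one gets $\gssd_l^{c+m}(k_r(x))\le\sum_{j=m-1}^{r}(\gssd_l^{c+m}(k_j)+\delta)\le\sum_{j=m-1}^r((j-m+1)+\delta)=\tfrac{1}{2}(r-m+2)(r-m+1)+(r-m+2)\delta$. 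Combining and simplifying (absorbing the extra $+\delta$ and the $\le r-m+1$ term) gives the stated $\tfrac12(r-m+1)(r-m)+(r-m+2)\delta$. For $m=1$ the term $\gssd_l^{c+1}(k(x))$ need not vanish, so one keeps it and runs the full telescope of Theorem~\ref{iterative_theorem} at degree $c+1$, giving $\tfrac12 r(r-1)+r\delta+\gssd_l^{c+1}(k(x))$; for $m>r+1$ everything in sight has cohomological degree exceeding $\cd_l$, so the bound is $0$. The $\ssd$-version is identical, replacing $\gssd$ by $\ssd$ throughout and using the single-class statements in Theorem~\ref{main_theorem}, Theorem~\ref{iterative_theorem}, and Lemma~\ref{iterative_lemma}.

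**The main obstacle** I expect is bookkeeping: making sure the index shifts in the telescoping sums are correct so that the summation bounds match the claimed closed forms exactly (in particular getting the arguments $(r-m+1)(r-m)$ versus $(r-m+2)(r-m+1)$ right after absorbing the leftover $\gssd_l^{c+m}(k_r)\le r-m+1$ term and one copy of $\delta$), and carefully justifying $\cd_l(k_s(x))\le\cd_l(k_s)+1$ and $\cd_l(k_s)\le c+s$ so that all the vanishing claims that truncate the sums are legitimate. None of the individual steps is deep — the content is entirely in Theorem~\ref{main_theorem} and Theorem~\ref{iterative_theorem} — but the arithmetic of where each geometric series starts must be handled with care.
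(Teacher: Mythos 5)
Your overall plan is the same as the paper's: show $\cd_l(k_s)\le c+s$ and $\cd_l(k_s(x))\le c+s+1$, truncate the iterative bounds at the degree where things vanish, and telescope. But there is a genuine off-by-one error in your application of Theorem~\ref{main_theorem}, and the ``absorbing'' step you invoke to fix the arithmetic does not actually work.

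Theorem~\ref{main_theorem} bounds $\gssd_l^i(F)$ for $F$ a one-variable function field over a henselian discretely valued field $K$ in terms of the \emph{residue field} $k$ of $K$, not $K$ itself: it gives $\gssd_l^i(F)\le\gssd_l^i(k)+\gssd_l^i(k(x))+\delta$. With $K=k_r$ the residue field is $k_{r-1}$, so the correct inequality is
\begin{equation*}
\gssd_l^{c+m}(F)\le\gssd_l^{c+m}(k_{r-1})+\gssd_l^{c+m}(k_{r-1}(x))+\delta,
\end{equation*}
not $\gssd_l^{c+m}(F)\le\gssd_l^{c+m}(k_r)+\gssd_l^{c+m}(k_r(x))+\delta$ as you wrote. (The same substitution of $k_j$ for $k_{j-1}$ appears inside your telescoping sum.) If you carry through your version, you get
$\gssd_l^{c+m}(F)\le(r-m+1)+\tfrac12(r-m+1)(r-m+2)+(r-m+3)\delta=\tfrac12(r-m+1)(r-m+4)+(r-m+3)\delta$,
which exceeds the claimed bound $\tfrac12(r-m+1)(r-m)+(r-m+2)\delta$ by $2(r-m+1)+\delta$, a quantity that is strictly positive in the range $2\le m\le r+1$. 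These surplus terms cannot simply be ``absorbed'': you cannot drop positive summands from an upper bound.

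When the indices are corrected, the arithmetic closes exactly. With $\gssd_l^{c+m}(k_{r-1})\le\max(0,r-m)$, the telescope $\gssd_l^{c+m}(k_j(x))-\gssd_l^{c+m}(k_{j-1}(x))\le\gssd_l^{c+m}(k_{j-1})+\delta$ (again: residue field $k_{j-1}$, not $k_j$) starting from $\gssd_l^{c+m}(k_{m-2}(x))=0$ gives
$\gssd_l^{c+m}(k_{r-1}(x))\le\tfrac12(r-m-1)(r-m)+(r-m+1)\delta$,
and then
$\gssd_l^{c+m}(F)\le(r-m)+\tfrac12(r-m-1)(r-m)+(r-m+2)\delta=\tfrac12(r-m+1)(r-m)+(r-m+2)\delta$,
as required; the $m=1$ and $m>r+1$ cases work out similarly. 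Your first displayed bound $\gssd_l^{c+m}(k_r)\le\max(0,r-m+1)$ and the vanishing observations about $\cd_l$ are fine. You should also make explicit that passing the cohomological-dimension shift from the complete to the henselian setting requires the bijectivity of $H^i(k_{s},G)\to H^i(\widehat{k_{s}},G)$ (this is the role of \cite[Proposition 3.5.3]{GGMB14} in the paper's argument), rather than leaving it as ``standard.''
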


\begin{proof} Let~$\widehat{k}_r$ denote the completion of~$k_r$. By \cite[Proposition 3.5.3]{GGMB14},~$H^i(k_r, G) \to H^i(\widehat{k}_r, G)$ is bijective for any commutative group~$G$. Therefore,~$\operatorname{cd}_l(k_r) = \operatorname{cd}_l(\widehat{k}_r)$, for any prime~$l$. Hence, by \cite[Proposition II.4.3.12]{serre97}, we may inductively show that~$\cd_l(k_r) = \cd_l(k) + r = c + r$. Therefore, for~$m > r$,~$\gssd_l^{c + m}(k_r) = 0$. For~$m \leq r$,~$\gssd_l^{c + m}(k_{j}) = 0$ for~$j \leq m - 1$. Similarly, by \cite[Proposition II.4.2.11]{serre97},~$\cd_l(k_r(x)) = c + r + 1$. The rest of the calculation follows from \cite[Theorem 6.4]{HHK23}.
\end{proof}

\section{The Period-index bound for coefficient groups of non-prime order}\label{sec_non_prime}
In the previous section, we studied the period-index bound for the higher degree Galois cohomology group~$\coho{F}$, where~$l$ is prime. Now we study the same problem for coefficient groups of non-prime orders.
Throughout this section, we assume that char$(F) \nmid m$, where~$m$ is a positive integer.

\subsection{The period-index problem for $\cohom{F}{m}$}

\begin{definition}\label{period}
  Let~$\alpha$ be a non-trivial element in~$\cohom{F}{m}$. The \emph{period} of~$\alpha$, denoted by~$\per(\alpha)$, is the smallest integer~$n$ such that~$n\alpha \sim 0 \in \cohom{F}{m}$. 
\end{definition}

When~$m = l$ is a prime number, the period of a non-trivial class in~$\coho{F}$ is always~$l$. The period-index problem is well-defined for prime coefficients since~$\ind(\alpha)$ always divides a power of~$l$ (see \cite[Lemma 2.3]{HHK23}). Now we show that the period-index problem is also well-defined when~$m$ is not necessarily prime. We start by proving the case that~$m$ is a power of a prime number.

\begin{lemma}\label{index}
  Let~$F$ be a field and assume char$(F)$ is not divisible by the prime number~$p$. Let~$m = p^n$ for some positive integer~$n$. Let~$\alpha \in H^i(F, \mu_m^{\otimes i - 1})$. Then there exists a field extension~$L$ over~$F$ with degree a power of~$p$ that splits~$\alpha$. In particular, the index of~$\alpha$ is a power of~$p$ and therefore~$\ind(\alpha) \mid \per(\alpha)^s$ for some integer~$s$.
\end{lemma}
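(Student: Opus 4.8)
The plan is to induct on $n$, reducing the case $m = p^n$ to the prime case $m = p$, which is known (the prime-coefficient situation, \cite{HHK22}); the period--index divisibility will then be purely formal. Indeed, once $\ind(\alpha) = p^e$ is known, the period $\per(\alpha)$ divides $m = p^n$ and hence equals $p^j$ for some $j$ with $1 \le j \le n$ (with $j \ge 1$ because $\alpha$ is nontrivial), so $\ind(\alpha) = p^e$ divides $p^{ej} = \per(\alpha)^e$, and $s = e$ works.

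For the inductive step ($n \ge 2$), I would use the short exact sequence of $\operatorname{Gal}(F^{\mathrm{sep}}/F)$-modules
\[
0 \longrightarrow \mu_{p^{n-1}}^{\otimes i-1} \longrightarrow \mu_{p^{n}}^{\otimes i-1} \longrightarrow \mu_{p}^{\otimes i-1} \longrightarrow 0,
\]
obtained by tensoring the inclusion $\mu_{p^{n-1}} \hookrightarrow \mu_{p^{n}}$, whose cokernel is $\mu_p$, with the invertible Galois module $\mu_{p^{n}}^{\otimes i-2}$; one checks that short exactness is preserved and that the three terms are indeed the asserted twists. Write $\bar\alpha \in H^{i}(F, \mu_{p}^{\otimes i-1})$ for the image of $\alpha$ under the induced map. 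By the prime case there is a finite extension $L_1/F$ of $p$-power degree over which $\bar\alpha$ becomes trivial; the long exact cohomology sequence over $L_1$ then exhibits $\alpha_{L_1}$ as the image of some $\beta \in H^{i}(L_1, \mu_{p^{n-1}}^{\otimes i-1})$. Applying the inductive hypothesis to $\beta$ produces a finite extension $L_2/L_1$ of $p$-power degree with $\beta_{L_2} = 0$, whence $\alpha_{L_2} = 0$; since $[L_2 : F] = [L_2 : L_1][L_1 : F]$ is a power of $p$, taking $L = L_2$ proves the statement, and in particular $\ind(\alpha) \mid [L:F]$ is a power of $p$.

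If one only needs the index assertion, there is a more direct route avoiding the d\'evissage: for each prime $q \ne p$ the restriction map $H^{i}(F, \mu_m^{\otimes i-1}) \to H^{i}(F_q, \mu_m^{\otimes i-1})$ is zero, where $F_q$ denotes the fixed field of a Sylow pro-$q$ subgroup of $\operatorname{Gal}(F^{\mathrm{sep}}/F)$. Indeed $H^{i}(F_q, \mu_m^{\otimes i-1})$ is a filtered colimit of groups $H^{i}(Q, \mu_m^{\otimes i-1})$ with $Q$ a finite $q$-group, and each of these is annihilated both by a power of $q$ and by $m = p^n$, hence vanishes for $i \ge 1$. Since Galois cohomology with finite coefficients commutes with filtered colimits of fields, $\alpha$ already becomes trivial over a finite subextension $E/F$ of $F_q/F$, which is necessarily of degree prime to $q$; thus $q \nmid \ind(\alpha)$, and letting $q$ vary shows $\ind(\alpha)$ is a power of $p$. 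The main obstacle — and the reason the d\'evissage is preferable for the full statement — is producing a single splitting field of $p$-power degree rather than merely bounding the index; there, the only real work is checking the short exact sequence with its Tate twists and carrying the single-field statement through the induction.
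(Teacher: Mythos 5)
Your main argument is essentially the paper's: the same d\'evissage, inducting on $n$ via a Bockstein-type short exact sequence of cyclotomic twists (you use $0 \to \mu_{p^{n-1}}^{\otimes i-1} \to \mu_{p^n}^{\otimes i-1} \to \mu_p^{\otimes i-1} \to 0$, the paper uses the mirror sequence $0 \to \mu_p^{\otimes i-1} \to \mu_{p^{k+1}}^{\otimes i-1} \to \mu_{p^k}^{\otimes i-1} \to 0$, which merely swaps the order in which the base case and the inductive hypothesis are invoked). Your alternative Sylow-pro-$q$ argument for the index bound alone is a clean, more group-theoretic way to see that $\ind(\alpha)$ is a $p$-power, and you are right that it does not by itself produce a single $p$-power splitting field, which is why the d\'evissage is the appropriate route for the statement as written.
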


\begin{proof}
  We prove the lemma by inducting on the exponent~$n$ of~$p$. By \cite[Lemma 2.3]{HHK23}, the statement is true for~$n = 1$. Suppose the statement holds for~$n = k$. That is, there exists a splitting field of degree~$\operatorname{ind}(\alpha)$, which is a power of~$p$, for all~$\alpha \in H^i(F, \mu_{p^k}^{\otimes i - 1})$.

  For~$n = k + 1$, consider the short exact sequence of Galois cohomology groups:
  \begin{equation*}
    \cdots \longrightarrow H^{i}(F, \mu_p^{\otimes i - 1}) \overset{\iota}{\longrightarrow} H^i(F, \mu_{p^{k + 1}}^{\otimes i - 1}) \overset{\pi}{\longrightarrow} H^i(F, \mu_{p^k}^{\otimes i - 1})\longrightarrow \cdots
  \end{equation*}  induced by the exact sequence of Galois modules $0 \longrightarrow\mu_p \longrightarrow \mu_{p^{k + 1}} \longrightarrow \mu_{p^k} \longrightarrow 0 $.
  
  Let~$\alpha$ be a non-trivial element in~$H^i(F, \mu_{p^{k + 1}}^{\otimes i - 1})$ and~$\beta\coloneqq \pi(\alpha) \in H^i(F, \mu_{p^k}^{\otimes i - 1})$. By the induction hypothesis, there exists a splitting field~$L/F$ of~$\beta$ such that~$[L: F] = \operatorname{ind}(\beta) = p^a$ for some non-negative integer~$a$ (take~$a = 0$ if~$\beta \sim 0$). Then~$\pi_L(\alpha_L) = 0$. 
  By the exactness, there exists a preimage~$\gamma \in H^i(L, \mu_p^{\otimes i-1})$ of~$\alpha_L$ under~$\iota$. Now by the base case, there exists a splitting field~$K/L$ of~$\gamma$ such that~$[K: L] = \operatorname{ind}(\gamma) = p^b$ for some non-negative integer~$b$ (take~$b = 0$ if~$\alpha_L$ is already trivial). Then~$\alpha_K = \iota_K(\gamma_K) \sim \iota_K(0) = 0$. Hence, $\ind(\alpha) \mid [K: F] = p^{a+b}$.
\end{proof}

The following two lemmas are known. We give the proof here for the convenience of the reader.

\begin{lemma}\label{composite_index}
  Let~$m~$ be a positive integer. Let~$\alpha \in H^i(F, \mu_m^{\otimes i - 1})$. Let~$L$ be a finite field extension over~$F$ of degree~$e$ prime to~$\ind(\alpha)$.  Then~$\operatorname{ind}(\alpha) = \operatorname{ind}(\alpha_L)$.
\end{lemma}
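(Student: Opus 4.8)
The plan is to prove the two divisibilities $\operatorname{ind}(\alpha_L)\mid\operatorname{ind}(\alpha)$ and $\operatorname{ind}(\alpha)\mid\operatorname{ind}(\alpha_L)$ separately, with Lemma~\ref{index} as the one non-formal ingredient. Note first that, under the standing assumption $\operatorname{char}(F)\nmid m$ and with $m=p^n$, we have $\operatorname{char}(F)\neq p$; since $\operatorname{char}(L)=\operatorname{char}(F)$, Lemma~\ref{index} applies over both $F$ and $L$, so both $\operatorname{ind}(\alpha)$ and $\operatorname{ind}(\alpha_L)$ are powers of $p$. Two further formal observations: by transitivity of restriction, any finite splitting field $K/L$ of $\alpha_L$ is automatically a finite splitting field of $\alpha$ over $F$, with $[K:F]=e\,[K:L]$; and for a finite splitting field $M/F$ of $\alpha$ one may assume $M/F$ separable, because the separable closure of $F$ in $M$ still splits $\alpha$ (restriction along a purely inseparable extension is an isomorphism on $H^i(-,\mu_m^{\otimes i-1})$ by topological invariance of the \'etale site, as $m$ is prime to the characteristic) and its degree over $F$ divides $[M:F]$.

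For $\operatorname{ind}(\alpha_L)\mid\operatorname{ind}(\alpha)$: let $M/F$ be a finite separable splitting field of $\alpha$. Then $M\otimes_F L$ is a finite \'etale $L$-algebra, hence a product $\prod_j N_j$ of finite separable field extensions $N_j/L$ with $\sum_j[N_j:L]=[M:F]$. Each $N_j$ receives an $F$-algebra map from $M$, so $\alpha_{N_j}=0$ and $N_j$ is a splitting field of $\alpha_L$; thus $\operatorname{ind}(\alpha_L)\mid[N_j:L]$ for every $j$, and therefore $\operatorname{ind}(\alpha_L)$ divides $\sum_j[N_j:L]=[M:F]$. Taking the greatest common divisor over all such $M$ gives $\operatorname{ind}(\alpha_L)\mid\operatorname{ind}(\alpha)$.

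For $\operatorname{ind}(\alpha)\mid\operatorname{ind}(\alpha_L)$: for every finite splitting field $K/L$ of $\alpha_L$, the field $K$ is a splitting field of $\alpha$ over $F$ with $[K:F]=e\,[K:L]$, so $\operatorname{ind}(\alpha)\mid e\,[K:L]$. Taking the greatest common divisor over all such $K$ yields $\operatorname{ind}(\alpha)\mid e\cdot\operatorname{ind}(\alpha_L)$. Since $\operatorname{ind}(\alpha)$ is a power of $p$ by Lemma~\ref{index} while $e$ is prime to $m=p^n$ and hence to $p$, it follows that $\operatorname{ind}(\alpha)\mid\operatorname{ind}(\alpha_L)$. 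Combining the two divisibilities gives $\operatorname{ind}(\alpha)=\operatorname{ind}(\alpha_L)$.

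I do not expect a serious obstacle here: the argument is essentially formal once Lemma~\ref{index} is in hand, and the coprimality hypothesis $\gcd(e,m)=1$ is used only in the last step, where it allows the $p$-power bound on $\operatorname{ind}(\alpha)$ to absorb the factor $e$. The only point that needs to be handled with a little care is the reduction to separable splitting fields of $\alpha$ in the first divisibility, which is what makes $M\otimes_F L$ reduced so that the dimension bookkeeping $\sum_j[N_j:L]=[M:F]$ is valid.
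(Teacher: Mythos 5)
Your proof is correct, and for the divisibility $\operatorname{ind}(\alpha_L)\mid\operatorname{ind}(\alpha)$ it takes a genuinely different route than the paper. The paper's argument starts by choosing a splitting field $F'/F$ with $[F':F]=\operatorname{ind}(\alpha)$ and takes the compositum $L'=F'L$, using coprimality of $\operatorname{ind}(\alpha)$ and $e$ to compute $[L':L]=\operatorname{ind}(\alpha)$. That approach tacitly assumes the gcd defining $\operatorname{ind}(\alpha)$ is actually \emph{realized} by some splitting field, which for prime-power coefficients is a nontrivial input (one has to trace it back through Lemma~\ref{index} and \cite[Lemma 2.3]{HHK22}; the statement of Lemma~\ref{index} as quoted only gives a splitting field of \emph{some} $p$-power degree, not of degree exactly $\operatorname{ind}(\alpha)$). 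Your argument sidesteps this entirely: you bound $\operatorname{ind}(\alpha_L)$ against the degree $[M:F]$ of an \emph{arbitrary} finite separable splitting field $M$ of $\alpha$, by decomposing the \'etale $L$-algebra $M\otimes_F L=\prod_j N_j$, observing that each factor $N_j$ splits $\alpha_L$, and using $\sum_j[N_j:L]=[M:F]$. That divisibility for every separable $M$ (together with the standard reduction from arbitrary $M$ to its separable closure, which is valid by invariance of Galois cohomology under purely inseparable extension) gives $\operatorname{ind}(\alpha_L)\mid\operatorname{ind}(\alpha)$ directly from the gcd definition, with no need for a realizing field. The second divisibility $\operatorname{ind}(\alpha)\mid\operatorname{ind}(\alpha_L)$ is essentially identical in both proofs: any splitting field of $\alpha_L$ splits $\alpha$, giving $\operatorname{ind}(\alpha)\mid e\cdot\operatorname{ind}(\alpha_L)$, and coprimality of $e$ with $p$ (via Lemma~\ref{index}) finishes. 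Net assessment: your version is more elementary in its use of the definitions and is arguably more robust, since it only needs the \emph{conclusion} of Lemma~\ref{index} (that $\operatorname{ind}(\alpha)$ is a $p$-power) rather than the stronger realizability statement.
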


\begin{proof}
  Let~$F'/F$ be a splitting field of~$\alpha$ of degree~$\operatorname{ind}(\alpha)$. Let~$L'$ be the composite of~$F'$ and~$L$. Hence,~$\ind(\alpha_L) \mid [L': L] = \ind(\alpha)$ since~$\operatorname{ind}(\alpha)$ and~$[L: F]$ are coprime. Moreover, let~$K/L$ be a splitting field of~$\alpha_L$ of degree~$\operatorname{ind}(\alpha_L)$. Then~$K/F$ also splits~$\alpha$, so~$\operatorname{ind}(\alpha) \mid [K: F] = e\operatorname{ind}(\alpha_L)$. Since~$\operatorname{ind}(\alpha)$ is prime to~$e$, we must have~$\operatorname{ind}(\alpha) = \operatorname{ind}(\alpha_L)$.
\end{proof}

\begin{lemma}\label{res_cor}
  Let~$m$ be a positive integer and let~$L$ be a finite field extension over~$F$ of degree prime to~$\per(\alpha)$. Let~$\alpha \in H^i(F, \mu_m^{\otimes i - 1})$. Then~$\per(\alpha) = \per(\alpha_L)$. 
\end{lemma}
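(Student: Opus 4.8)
The plan is to run the standard restriction--corestriction argument, exactly as in the proof that the period of a Brauer class is unchanged under a field extension of coprime degree. First I would reduce to the case that $L/F$ is separable: if $\operatorname{char}(F)=p>0$, write $F\subseteq L_s\subseteq L$ with $L_s/F$ separable and $L/L_s$ purely inseparable, so $[L:L_s]$ is a power of $p$ and hence prime to $m$ by the standing hypothesis $\operatorname{char}(F)\nmid m$; restriction along the purely inseparable extension $L/L_s$ is an isomorphism on $H^i(-,\mu_m^{\otimes i-1})$, so $\per(\alpha_L)=\per(\alpha_{L_s})$, and $[L_s:F]$ divides $[L:F]$, hence is still prime to $m$. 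Thus we may assume $L/F$ is finite separable.

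Let $\operatorname{res}\colon H^i(F,\mu_m^{\otimes i-1})\to H^i(L,\mu_m^{\otimes i-1})$ denote base change and let $\operatorname{cor}$ be the corestriction associated to $L/F$, so that $\operatorname{cor}\circ\operatorname{res}=[L:F]\cdot\mathrm{id}$. One divisibility is immediate: $\per(\alpha)\cdot\alpha_L=(\per(\alpha)\cdot\alpha)_L=0$, so $\per(\alpha_L)\mid\per(\alpha)$. For the other, put $n\coloneqq\per(\alpha_L)$. Then $n\cdot\operatorname{res}(\alpha)=0$, and applying $\operatorname{cor}$ gives $n[L:F]\cdot\alpha=\operatorname{cor}(n\cdot\operatorname{res}(\alpha))=0$, so $\per(\alpha)\mid n[L:F]$. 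Since the coefficient module $\mu_m^{\otimes i-1}$ is killed by $m$, both $\per(\alpha)$ and $n$ divide $m$; as $\gcd([L:F],m)=1$ we get $\gcd(n[L:F],m)=\gcd(n,m)=n$, so $\per(\alpha)\mid n=\per(\alpha_L)$. Combining the two divisibilities yields $\per(\alpha)=\per(\alpha_L)$.

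I do not anticipate any genuine difficulty here: the only non-elementary ingredient is the existence of corestriction maps on Galois cohomology with an arbitrary finite coefficient module, together with the identity $\operatorname{cor}\circ\operatorname{res}=[L:F]$, which is classical (e.g.\ via the transfer for the finite \'etale morphism $\operatorname{Spec}L\to\operatorname{Spec}F$). The only point worth a sentence is the separability reduction, which is needed so that these transfer maps are available in the Galois-cohomological setting.
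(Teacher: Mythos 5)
Your proof is correct and follows essentially the same restriction--corestriction argument as the paper, which cites \cite[Proposition 4.2.12]{GS17} for the identity $\operatorname{cor}\circ\operatorname{res}=[L:F]$. The only difference is that you add an explicit preliminary reduction to the separable case; the paper skips this (the corestriction in the cited reference is in fact available for arbitrary finite extensions), but your extra care is harmless and the rest of the argument matches the paper's line by line.
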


\begin{proof} 
  
  Consider the composition of the restriction and the corestriction map:
  \begin{equation*}
    \varphi \colon H^i(F, \mu_m^{\otimes i - 1})  \overset{\textnormal{Res}}{\longrightarrow} H^i(L, \mu_m^{\otimes i - 1}) \overset{\textnormal{Cor}}{\longrightarrow} H^i(F, \mu_m^{\otimes i - 1})
  \end{equation*} 
  By \cite[Proposition 4.2.12]{GS17},~$\varphi(\alpha) = e\alpha$, where~$e \coloneqq [L: F]$ is coprime to~$m$. 

  Let~$m_1 \coloneqq \per(\alpha)$ and let~$m_2 \coloneqq \per(\alpha_L)$. First, since the map~$ \textnormal{Res}\colon m_1\alpha \sim 0 \mapsto m_1\alpha_L \sim 0 \in H^i(L, \mu_m^{\otimes i - 1})$,~$m_2$ must divide~$m_1$.  The fact that~$\varphi(m_2\alpha) = m_2 e\alpha = 0$ implies that~$m_1\mid m_2e$. Since~$e$ is coprime to~$m$, we conclude that~$m_1 \mid m_2$. Hence,~$\per(\alpha) = \per(\alpha_L)$.
\end{proof}

Now we show that the period-index problem is well-defined for all classes~$\alpha \in H^i(F, \mu_m^{\otimes i - 1})$ for any positive integer~$m$. 

\begin{proposition}\label{non-prime}
  Let~$m$ be a positive integer. Let~$F$ be a field and char$(F) \nmid m$. Suppose~$m = p_1^{e_1}p_2^{e_2}\cdots p_k^{e_k}$, where each~$p_i$ is prime and pairwisely distinct. Let~$\alpha \in H^i(F, \mu_m^{\otimes i - 1})$.
  Then~$\per(\alpha)$ and~$\operatorname{ind}(\alpha)$ have the same prime factors as~$m$. 
  Moreover,~$\per(\alpha) \mid \operatorname{ind}(\alpha) \mid \per(\alpha)^c$ for some positive integer~$c$.
\end{proposition}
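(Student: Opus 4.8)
The plan is to decompose everything according to the prime factorization of $m$ and reduce to the prime-power case handled by Lemma~\ref{index}. Write $m = p_1^{e_1}\cdots p_k^{e_k}$ and set $m_s \coloneqq p_s^{e_s}$. The Chinese Remainder Theorem for the Galois modules $\mu_m^{\otimes i-1}$ gives a canonical decomposition $H^i(F,\mu_m^{\otimes i-1}) \cong \bigoplus_{s=1}^k H^i(F,\mu_{m_s}^{\otimes i-1})$, compatible with arbitrary field extensions, and under this identification a class $\alpha$ corresponds to a tuple $(\alpha_1,\dots,\alpha_k)$ with $\alpha_s \in H^i(F,\mu_{m_s}^{\otimes i-1})$. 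The period is then multiplicative in the obvious sense: $\per(\alpha) = \prod_s \per(\alpha_s)$, and $\per(\alpha_s)$ is a power of $p_s$; hence the prime factors of $\per(\alpha)$ are exactly those $p_s$ for which $\alpha_s \neq 0$.

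First I would handle the index. For each $s$, Lemma~\ref{index} applies to $\alpha_s \in H^i(F,\mu_{m_s}^{\otimes i-1})$ (note $\operatorname{char}(F)\nmid m$ implies $\operatorname{char}(F)\neq p_s$), yielding a splitting field $L_s/F$ with $[L_s:F]$ a power of $p_s$, and moreover $\operatorname{ind}(\alpha_s)$ is a power of $p_s$ with $\operatorname{ind}(\alpha_s)\mid \per(\alpha_s)^{c_s}$ for some $c_s$. Taking a compositum $L = L_1\cdots L_k$ inside a fixed algebraic closure, $L$ splits every $\alpha_s$ and hence splits $\alpha$; since the degrees $[L_s:F]$ are powers of pairwise distinct primes they are coprime, so $[L:F] = \prod_s [L_s:F]$ divides $\prod_s \operatorname{ind}(\alpha_s)^{?}$—more precisely $\operatorname{ind}(\alpha)\mid \prod_s \operatorname{ind}(\alpha_s)$. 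For the reverse divisibility I would argue one prime at a time: given any finite splitting field $K/F$ of $\alpha$, it splits each $\alpha_s$, so $\operatorname{ind}(\alpha_s)\mid [K:F]$; and since $\operatorname{ind}(\alpha_s)$ is a power of $p_s$, it divides the $p_s$-part of $[K:F]$. Letting $K$ range over all finite splitting fields and taking $\gcd$, each $\operatorname{ind}(\alpha_s)$ divides $\operatorname{ind}(\alpha)$, and as the $\operatorname{ind}(\alpha_s)$ are coprime, $\prod_s\operatorname{ind}(\alpha_s)\mid \operatorname{ind}(\alpha)$. Combining, $\operatorname{ind}(\alpha) = \prod_s \operatorname{ind}(\alpha_s)$, which has the same prime factors as $\per(\alpha) = \prod_s \per(\alpha_s)$, provided one checks that $\alpha_s = 0$ iff $\alpha_s$ contributes nothing to either product—which is immediate.

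It remains to combine the inequalities $\per(\alpha_s)\mid \operatorname{ind}(\alpha_s)\mid \per(\alpha_s)^{c_s}$ from Lemma~\ref{index} (the left divisibility is standard: a splitting field of $\alpha_s$ of degree coprime to $p_s$ cannot exist nontrivially, or one uses restriction–corestriction as in Lemma~\ref{res_cor}). Multiplying over $s$ gives $\per(\alpha)\mid \operatorname{ind}(\alpha)$ directly, and setting $c \coloneqq \max_s c_s$ gives $\operatorname{ind}(\alpha) = \prod_s \operatorname{ind}(\alpha_s) \mid \prod_s \per(\alpha_s)^{c_s} \mid \prod_s \per(\alpha_s)^c = \per(\alpha)^c$, using once more that the $\per(\alpha_s)$ are powers of distinct primes so no cross terms appear. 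The main obstacle, and the only place requiring genuine care, is making the CRT decomposition of $H^i(F,\mu_m^{\otimes i-1})$ and its compatibility with the period and index precise—in particular verifying that a field extension splits $\alpha$ if and only if it splits all the $p_s$-primary components $\alpha_s$, which rests on the functoriality of the decomposition; everything after that is bookkeeping with coprime integers.
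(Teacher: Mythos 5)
Your proposal takes essentially the same route as the paper: decompose $\alpha = (\alpha_1,\dots,\alpha_k)$ via the isomorphism $H^i(F,\mu_m^{\otimes i-1})\cong\prod_s H^i(F,\mu_{p_s^{e_s}}^{\otimes i-1})$ and reduce to the prime-power case in Lemma~\ref{index}. The one structural difference is that you construct splitting fields $L_s/F$ for each $\alpha_s$ over the \emph{same} base $F$ and take a compositum, whereas the paper builds a tower $F=F_0\subset F_1\subset\cdots\subset F_k$ with $F_{s+1}$ a splitting field of $\alpha_{s+1,F_s}$. Your compositum route is a bit cleaner: because all the $L_s$ sit over $F$, you do not need Lemma~\ref{composite_index} or Lemma~\ref{res_cor} (which the paper invokes to see that $\operatorname{ind}(\alpha_{s+1,F_s})=\operatorname{ind}(\alpha_{s+1})$ and $\per(\alpha_{s+1,F_s})=\per(\alpha_{s+1})$ after climbing the tower); instead the degree identity $[L_1\cdots L_k:F]=\prod_s[L_s:F]$ follows from the pairwise coprimality of the degrees. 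You also supply the reverse divisibility $\prod_s\operatorname{ind}(\alpha_s)\mid\operatorname{ind}(\alpha)$, which the paper does not state, yielding the cleaner equality $\operatorname{ind}(\alpha)=\prod_s\operatorname{ind}(\alpha_s)$. One small point, shared with the paper's proof and worth making explicit: to go from a splitting field $L$ of degree $\prod_s[L_s:F]$ to $\operatorname{ind}(\alpha)\mid\prod_s\operatorname{ind}(\alpha_s)$, one needs the $L_s$ to realize the index exactly, i.e.\ $[L_s:F]=\operatorname{ind}(\alpha_s)$. Lemma~\ref{index} as \emph{stated} only gives a splitting field of some $p_s$-power degree, but its inductive proof implicitly maintains (and the paper's proof of this proposition explicitly uses) the stronger property that a splitting field of degree exactly $\operatorname{ind}(\alpha_s)$ exists; your argument depends on this too, and it would be worth saying so.
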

\begin{proof}
  Since~$\mu_m$ is abelian,~$\mu_m \cong \mu_{p_1^{e_1}} \times \cdots \times \mu_{p_n^{e_n}}$. Therefore, we obtain the following isomorphism of Galois cohomology groups:
  \begin{align*} 
    h: H^i(F, \mu_m^{\otimes i - 1}) &\overset{\sim}{\longrightarrow} H^i(F, \mu_{p_1^{e_1}}^{\otimes i - 1}) \times \cdots \times H^i(F, \mu_{p_k^{e_k}}^{\otimes i - 1}),\\
    \alpha &\longmapsto (\alpha_1, \cdots, \alpha_k)
  \end{align*}where~$\alpha_s \in H^i(F, \mu_{p_s^{e_s}}^{\otimes i - 1})$. 
  Thus, it suffices to study the splitting behavior for each~$\alpha_s$ with respect to each prime factor~$p_s$. 
  Let~$F_1/F$ be a splitting field of~$\alpha_1$ such that~$[F_1: F] = \operatorname{ind}(\alpha_1)$. Note that such an~$F_1$ exists by Lemma~\ref{index}. Then~$h(\alpha_{F_1}) = (0, \alpha_{2, F_1}, \cdots, \alpha_{k, F_1})$.
  We may repeat this process and inductively construct splitting fields~$F_{s+1}/F_s$ for ~$\alpha_{s+1, F_s}$ with degree~$[F_{s+1}: F_s] = \operatorname{ind}(\alpha_{s+1, F_s}) = \operatorname{ind}(\alpha_{s+1})$  (see Lemma~\ref{composite_index}) for each~$s$. Therefore, we may finally construct~$F_k/F$ that splits~$\alpha$ and~$[F_k: F] = \prod_{s = 1}^k \operatorname{ind}(\alpha_{s}) = p_1^{a_1} \cdots p_k^{a_k}$ for non-negative integers~$a_s$ by Lemma~\ref{index} ($a_s = 0$ if~$\alpha_s \sim 0$).

  Let~$\per(\alpha) = p_1^{b_1} \cdots p_k^{b_k}$, where~$b_s \leq e_s$ for each~$s$. Then~$\per(\alpha_s) = p_s^{b_s}$ for each component~$\alpha_s \in h(\alpha)$. By Lemma~\ref{res_cor},~$\per(\alpha_{s, F_{s-1}})$ is also~$p_s^{b_s}$, and hence~$\ind(\alpha_{s, F_{s-1}}) = [F_s: F_{s-1}] \mid \per(\alpha_s)^{c_s}$ for some integer~$c_s$. Therefore, 
  \begin{equation*}
    \ind(\alpha) \mid \prod_{s = 1}^k \ind(\alpha_{s, F_{s-1}}) \mid \prod_{s=1}^k \per(\alpha_s)^{c_s} \mid \per(\alpha)^c,
  \end{equation*} where~$c$ can be taken to be any integers greater than or equal to the maximum of~$c_1, \dots, c_k$.
\end{proof}

\subsection{A period-index bound for~$\cohom{F}{m}$}

As shown in the previous section, the period-index problem is well-defined for the Galois cohomology group~$H^i(F, \mu_m^{\otimes i-1})$. Accordingly, we may make the following definition. 

\begin{definition}
  Fix a degree~$i$. The \emph{i-splitting dimension at m of F} is the minimal exponent~$n$ of~$\per(\alpha)$, denoted by~$\operatorname{sd}_m^i(F)$, so that for all~$\alpha \in \cohom{F}{m}$, ~$\ind(\alpha)$ divides~$\per(\alpha)^n$. The \emph{stable i-splitting dimension at m of F}, denoted by ~$\operatorname{ssd}_m^i(F)$, is the minimal exponent~$n$ so that~$\operatorname{sd}_m^i(L) \leq n$ for all finite field extensions~$L/F$. 
\end{definition}

We will show that there also exists a bound for~$\operatorname{ssd}_m^i(F)$, when~$F$ is a Hensel semi-global field and char$(F)$ does not divide~$m$. We first study such a bound for~$m = l^n$, where~$l$ is a prime number and char$(F)$ is not~$l$.

\begin{lemma}\label{K-group}
  For any~$\alpha \in H^i(F, \mu_{l^n}^{\otimes i -1})$ with~$\per(\alpha) = l^s$, there exists~$\alpha' \in H^i(F, \mu_{l^s}^{\otimes i-1})$ that is in the preimage of~$\alpha$ under the natural map 
  \begin{equation*}
    \cdots \longrightarrow H^i(F, \mu_{l^s}^{\otimes i-1}) \longrightarrow H^i(F, \mu_{l^n}^{\otimes i-1}) \longrightarrow H^i(F, \mu_{l^{n-s}}^{\otimes i-1}) \longrightarrow \cdots
  \end{equation*} 
  induced by the short exact sequence
  \begin{equation*}
    1 \longrightarrow \mu_{l^s} \longrightarrow \mu_{l^n} \longrightarrow \mu_{l^{n-s}} \longrightarrow 1
  \end{equation*}
\end{lemma}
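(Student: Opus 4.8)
\textit{Proof proposal.}

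The plan is to read off the statement from the long exact cohomology sequence of the (twisted) short exact sequence. First, $0\to\mathbb{Z}/l^s\to\mathbb{Z}/l^n\to\mathbb{Z}/l^{n-s}\to 0$, with the maps ``multiplication by $l^{n-s}$'' and ``reduction'', is a short exact sequence of $\mathbb{Z}/l^n$-modules, and $\mu_{l^n}^{\otimes i-1}$ is free of rank one over $\mathbb{Z}/l^n$, so tensoring over $\mathbb{Z}/l^n$ is exact and produces the short exact sequence of Galois modules $1\to\mu_{l^s}^{\otimes i-1}\to\mu_{l^n}^{\otimes i-1}\to\mu_{l^{n-s}}^{\otimes i-1}\to 1$ of the statement. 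Its long exact sequence contains
\[
H^i(F,\mu_{l^s}^{\otimes i-1})\xrightarrow{\,\iota_*\,}H^i(F,\mu_{l^n}^{\otimes i-1})\xrightarrow{\,p_*\,}H^i(F,\mu_{l^{n-s}}^{\otimes i-1}),
\]
so by exactness $\alpha$ has a preimage $\alpha'\in H^i(F,\mu_{l^s}^{\otimes i-1})$ if and only if $p_*(\alpha)=0$; thus everything reduces to showing $p_*(\alpha)=0$.

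To bring in the hypothesis $\per(\alpha)=l^s$, let $j_*\colon H^i(F,\mu_{l^{n-s}}^{\otimes i-1})\to H^i(F,\mu_{l^n}^{\otimes i-1})$ be induced by the inclusion of $\mu_{l^{n-s}}$ as the $l^{n-s}$-torsion of $\mu_{l^n}$. On the underlying $\mathbb{Z}/l^\bullet$-modules the composite $j\circ p$ is $k\mapsto(k\bmod l^{n-s})\mapsto l^s k$, i.e.\ multiplication by $l^s$ on $\mathbb{Z}/l^n$; hence $j_*\circ p_*$ is multiplication by $l^s$ on $H^i(F,\mu_{l^n}^{\otimes i-1})$. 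Since $\per(\alpha)=l^s$ means $l^s\alpha=0$, this gives $j_*(p_*(\alpha))=0$, i.e.\ $p_*(\alpha)\in\ker j_*$. (The same computation shows $\ker p_*\subseteq H^i(F,\mu_{l^n}^{\otimes i-1})[l^s]$, so the content of the Lemma is precisely the reverse inclusion $H^i(F,\mu_{l^n}^{\otimes i-1})[l^s]\subseteq\operatorname{im}\iota_*$.)

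The remaining point, $p_*(\alpha)=0$, is equivalent to $\ker j_*=0$, i.e.\ to the vanishing of the connecting map $H^{i-1}(F,\mu_{l^s}^{\otimes i-1})\to H^i(F,\mu_{l^{n-s}}^{\otimes i-1})$ attached to the companion sequence $1\to\mu_{l^{n-s}}^{\otimes i-1}\to\mu_{l^n}^{\otimes i-1}\to\mu_{l^s}^{\otimes i-1}\to 1$; this is the step I expect to be the real obstacle, since inclusion-induced maps between $\mu_{l^\bullet}$-cohomology groups are not injective in general. I would attack it by induction on $n-s$, stripping off one factor of $l$ at a time via the sequences $1\to\mu_l\to\mu_{l^m}\to\mu_{l^{m-1}}\to 1$, reducing to the base case $n-s=1$; there the source $H^{i-1}(F,\mu_{l^s}^{\otimes i-1})$ of the obstruction lies in the norm-residue (Bloch--Kato) range and is generated by symbols, on which the Bockstein-type obstruction should be shown to vanish. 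For $i=2$ this last step is transparent, as $\iota_*$ is then the inclusion $\mathrm{Br}(F)[l^s]\hookrightarrow\mathrm{Br}(F)[l^n]$; for general $i$ one uses the Bloch--Kato theorem, together with the a priori bound that $p_*(\alpha)$ is annihilated by $l^{\min(s,\,n-s)}$, to close the induction.
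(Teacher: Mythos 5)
Your reduction is exactly the one the paper performs: you correctly observe that the composite $j_*\circ p_*$ (where $p_*=H^i(\pi_1)$ and $j_*=H^i(\iota)$ in the paper's notation) is multiplication by $l^s$, so $\per(\alpha)=l^s$ forces $p_*(\alpha)\in\ker j_*$, and you correctly reduce the Lemma to showing $\ker j_* = 0$, equivalently that the connecting map $\partial\colon H^{i-1}(F,\mu_{l^s}^{\otimes i-1})\to H^{i}(F,\mu_{l^{n-s}}^{\otimes i-1})$ vanishes. But you stop there: the step you flag as ``the real obstacle'' is precisely the one you do not prove. Saying the Bockstein-type obstruction ``should be shown to vanish'' on symbols is not a proof, and the induction on $n-s$ you sketch is both undeveloped (the maps between the successive short exact sequences are not written down, and it is not clear the base-case Bockstein argument you gesture at even closes the induction) and, more to the point, unnecessary.

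The paper finishes in one clean step, with no induction. By exactness of the long exact sequence for $1\to\mu_{l^{n-s}}\to\mu_{l^n}\xrightarrow{\pi_2}\mu_{l^s}\to 1$, the connecting map $\partial$ vanishes iff $H^{i-1}(\pi_2)\colon H^{i-1}(F,\mu_{l^n}^{\otimes i-1})\to H^{i-1}(F,\mu_{l^s}^{\otimes i-1})$ is surjective. Here Bloch--Kato is applied in degree $i-1$ with the matching twist $i-1$: $H^{i-1}(F,\mu_{l^s}^{\otimes i-1})\cong K^M_{i-1}(F)/l^s$ is generated by symbols $(b_1)\cup\cdots\cup(b_{i-1})$ with $(b_j)\in H^1(F,\mu_{l^s})\cong F^\times/(F^\times)^{l^s}$. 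Each factor lifts along the visibly surjective map $F^\times/(F^\times)^{l^n}\twoheadrightarrow F^\times/(F^\times)^{l^s}$, and cup products are compatible with the coefficient map $\pi_2$, so every symbol (hence every class) lifts. Thus $H^{i-1}(\pi_2)$ is surjective, $\partial=0$, $H^i(\iota)$ is injective, and $p_*(\alpha)=0$. Your instinct that ``inclusion-induced maps between $\mu_{l^\bullet}$-cohomology are not injective in general'' actually overestimates the difficulty here: for this particular coefficient sequence the injectivity of $H^i(\iota)$ is an unconditional consequence of Bloch--Kato one degree down. If you replace your induction-plus-Bockstein sketch with this surjectivity argument, your proof coincides with the paper's.
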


\begin{proof}
  Consider the following short exact sequences of groups:
\begin{center}
  \begin{tikzcd}
      & &1 \arrow{d}{}& & &\\
      & &\mu_{l^{s}}\arrow{d}{} & & &\\
      & &\mu_{l^{n}}\arrow{dr}{s}\arrow{d}{\pi_1} & & &\\
     &1 \arrow{r}{} &\mu_{l^{n-s}} \arrow{d}{}\arrow{r}{\iota}& \mu_{l^n} \arrow{r}{\pi_2} & \mu_{l^{s}} \arrow{r}{}& 1\\
     & &1 & &  &
  \end{tikzcd}
\end{center} Here the diagonal map is given by~$s \colon x \mapsto x^{l^s}$. This induces the following diagram on~$i$-th degree Galois cohomology groups:
\begin{center}
  \begin{tikzcd}
     & & &\vdots \arrow{d}{}& & \\
     & & &H^i(F, \mu_{l^{s}}^{\otimes i-1})\arrow{d}{} & & \\
     & & &H^i(F, \mu_{l^{n}}^{\otimes i-1})\arrow{dr}{H^i(s)}\arrow{d}{H^i(\pi_1)} & & \\
     &H^{i - 1}(F, \mu_{l^{n}}^{\otimes i - 1})\arrow{r}{H^{i-1}(\pi_2)} &H^{i - 1}(F, \mu_{l^{s}}^{\otimes i - 1}) \arrow{r}{\partial}  &H^i(F, \mu_{l^{n-s}}^{\otimes i-1}) \arrow{d}{}\arrow{r}{H^i(\iota)}& H^i(F, \mu_{l^{n}}^{\otimes i-1}) \arrow{r}{H^i(\pi_2)} & \cdots \\
     & & &\vdots & &  
  \end{tikzcd}
\end{center} To prove the statement, it suffices to show that~$H^i(\iota)$ is injective. Indeed, since~$\per(\alpha) = l^s$, we must have~$H^i(s)\colon \alpha \mapsto 0$, and if~$H^i(\iota)$ is injective,~$H^i(\pi_1)\colon \alpha \mapsto 0$. By exactness, this implies that there exists a preimage~$\alpha' \in H^i(F, \mu_{l^{s}}^{\otimes i-1})$ that maps to~$\alpha$. This proves the assertion.

Therefore, it suffices to show that~$\ker(H^i(\iota)) = \coker(H^{i - 1}(\pi_2))$ is trivial. That is, we show that~$H^{i - 1}(F, \mu_{l^{n}}^{\otimes i - 1}) \rightarrow H^{i - 1}(F, \mu_{l^{s}}^{\otimes i - 1})$ is surjective. By the Norm-residue isomorphism theorem (\cite{Voevodsky}),~$H^{i - 1}(F, \mu_{l^s}^{\otimes i - 1}) \cong K_{i - 1}^{M}(F)/(l^s)$, where~$K_{i-1}^M$ denotes the Milnor-$K$-group. Then for~$\beta \in H^{i - 1}(F, \mu_{l^s}^{\otimes i - 1})$, we may write~$\beta = \sum_{j = 1}^n (\beta_{j1}) \cup \cdots \cup (\beta_{j(i-1)})$, where~$\beta_{jk} \in H^1(F, \mu_{l^s})$. It suffices to show that~$H^1(F, \mu_{l^n}) \rightarrow H^1(F, \mu_{l^s})$ is surjective. By Kummer theory (see for example, \cite[Proposition 4.3.6]{GS17}), this is equivalent to showing that~$F^{\times}/(F^{\times})^{l^n} \rightarrow F^{\times}/(F^{\times})^{l^s}$ is surjective. This is obvious.  
\end{proof}

\begin{proposition}\label{period-index_non_prime} Let~$F$ be a field,~$l$ a prime that is not the characteristic of~$F$. Then~$\textnormal{ssd}_{l^n}^i(F) \leq \textnormal{ssd}_{l}^i(F)$. 
\end{proposition}

\begin{proof}
  We prove the statement by induction, using a similar argument as in the proof of Lemma~\ref{index}. We have shown the statement for~$l = 1$. Suppose the statement holds for all~$n \leq k$. Then for~$n = k + 1$, take~$\alpha \in H^i(F, \mu_{l^{k + 1}}^{\otimes i - 1})$. Suppose~$\per(\alpha) = l^s$ for some~$s < k + 1$, then by Lemma~\ref{K-group}, in the following exact sequence
  \begin{equation*}
    \cdots \longrightarrow H^i(F, \mu_{l^s}^{\otimes i - 1}) \overset{i}{\longrightarrow} H^i(F, \mu_{l^{k+1}}^{\otimes i-1}) \overset{\pi}{\longrightarrow} H^i(F, \mu_{l^{k+1-s}}^{\otimes i-1}) \longrightarrow \cdots,
  \end{equation*} there exists~$\alpha' \in H^i(F, \mu_{l^s}^{\otimes i-1})$ such that~$\alpha' \mapsto \alpha$. Note that~$\per(\alpha') = l^s$ (otherwise~$\per(\alpha)$ must be smaller). By the induction hypothesis,~$\alpha'$ is split by a finite field extension over~$F$ with degree at most~$(l^s)^{\textnormal{ssd}_l^i(F)}$. Then so is~$\alpha$.

  Now suppose~$\per(\alpha) = l^{k+1}$. Let~$\beta\coloneqq \pi(\alpha) \in H^i(F, \mu_{l^{k+1 - s}}^{\otimes i - 1})$. Note that~$\per(\beta) = l^{k+1-s}$. Indeed, if~$\per(\beta) = l^{k+1-s-j}$, for some~$j > 0$, there must exists a preimage~$\gamma$ of~$l^{k+1-s-j}\alpha$ under the natural map~$i$ by Lemma~\ref{K-group}. However,~$\per(\gamma) \leq l^s$, then so is~$\per(l^{k+1-s-j}\alpha)$. Therefore,~$j = 0$. Then there exists a finite field extension~$L/F$ of degree less that~$(l^{k+1-s})^{\textnormal{ssd}_l^i(F)}$ that splits~$\beta$,~$\beta_L = 0$. Then there exists a preimage~$\alpha'' \in H^i(L, \mu_{l^s}^{\otimes i-1})$ of~$\alpha_L$ under~$i$. Note that~$\per(\alpha'') \mid l^s$. Similarly,~$\alpha''$ is split by a finite field extension~$K$ over~$L$ with degree at most~$(l^s)^{\textnormal{ssd}_l^i(L)}$. Then so is~$\alpha_L$. 
  Hence,~$\alpha$ is split by~$K$, and 
  \begin{equation*}
    [K: F] \leq (l^s)^{\textnormal{ssd}_l^i(L)} \cdot (l^{k+1-s})^{\textnormal{ssd}_l^i(F)} \leq (l^s)^{\textnormal{ssd}_l^i(F)} \cdot (l^{k+1-s})^{\textnormal{ssd}_l^i(F)} = (l^{k+1})^{\textnormal{ssd}_l^i(F)}.
  \end{equation*} We may also repeat the above process for any finite field extension~$E/F$. Therefore, we have shown that~$\textnormal{ssd}_{l^n}^i(F) \leq \textnormal{ssd}_{l}^i(F)$.
\end{proof}

\begin{theorem}\label{general_theorem}
  Let~$m = l_1^{e_1}\cdots l_k^{e_k}$, where each~$l_s$ is a prime number and pairwisely distinct. Assume~$\operatorname{char}(F) \nmid m$. Then~$\operatorname{ssd}_m^i(F) \leq \max_{1 \leq s \leq k} \{\operatorname{ssd}_{l_s}^i(F)\}$.
\end{theorem}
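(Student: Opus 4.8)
The plan is to assemble this theorem from the prime-power bound of Proposition~\ref{period-index_non_prime} together with the Chinese-remainder mechanism already exploited in the proof of Proposition~\ref{non-prime}. Set $N \coloneqq \max_{1 \le s \le k} \ssd_{l_s}^i(F)$. By the very definition of $\ssd_m^i(F)$ it suffices to prove: for every finite field extension $E/F$ and every $\alpha \in \cohom{E}{m}$, one has $\ind(\alpha) \mid \per(\alpha)^N$.

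First I would record the decomposition. Since $\mu_m \cong \mu_{l_1^{e_1}} \times \cdots \times \mu_{l_k^{e_k}}$ as Galois modules, twisting and passing to cohomology yields a base-change-compatible isomorphism $h \colon \cohom{E}{m} \xrightarrow{\sim} \prod_{s=1}^{k} \cohom{E}{l_s^{e_s}}$; write $\alpha \leftrightarrow (\alpha_1, \dots, \alpha_k)$. As $n\alpha = 0$ holds exactly when $\per(\alpha_s) \mid n$ for all $s$, and the integers $\per(\alpha_s)$ are powers of the pairwise distinct primes $l_s$, it follows that $\per(\alpha) = \prod_{s=1}^k \per(\alpha_s)$, and in particular $\per(\alpha)^N = \prod_{s=1}^k \per(\alpha_s)^N$.

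Next I would bound $\ind(\alpha)$ by a splitting tower built exactly as in the proof of Proposition~\ref{non-prime}. Put $E_0 \coloneqq E$, and define $E_s$ inductively: since $[E_{s-1}:E]$ is a product of powers of $l_1, \dots, l_{s-1}$ it is prime to $l_s$, so Lemma~\ref{composite_index} gives $\ind(\alpha_{s,E_{s-1}}) = \ind(\alpha_s)$, while Lemma~\ref{index} supplies a splitting field $E_s/E_{s-1}$ of $\alpha_{s,E_{s-1}}$ with $[E_s:E_{s-1}] = \ind(\alpha_{s,E_{s-1}}) = \ind(\alpha_s)$, itself a power of $l_s$; the classes $\alpha_1, \dots, \alpha_{s-1}$ stay split over $E_s$. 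At $s = k$ the field $L \coloneqq E_k$ splits every $\alpha_s$, hence splits $\alpha$, and $[L:E] = \prod_{s=1}^k \ind(\alpha_s)$, so $\ind(\alpha) \mid \prod_{s=1}^k \ind(\alpha_s)$.

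Finally I would feed in the prime-power case. By Proposition~\ref{period-index_non_prime}, $\ssd_{l_s^{e_s}}^i(F) \le \ssd_{l_s}^i(F) \le N$; applied to the finite extension $E_{s-1}/F$ and to the class $\alpha_{s,E_{s-1}}$ this gives $\ind(\alpha_{s,E_{s-1}}) \mid \per(\alpha_{s,E_{s-1}})^N$, while $\per(\alpha_{s,E_{s-1}}) = \per(\alpha_s)$ by Lemma~\ref{res_cor} (the degree $[E_{s-1}:E]$ being prime to $l_s$). Hence $\ind(\alpha_s) = \ind(\alpha_{s,E_{s-1}}) \mid \per(\alpha_s)^N$ for each $s$, and combining with the tower estimate, $\ind(\alpha) \mid \prod_{s=1}^k \per(\alpha_s)^N = \per(\alpha)^N$, as desired; feeding in the bound of Theorem~\ref{main_theorem} for $\ssd_{l_s}^i$ then yields the explicit numerical consequence for Hensel semi-global fields. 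The only point that needs attention — and the reason the tower is built in stages rather than all at once — is keeping every intermediate degree coprime to the prime currently being handled, so that Lemmas~\ref{composite_index} and~\ref{res_cor} apply; once that bookkeeping is in place the theorem is a routine assembly of results already in hand, so I do not anticipate a genuine obstacle.
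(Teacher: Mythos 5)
Your proposal is correct and follows essentially the same route as the paper: decompose $\mu_m$ into its primary factors, build a tower of splitting fields one prime at a time, use Lemma~\ref{res_cor} to keep the periods stable along the tower, and invoke Proposition~\ref{period-index_non_prime} at each stage. The only cosmetic difference is that you invoke Lemma~\ref{composite_index} explicitly to track the indices $\ind(\alpha_{s,E_{s-1}}) = \ind(\alpha_s)$, whereas the paper bounds the degree of each step directly via $\ssd_{l_s}^i(F_{s-1}) \le \ssd_{l_s}^i(F)$; both amount to the same bookkeeping.
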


\begin{proof}
  We may argue as in Proposition~\ref{non-prime}. Since~$\mu_m$ is abelian,~$\mu_m \cong \mu_{l_1^{e_1}} \times \cdots \times \mu_{l_k^{e_k}}$. Therefore, 
  \begin{align*}
    h: H^i(F, \mu_m^{\otimes i - 1}) &\overset{\sim}{\longrightarrow} H^i(F, \mu_{l_1^{e_1}}^{\otimes i - 1}) \times \cdots \times H^i(F, \mu_{l_k^{e_k}}^{\otimes i - 1}),\\
    \alpha &\longmapsto (\alpha_1, \cdots, \alpha_k)
  \end{align*}where~$\alpha_s \in H^i(F, \mu_{l_s^{e_s}}^{\otimes i - 1})$. 
  
  First, by Proposition~\ref{period-index_non_prime}, we may construct~$F_1/F$ of degree~$\per(\alpha_1)^{\textnormal{ssd}_{l_1}(F)}$ that splits~$\alpha_1$. Therefore ~$h(\alpha_{F_1}) = (0, \alpha_{2, F_1}, \cdots, \alpha_{k, F_1})$. We may repeat this process and inductively construct a finite field extension~$F_{s+1}/F_s$ that splits~$\alpha_{s+1, F_s}$ with degree at most~$[F_{s+1}: F_s] = \per(\alpha_{s+1, F_s})^{\textnormal{ssd}_{l_s}^i(F_s)}$. Also, by Lemma~\ref{res_cor}, we see that~$\per(\alpha_{s+1, F_s}) = \per(\alpha_{s+1})$ since~$[F_s: F]$ is coprime to~$l_{s+1}$. Therefore, we may finally construct~$F_k/F$ that splits~$\alpha$ and 
  \begin{align*}
    [F_k: F] &= \prod_{s = 1}^k \per(\alpha_{s})^{\textnormal{ssd}_{l_s}^i(F_{s-1})} \leq \prod_{s = 1}^k \per(\alpha_{s})^{\textnormal{ssd}_{l_s}^i(F)} \leq \prod_{i = 1}^k \per(\alpha_{s})^{\max_{1 \leq s \leq k} \{\operatorname{ssd}_{l_s}^i(F)\}} 
  \end{align*} Since~$\per(\alpha) = \per(h(\alpha))$ and~$\per(\alpha_j) \mid l_s^{e_s}$, we must have~$\per(\alpha) = \per(\alpha_1) \cdots \per(\alpha_k)$. Therefore,~$[F_k: F] \leq \per(\alpha)^{\max_{1 \leq s \leq k} \{\operatorname{ssd}_{l_s}^i(F)\}}$, whence~$\operatorname{sd}_m^i(F) \leq \max_{1 \leq s \leq k} \{\operatorname{ssd}_{l_s}^i(F)\}$.

  To prove the assertion for the stable splitting dimension, we also need to consider all finite field extensions~$E/F$. We may repeat the argument above and show that~$\operatorname{sd}_m^i(E) \leq \ssd_{l_s}^i(E) \leq \ssd_{l_s}^i(F)$ by \cite[Proposition 2.7]{HHK23}.
\end{proof}

\begin{remark}
    Note that Theorem~\ref{general_theorem} is a relative bound, only depending on the bound for the stable splitting dimension at the prime factors of~$m$. This bound holds for arbitrary fields.
\end{remark}

\subsection{Example: Complex threefolds} In \cite{deJong}, de Jong proved that~$\per(\alpha)$ equals~$\ind(\alpha)$ for any~$\alpha$ in the Brauer group of the function field of an algebraic surface over an algebraically closed field.
In this subsection, we give an example of the period-index bound of~$H^3(F, \mu_m^{\otimes 2})$ for certain 3-dimensional fields~$F$ over algebraically closed fields.  
\begin{corollary}
    Let~$k$ be the function field of an algebraic curve over an algebraically closed field and let~$K$ be the field of algebraic Laurent series over~$k$. Let~$F$ be a one-variable function field over~$K$. Then 
    \begin{equation*}
        \ssd_m^3(F) \leq 
        \begin{cases}
            2, \quad m \textnormal{ is odd},\\
            3, \quad m \textnormal{ is even}.
        \end{cases}
    \end{equation*} If we further assume that~$m$ is prime, the same assertion also holds for the generalized stable splitting dimension. 
\end{corollary}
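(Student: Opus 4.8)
The plan is to obtain the corollary as a direct consequence of Theorem~\ref{main_theorem} and Theorem~\ref{general_theorem}, the only real input being a computation of the (generalized) stable splitting dimensions of the residue data $k$ and $k(x)$, both of which turn out to vanish in degree $3$. First I would check the hypotheses: the field $K$ of algebraic Laurent series over $k$ is an excellent henselian discretely valued field with residue field $k$ --- its valuation ring is the henselization of $k[t]_{(t)}$, which is excellent because $k[t]$ is of finite type over the field $k$ and excellence is preserved under localization and under henselization of local rings. Since $K$ is equicharacteristic, $\operatorname{char}(F)=\operatorname{char}(K)=\operatorname{char}(k)$, and the standing hypothesis $\operatorname{char}(F)\nmid m$ of Section~\ref{sec_non_prime} guarantees that every prime divisor $l$ of $m$ is distinct from $\operatorname{char}(k)$. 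As $F$ is a one-variable function field over $K$, it is a Hensel semi-global field and the hypotheses of Theorem~\ref{main_theorem} are met for each such $l$.

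Next I would show $\ssd_l^3(k)=\gssd_l^3(k)=\ssd_l^3(k(x))=\gssd_l^3(k(x))=0$ for every prime $l\neq\operatorname{char}(k)$. Write $k=k_0(C)$ with $k_0$ algebraically closed; then $k_0$ has $l$-cohomological dimension $0$, so by the standard transcendence-degree bound (see \cite{serre97}) every finite extension $L/k$ satisfies $\cd_l(L)\le 1$ and every finite extension $L'/k(x)$ satisfies $\cd_l(L')\le 2$, since $k(x)$ is finitely generated over $k_0$ of transcendence degree $2$. In particular $H^3(L,\mu_l^{\otimes 2})=0$ for every finite extension $L$ of $k$ or of $k(x)$, so the index of any finite subset of such a group equals $1$, and the four invariants above are $0$.

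Assembling the bound: feeding these vanishing values into Theorem~\ref{main_theorem} gives, for every prime $l\neq\operatorname{char}(k)$,
\[
\ssd_l^3(F)\le 2,\ \ \gssd_l^3(F)\le 2 \quad (l\neq 2), \qquad \ssd_2^3(F)\le 3,\ \ \gssd_2^3(F)\le 3 .
\]
When $m$ is prime this is precisely the claim, including the generalized version. For a general $m=l_1^{e_1}\cdots l_k^{e_k}$ with $\operatorname{char}(F)\nmid m$, Theorem~\ref{general_theorem} gives $\ssd_m^3(F)\le \max_{1\le s\le k}\ssd_{l_s}^3(F)$; if $m$ is odd all the $l_s$ are odd and this maximum is $\le 2$, while if $m$ is even the factor $l_s=2$ contributes $3$ and all other factors contribute $2$, so the maximum is $\le 3$. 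This yields the stated bound on $\ssd_m^3(F)$; no generalized statement is claimed for composite $m$ because Theorem~\ref{general_theorem} is formulated only for $\ssd$.

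The approach encounters no essential obstacle, since all the substance is already packaged in Theorems~\ref{main_theorem} and~\ref{general_theorem}. The two points that require care are (i) verifying that the field of algebraic Laurent series over $k$ really is an excellent henselian discretely valued field, so that the machinery of Section~\ref{patching_henselian} applies to $F$, and (ii) the vanishing $H^3(L,\mu_l^{\otimes 2})=0$ for all finite extensions $L$ of $k$ and of $k(x)$ --- this is the one place where the hypothesis that the constant field $k_0$ is algebraically closed (hence of $l$-cohomological dimension $0$) is used, and it is what makes the residue contributions $\ssd_l^3(k)$ and $\ssd_l^3(k(x))$ drop to zero.
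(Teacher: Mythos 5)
Your proof is correct and follows essentially the same route as the paper: reduce to prime $l$ via Theorem~\ref{general_theorem}, compute that the residue data contribute $\ssd_l^3(k)=\ssd_l^3(k(x))=0$ (and likewise for $\gssd$) from the vanishing of $l$-cohomological dimension over the algebraically closed constant field, and feed this into the relative bound. Your version is in fact slightly more careful than the paper's wording: the paper's proof asserts that the cohomological dimensions of $K$ and $K(x)$ are $1$ and $2$, but $\cd_l(K)=2$ since $K$ is henselian discretely valued with residue field $k$ of $\cd_l=1$; the quantities that actually enter the bound are $\cd_l(k)=1$ and $\cd_l(k(x))=2$, which is exactly what you compute. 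You also cite Theorem~\ref{main_theorem} directly rather than the iterative variant the paper invokes, which is the cleaner and more natural reference for a general one-variable function field over $K$; the two are equivalent in the case $r=1$ that is needed here.
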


\begin{proof}
    Since the cohomological dimension of~$K$ and~$F$ are 1 and 2 respectively,~$\gssd_l^3(K) = \gssd_l^3(F) = 0$ for any prime number~$l$. Therefore, by Theorem~\ref{iterative_theorem},~$\gssd_l^3(F) \leq 2$ if~$l \neq 2$ and~$\gssd_2^3(F) = 3$. By a similar argument, we may also show the same assertion for~$\ssd_l^3(F)$.

    By Theorem~\ref{general_theorem}, 
    \begin{equation*}
        \ssd_m^3(F) \leq \max\{\ssd_{l_s}^3(F)\} \leq
        \begin{cases}
            2, \quad 2 \nmid m,\\
            3, \quad 2 \mid m.
        \end{cases}
    \end{equation*}
\end{proof}

\bibliographystyle{alpha}
\newcommand{\etalchar}[1]{$^{#1}$}

\noindent \textbf{Author information}:

\medskip

\noindent Yidi Wang: Department of Mathematics, University of Western Ontario, London, Ontario, N6H 0B4, Canada.\\ Email: {\tt ywan6443@uwo.ca}

\end{document}